\begin{document}
\title{Some Hopf algebras of dimension $72$ without the Chevalley property}
\author{Naihong Hu\thanks{Email:\,nhhu@math.ecnu.edu.cn}\\{\small Department of Mathematics, East China Normal University,
Shanghai 200062, China}
\and Rongchuan Xiong\thanks{Email:\,52150601006@ecnu.cn}
\\{\small Department of Mathematics, East China Normal University,
Shanghai 200062, China}
}

\maketitle

\newtheorem{question}{Question}
\newtheorem{defi}{Definition}[section]
\newtheorem{conj}{Conjecture}
\newtheorem{thm}[defi]{Theorem}
\newtheorem{lem}[defi]{Lemma}
\newtheorem{pro}[defi]{Proposition}
\newtheorem{cor}[defi]{Corollary}
\newtheorem{rmk}[defi]{Remark}
\newtheorem{Example}{Example}[section]

\newcommand{\C}{\mathcal{C}}
\newcommand{\D}{\mathcal{D}}
\newcommand{\A}{\mathcal{A}}
\newcommand{\De}{\Delta}
\newcommand{\M}{\mathcal{M}}
\newcommand{\K}{\mathds{k}}
\newcommand{\E}{\mathcal{E}}
\newcommand{\Pp}{\mathcal{P}}
\newcommand{\Lam}{\lambda}
\newcommand{\As}{^{\ast}}
\newcommand{\Aa}{a^{\ast}}
\newcommand{\Ab}{(a^2)^{\ast}}
\newcommand{\Ac}{(a^3)^{\ast}}
\newcommand{\Ad}{(a^4)^{\ast}}
\newcommand{\Ae}{(a^5)^{\ast}}
\newcommand{\B}{b^{\ast}}
\newcommand{\BAa}{(ba)^{\ast}}
\newcommand{\BAb}{(ba^2)^{\ast}}
\newcommand{\BAc}{(ba^3)^{\ast}}
\newcommand{\BAd}{(ba^4)^{\ast}}
\newcommand{\BAe}{(ba^5)^{\ast}}

\newcommand{\CYD}{{}^{\C}_{\C}\mathcal{YD}}
\newcommand{\DM}{{}_{D}\mathcal{M}}
\newcommand{\BN}{\mathcal{B}}

\newcommand{\Ga}{g^{\ast}}
\newcommand{\Gb}{(g^2)^{\ast}}
\newcommand{\Gc}{(g^3)^{\ast}}
\newcommand{\Gd}{(g^4)^{\ast}}
\newcommand{\Ge}{(g^5)^{\ast}}
\newcommand{\X}{x^{\ast}}
\newcommand{\GXa}{(gx)^{\ast}}
\newcommand{\GXb}{(g^2x)^{\ast}}
\newcommand{\GXc}{(g^3x)^{\ast}}
\newcommand{\GXd}{(g^4x)^{\ast}}
\newcommand{\GXe}{(g^5x)^{\ast}}
\begin{abstract}
In this paper, we consider the Drinfeld double $\D$ of a $12$-dimensional Hopf algebra $\C$ over an algebraically
closed field of characteristic zero whose coradical is not a subalgebra and describe its simple modules,
projective covers of the simple modules and show that it is of wild representation type. Moreover,
we show that the Nichols algebras associated to non-simple indecomposable modules are infinite-dimensional.
In particular, for any object $V$ in $\CYD$, if $\BN(V)$ is finite-dimensional, then $V$ must be semisimple.
Finally, we describe the Nichols algebras associated to partial simple modules in terms of generators and relations.
As a byproduct, we obtain some Hopf algebras of dimension $72$ without the Chevalley property, that is, the coradical is not a subalgebra.

\bigskip
\noindent {\bf Keywords:} Nichols algebra; Hopf algebra; generalized lifting method.
\end{abstract}

\section{Introduction}
Let $\K$ be an algebraically closed field of characteristic zero. In
1975, I. Kaplansky conjectured that every Hopf algebra over $\K$ of
prime dimension must be isomorphic to a group algebra which was
proved by Y. Zhu \cite{Z94} in 1994. Since then, more and more
mathematicians have been trying to classify finite-dimensional Hopf
algebras of a given dimension and have made some progress. As
the aforementioned, the classification of Hopf algebras of prime
dimension $p$ has been completed by Y. Zhu \cite{Z94} and all of
them are isomorphic to the cyclic group algebra of dimension $p$.
Further results have completed the classification of Hopf algebras
of dimension $p^2$ for $p$ a prime (see \cite{Ma96},
\cite{AS}, \cite{Ng02}), of dimension $2p$ for $p$ an odd prime
(see \cite{Ma95}, \cite{Ng05}), and of dimension $2p^2$ for $p$
an odd prime (see \cite{HN09}). For the statement of the
classification for dimensions up to $100$, we refer to \cite{BG13}
and the references therein. Although some progress has been made,
there are few general results and methods and even the
classification of Hopf algebras of some low dimensions is still
open. However, the classification of finite-dimensional pointed Hopf
algebras as a special class has made some astonishing breakthrough
(\cite{AS10}, et al) and the classification of those with abelian
groups as the coradicals is expected to be completed soon with the
help of the lifting method introduced by N. Andruskiewitsch and
H.-J.~Schneider \cite{AS98}.

Now we describe the lifting method briefly. Let $A$ be a finite
dimensional Hopf algebra such that the coradical $A_0$ is a Hopf
subalgebra, which implies that the coradical filtration
$\{A_n\}_{n=0}^{\infty}$ is a Hopf algebra filtration where
$A_n=A_0\bigwedge A_{n-1}$. Let $H=\text{gr}\; A$ be the associated graded
Hopf algebra, that is, $H=\oplus_{n=0}^{\infty}H(n)$ with $H(0)=A_0$
and $H(n)=A_n/A_{n-1}$. Denote by $\pi: H\rightarrow A_0$ the Hopf
algebra projection of $H$ onto $A_0=H(0)$, then $\pi$ splits the
inclusion $i: A_0\hookrightarrow H$ and thus by a theorem of Radford
\cite{R85}, $H\cong R\sharp A_0$, where $R=H^{co\pi}=\{h\in
H\mid (id\otimes \pi)\Delta_H(h)=h\otimes 1\}$ is a braided Hopf algebra
in ${}^{A_0}_{A_0}\mathcal{YD}$. Moreover,
$R=\oplus_{n=0}^{\infty}R(n)=R\cap H(n)$ with $R(0)=\K$ and
$R(1)=\Pp(R)$, the space of primitive elements of $R$, which is a
braided vector space called the infinitesimal braiding. In
particular, the subalgebra generated as a braided Hopf algebra by
$V$ is so-called Nichols algebra of $V$ denoted by $\BN(V)$, which
plays a key role in the classification of pointed Hopf algebra under
the following
\begin{conj} $($\text{\rm Conjecture\,2.7 \cite{AS02}}$)$
Any finite-dimensional braided Hopf algebra $R$ in ${}^{A_0}_{A_0}\mathcal{YD}$
satisfying $\Pp(R)=R(1)$ is generated by $R(1)$.
\end{conj}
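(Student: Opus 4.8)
The final assertion is Conjecture~2.7 of \cite{AS02}, which is open in full generality; what follows is the standard plan of attack together with the cases in which it succeeds. The plan is first to reduce the statement to the equality $R=\BN(R(1))$. Put $V:=R(1)$ and let $S\subseteq R$ be the braided Hopf subalgebra generated by $V$; it is $\mathbb{N}$-graded with $S(0)=\K$ and $S(1)=V$, is generated in degree one, and satisfies $\Pp(S)=\Pp(R)\cap S=R(1)\cap S=S(1)$, which are exactly the axioms defining the Nichols algebra, so $S\cong\BN(V)$. Hence the conjecture is equivalent to surjectivity of the graded inclusion $\BN(V)\hookrightarrow R$ (equivalently, via the bosonization $R\#A_0$, to the assertion that this finite-dimensional Hopf algebra, whose coradical is the Hopf subalgebra $A_0$, is generated by $A_0$ together with its first coradical term).

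Next, suppose $R\ne\BN(V)$ and let $m\ge 2$ be minimal with $R(m)\supsetneq\BN(V)(m)$, so that $R(\ell)=\BN(V)(\ell)$ for $\ell<m$; for $w\in R(m)\setminus\BN(V)(m)$ the reduced coproduct then satisfies $\bar\Delta_R(w)\in\bigoplus_{i+j=m,\,i,j\ge 1}R(i)\otimes R(j)=\bigoplus_{i+j=m}\BN(V)(i)\otimes\BN(V)(j)\subseteq\BN(V)\otimes\BN(V)$. If one can produce $v\in\BN(V)(m)$ with $\bar\Delta_{\BN(V)}(v)=\bar\Delta_R(w)$, then $w-v$ is a primitive element of $R$ of degree $m\ge 2$, contradicting $\Pp(R)=R(1)$. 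Thus the whole matter reduces to the lifting problem: does the coassociative degree-$m$ element $\bar\Delta_R(w)$ of $\BN(V)^{+}\otimes\BN(V)^{+}$ lie in the image of $\bar\Delta$ on $\BN(V)(m)$?

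Whether this lifting always succeeds depends on the braided vector space $V=R(1)$, viewed as an object of ${}^{A_0}_{A_0}\mathcal{YD}$. When $A_0=\K\Gamma$ with $\Gamma$ abelian, $V$ is of diagonal type; since $\BN(V)\subseteq R$ is finite-dimensional the braiding matrix of $V$ has a finite arithmetic root system, and in this case the conjecture has been established, building on the classification of finite-dimensional Nichols algebras of diagonal type and their presentation by generators and relations (work of Heckenberger and of Angiono), which gives complete control over $\bar\Delta$ in every degree. When $\Gamma$ is nonabelian, one encodes $V$ by a rack together with a $2$-cocycle and reduces along abelian subracks to the diagonal case, using the available finiteness criteria for such Nichols algebras; this settles a large number of families, though not every braiding. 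When $A_0$ is a genuinely noncommutative cosemisimple Hopf algebra, even fewer tools are available.

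The hard part --- and the reason the statement is still only a conjecture --- is exactly this lifting problem for a general $V$: outside diagonal type there is no substitute for the convex PBW basis, the root system and the Weyl groupoid that drive the argument above, so one has no uniform way to show that $\bar\Delta_R(w)$ is always a genuine $\bar\Delta$-boundary, and hence no way to manufacture the forbidden degree-$m$ primitive of $R$. A proof of the full conjecture would require genuinely new structural input beyond what is presently available; the results of the present paper use only the accessible instances, where $V$ is of diagonal type (or reduces to it) with a finite root system.
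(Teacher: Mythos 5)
This statement is Conjecture~2.7 of \cite{AS02}; it is an open conjecture that the paper merely quotes and never proves (nor uses in an essential way beyond motivation), so there is no proof in the paper to compare against. You correctly recognize this, and your write-up is an accurate account of the state of the art rather than a proof. Your reduction is the standard and correct one: the subalgebra $S$ of $R$ generated by $V=R(1)$ satisfies the defining axioms of $\BN(V)$ (graded, generated in degree one, $\Pp(S)=\Pp(R)\cap S=V=S(1)$), so the conjecture amounts to $\BN(V)=R$; and if not, a minimal degree $m\ge 2$ with $R(m)\supsetneq\BN(V)(m)$ forces $\bar\Delta_R(w)$ into $\BN(V)^{+}\otimes\BN(V)^{+}$, and the whole difficulty is whether this coassociative element is a $\bar\Delta$-coboundary from $\BN(V)(m)$, which would yield a forbidden primitive of degree $m$. (The equivalent dual formulation via $\cite[Lemma\,2.4]{AS02}$ --- $R$ is generated in degree one iff the graded dual has all its primitives in degree one --- is the form actually invoked later in this paper, in the generation-in-degree-one lemma of Section~\ref{secHopfalgebra}, where it is verified by hand for the specific Nichols algebras at issue rather than deduced from the conjecture.) Your summary of the cases where the lifting succeeds (diagonal type via Heckenberger's root systems and Angiono's presentations, partial reductions for nonabelian groups via racks) is correct, and your identification of the general lifting problem as the genuine obstruction is exactly right: no complete proof exists, and none should be expected from the material in this paper.
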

Usually, if we fix $A_0$, then the lifting method consists of three steps:
\begin{itemize}
  \item Determine all braided vector spaces $V$ such that Nichols algebras $\BN(V)$ are finite-dimensional and
  describe $\BN(V)$ explicitly in terms of generators and relations.
  \item For such $V$, determine all possible finite-dimensional Hopf algebras $A$ such that the associated graded
  Hopf algebra $gr\,A$ is isomorphic to the bosonization $\BN(V)\sharp A_0$. We call $A$ a lifting of $V$ or $\BN(V)$ over $A_0$.
  \item Prove that any finite-dimensional Hopf algebra with $A_0$ as the coradical is generated by $V\sharp A_0\oplus A_0$.
\end{itemize}
So far, the lifting method has produced many striking results of the
classification of pointed or copointed Hopf algebras. For more
details about the results, we refer to \cite{A14}, \cite{BG13}
and the references therein.

 If $A$ is a Hopf algebra without the Chevalley property, then the coradical filtration $\{A_n\}_{n=0}^{\infty}$ is not a Hopf
 algebra filtration such that the associated graded coalgebra is not a Hopf algebra. To overcome this obstacle, Andruskiewitsch
 and Cuadra \cite{AC13} extended the lifting method by replacing the coradical filtration $\{A_n\}_{n=0}^{\infty}$ by the
 standard filtration $\{A_{[n]}\}_{n=0}^{\infty}$, which is defined recursively by
\begin{itemize}
  \item $A_{[0]}$ to be the subalgebra generated by the coradical $A_0$;
  \item $A_{[n]}=A_{[n-1]}\bigwedge A_{[0]}$.
\end{itemize}
Especially, if $A_0$ is a Hopf algebra, then $A_{[0]}=A_0$ and
standard filtration is just the coradical filtration. Under the
assumption that $S_A(A_{[0]})\subseteq A_{[0]}$, it turns out that
the standard filtration is a Hopf algebra filtration, and the
associated graded coalgebra
$S=gr\,A=\oplus_{n=0}^{\infty}A_{[n]}/A_{[n-1]}$ with $A_{[-1]}=0$
is also a Hopf algebra. If we denote as before, $\pi:S\rightarrow
A_0$ splits the inclusion $i:A_0\hookrightarrow S$ and thus by a
theorem of Radford, $S\cong R\sharp A_0$, where $R=S^{co\pi}=\{h\in
S\mid(id\otimes \pi)\Delta_S(h)=h\otimes 1\}$ is a braided Hopf algebra
in ${}^{A_0}_{A_0}\mathcal{YD}$. Moreover,
$R=\oplus_{n=0}^{\infty}R(n)=R\cap S(n)$ with $R(0)=\K$ and
$R(1)=\Pp(R)$, which is also a braided vector space called the
infinitesimal braiding. This is summarized in the following
\begin{thm} $($\text{\rm Theorem\,1.3 \cite{AC13}}$)$
Any Hopf algebra with injective antipode is a deformation of the bosonization of a Hopf algebra generated by a
cosemisimple coalgebra by a connected graded Hopf algebra in the category of Yetter-Drinfeld modules over the latter.
\end{thm}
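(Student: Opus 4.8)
The proof I have in mind follows Andruskiewitsch and Cuadra \cite{AC13}; let me outline its skeleton. Let $A$ be a Hopf algebra with injective antipode $S$ and coradical $A_0$, the sum of all simple subcoalgebras of $A$ --- in particular a cosemisimple subcoalgebra. The first step is to check that $A_{[0]}$, the subalgebra of $A$ generated by $A_0$, is a Hopf subalgebra. Since $S$ is a coalgebra morphism $A\to A^{cop}$, it sends simple subcoalgebras to simple subcoalgebras (a nonzero quotient of a simple coalgebra is simple, and $\varepsilon S=\varepsilon$ rules out the zero quotient), so $S(A_0)\subseteq A_0$; injectivity of $S$ together with $A_0$ being a direct sum of finite-dimensional simple subcoalgebras then forces $S(A_0)=A_0$. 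As $A_0$ is a subcoalgebra, applying $\Delta$ to a product and using that $\Delta$ is an algebra map shows that $A_{[0]}$ is a subbialgebra; and since $S$ is an algebra anti-morphism, $S(A_{[0]})$ is the subalgebra generated by $S(A_0)=A_0$, i.e.\ $S(A_{[0]})=A_{[0]}$. Hence $A_{[0]}$ is a Hopf subalgebra of $A$, generated as an algebra by the cosemisimple coalgebra $A_0$, and in particular the hypothesis $S_A(A_{[0]})\subseteq A_{[0]}$ is met.

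Next I would prove that the standard filtration $\{A_{[n]}\}_{n\ge 0}$, $A_{[n]}=A_{[n-1]}\bigwedge A_{[0]}=A_{[0]}\bigwedge A_{[n-1]}$, is a Hopf algebra filtration. Viewing $A_{[n]}$ as the $(n+1)$-th wedge power of the subcoalgebra $A_{[0]}$ and using $A_{[0]}\supseteq A_0$, these subspaces exhaust $A$ and form a coalgebra filtration, so $\Delta(A_{[n]})\subseteq\sum_{i+j=n}A_{[i]}\otimes A_{[j]}$ (and this estimate in fact characterizes $A_{[n]}$). The multiplicativity $A_{[m]}A_{[n]}\subseteq A_{[m+n]}$ is then obtained by strong induction on $m+n$: expanding $\Delta(ab)$ for $a\in A_{[m]}$, $b\in A_{[n]}$ via the estimate above, applying the inductive hypothesis to all summands of strictly lower total degree, and collecting the one remaining summand $A_{[m]}A_{[n]}\otimes A_{[0]}$, one gets $\Delta(A_{[m]}A_{[n]})\subseteq A_{[0]}\otimes A+A\otimes A_{[m+n-1]}$, that is, $A_{[m]}A_{[n]}\subseteq A_{[0]}\bigwedge A_{[m+n-1]}=A_{[m+n]}$. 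Finally $S(A_{[n]})\subseteq A_{[n]}$ follows by induction from $S(A_{[0]})=A_{[0]}$, the inclusion $S(C\bigwedge D)\subseteq S(D)\bigwedge S(C)$ for subcoalgebras $C,D$, and the symmetry of wedge powers of a fixed subcoalgebra. Thus $\operatorname{gr}A=\bigoplus_{n\ge 0}A_{[n]}/A_{[n-1]}$ (with $A_{[-1]}=0$) is a graded Hopf algebra with $(\operatorname{gr}A)(0)=A_{[0]}$.

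The last step is bosonization. The projection $\pi\colon\operatorname{gr}A\to(\operatorname{gr}A)(0)=A_{[0]}$ onto the degree-zero component is a Hopf algebra morphism splitting the inclusion $A_{[0]}\hookrightarrow\operatorname{gr}A$, so by Radford's theorem on Hopf algebras with a projection, $\operatorname{gr}A\cong R\sharp A_{[0]}$ with $R=(\operatorname{gr}A)^{co\pi}$ a braided Hopf algebra in ${}^{A_{[0]}}_{A_{[0]}}\mathcal{YD}$. The grading of $\operatorname{gr}A$ restricts to $R=\bigoplus_{n\ge 0}R(n)$ with $R(0)=\K$, so $R$ is a connected graded Hopf algebra in ${}^{A_{[0]}}_{A_{[0]}}\mathcal{YD}$, and $R(1)=\Pp(R)$ is the infinitesimal braiding. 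Therefore $\operatorname{gr}A$ is the bosonization $R\sharp A_{[0]}$ of a connected graded Hopf algebra in the Yetter--Drinfeld category over the Hopf algebra $A_{[0]}$, the latter generated by the cosemisimple coalgebra $A_0$; and $A$, being a filtered Hopf algebra whose associated graded Hopf algebra is precisely this bosonization, is a deformation of it, which is the assertion.

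The main obstacle is the second step --- specifically the inclusion $A_{[m]}A_{[n]}\subseteq A_{[m+n]}$: the wedge product does not commute with multiplication in any naive way, so one needs the exact wedge characterization of the filtration levels together with a careful strong induction, the delicate point being the handling of the top-degree cross term $A_{[m]}A_{[n]}\otimes A_{[0]}$ appearing in $\Delta(ab)$. The injectivity of $S$ is used essentially in the first step, to upgrade $S(A_0)\subseteq A_0$ to $S(A_0)=A_0$ and thereby make $A_{[0]}$ a Hopf subalgebra and the standard filtration stable under $S$; without it the construction does not get off the ground. The remaining steps are formal, the last being a direct invocation of Radford's projection theorem.
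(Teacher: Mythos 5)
The paper offers no proof of this statement---it is quoted verbatim as Theorem~1.3 of \cite{AC13}, and the surrounding text merely summarizes the construction (the standard filtration, the hypothesis $S_A(A_{[0]})\subseteq A_{[0]}$ making it a Hopf algebra filtration, and the Radford decomposition $\mathrm{gr}\,A\cong R\sharp A_{[0]}$), which is exactly the skeleton you reconstruct, so your proposal follows essentially the same route and is correct. One small remark: the inclusion $S(A_0)\subseteq A_0$ holds for any Hopf algebra, and in the infinite-dimensional setting injectivity of $S$ alone does not obviously upgrade it to the equality $S(A_0)=A_0$ you assert; fortunately only the inclusion $S(A_{[0]})\subseteq A_{[0]}$ is used in the remainder of your argument.
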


In order to produce some new Hopf algebras by using the generalized
lifting method, one needs to consider the following questions:
\begin{itemize}
  \item {\text{\rm Question\,\uppercase\expandafter{\romannumeral1} (\cite{AC13})}} Let $C$ be a cosemisimple coalgebra and $S : C \rightarrow C$
  an injective anti-coalgebra morphism. Classify all Hopf algebras $L$ generated by $C$, belonging to the class $C$, and such that $S|_C = S$.
  \item {\text{\rm Question\,\uppercase\expandafter{\romannumeral2} (\cite{AC13})}} Given $L$ as in the previous item, classify all connected graded
  Hopf algebras $R$ in ${}_L^L\mathcal{YD}$.
  \item {\text{\rm Question\,\uppercase\expandafter{\romannumeral3} (\cite{AC13})}} Given $L$ and $R$ as in previous items, classify all liftings,
  that is, classify all Hopf algebras $H$ such that $gr\,H\cong R\sharp L$.
\end{itemize}

If $A$ is a Hopf algebra satisfying $A_{[0]}$=$L$ where $L$ is an arbitrary finite-dimensional Hopf algebra, then we call $A$ is a Hopf algebra over $L$.
Following this generalized lifting method, G.-A.~Garcia and J.-M.-J.~Giraldi \cite{GG16} determined all finite-dimensional Hopf algebras
over a Hopf algebra of dimension $8$ without the Chevalley property, and the corresponding infinitesimal braiding is an irreducible object
and obtained some new Hopf algebras of dimension $64$.

In this paper, following the work of G.-A.~Garcia and
J.-M.-J.~Giraldi \cite{GG16}, we study this question in the case
$\A_{[0]}=\C$ is the unique Hopf algebra of dimension $12$, which is
the dual of a pointed Hopf algebra and without the Chevalley
property. First, we describe the Hopf algebra structure of $\C$,
which is given by
\begin{align*}
\C:&=\langle a, b\mid a^6=1, b^2=0, ba=\xi ab\rangle,  \\
\De(a)&=a\otimes a+ \Lam^{-1}b\otimes ba^3, \quad\De(b)=b\otimes a^4+a\otimes b,
\end{align*}
where $\xi$ is a primitive root of unity and $\Lam=(\xi-1)(\xi+1)^{-1}$. Notice that, as an algebra,
$\C$ is isomorphic to a quantum linear space, but the coalgebra structure is more complicated.

In order to find some new Nichols algebra, we first describe the
Hopf algebra structure of $\C$ in detail, and determine the Drinfeld
double $\D=\D(\C^{cop})$. Then we study the representations of $\D$
and describe its simple modules, projective covers of the simple
modules and some indecomposable modules. In fact, we show in Theorem
$\ref{thmsimplemoduleD}$, there exist $6$ one-dimensional modules
$\K_{\chi^{k}}$ for $k\in Z_6$ and $30$ two-dimensional modules
$V_{i,j}$ for $(i,j)\in \Lambda$ where $\Lambda=\{(i,j)\in Z_6\times
Z_6\mid 3i\neq j\}$. Moreover, we compute the Ext-quiver and show
that $\D$ is of wild representation type.

 It is well-known that the left $\D$ module category ${}_D\M$ is equivalent to the
 category $\CYD$ of Yetter-Drinfeld modules over $\C$. Next, by using this fact, we
 translate the simple and indecomposable $\D(\C^{cop})$-modules to the Yetter-Drinfeld
 modules over $\C$. In order to study the Nichols algebras, we describe explicitly their
 structures as Yetter-Drinfeld modules and their braiding. Using the braiding, we prove
 that Nichols algebras generated by finite-dimensional non-simple indecomposable modules
 are infinite-dimensional. In particular, for any object $V$ in $\CYD$, if $\BN(V)$ is finite-dimensional,
 then $V$ must be semisimple. As it is a very difficult question to determine the structures of Nichols algebras in terms of generators and relations,
 we just give partial answer to this question and we show that Nichols algebra $\BN(V)$ is finite-dimensional
 if $V$ is isomorphic either to $\K_{\chi^{k}}$ with $k\in\{1,3,5\}$, $V_{3,1}$, $V_{3,5}$, $V_{2,2}$ or $V_{2,4}$.
 Moreover, we describe their structures in terms of generators and relations.
 $\BN(\K_{\chi^{k}})\cong \bigwedge \K_{\chi^{k}}$ for $k\in\{1,3,5\}$.
 And Nichols algebra  $\BN(V)$ over a two dimensional simple module mentioned above as an algebra is isomorphic
 to a quantum linear space while as a coalgebra is more complicated since the braiding is more complicated.
 Moreover, to our knowledge, these Nichols algebras seem to be new $6$-dimensional Nichols algebras.
 Finally, we need to study the deformations of the bosonizations of these Nichols algebras and try to find
 some $72$-dimensional Hopf algebras.

 The structure of the paper is as follows.
 In section $\ref{Preliminary}$ we first recall some basic definitions and facts about Yetter-Drinfeld module,
 Nichols algebra, Drinfeld double and representation type. In section $\ref{secDrinfelddouble}$,
 we give a detailed description of the Hopf algebra structure of $\C$ and determine the Drinfeld
 double $\D=\D(\C^{cop})$ in terms of generators and relations. In section $\ref{secPresentation}$,
 we study the representation of the Drinfeld double $\D(\C^{cop})$. We first describe the simple
 modules and the projective covers of the simple modules. Then we study its Ext-quiver, compute
 the separation diagram of the Ext-quiver and as a result show that $\D(\C^{cop})$ is of wild
 representation type. In section $\ref{secCYD}$, we determine the simple objects and projective
 covers of simple modules of $\CYD$ by using the equivalence $\DM\simeq \CYD$, and also describe
 their braiding. In section $\ref{secNicholsalg}$, we show that the Nichols algebras $\BN(\K_{\chi^{k}})$
 for $k\in\{1,3,5\}$ are finite-dimensional exterior algebras and Nichols algebra $\BN(V)$ is finite-dimensional
 if $V$ is isomorphic either to $V_{3,1}$, $V_{3,5}$, $V_{2,2}$ or $V_{2,4}$, which as an algebra is isomorphic to the quantum linear
 space but as a coalgebra is more complicated since the braiding is more complicated.
 In section $\ref{secHopfalgebra}$, we determine all finite-dimensional Hopf algebras $A$
 such that $A_{[0]}\cong \C$ and the corresponding infinitesimal braiding $V$
 is $\K_{\chi^{k}}$ with $k\in\{1,3,5\}$, $V_{3,1}$, $V_{3,5}$, $V_{2,2}$, or $V_{2,4}$
 and show that the bosonizations of the Nichols algebras associated to the simple modules
 mentioned above do not admit non-trivial deformations. As a byproduct, we get some Hopf algebras of
 dimension $72$ without the Chevalley property.

\section{Preliminaries}\label{Preliminary}
\paragraph{Conventions.} Throughout the paper, our ground field $\K$ is an algebraically closed field of characteristic zero.
Our references for Hopf algebra theory are \cite{M93} and \cite{R11} and the reference for representation theory is \cite{ARS95}.

 The notation for a Hopf algebra $H$ is standard: $\Delta$, $\epsilon$, and $S$ denote the comultiplication, the counit and the antipode.
 We use Sweedler's notation for the comultiplication and coaction, for example, for any $h\in H$, $\Delta(h)=h_{(1)}\otimes h_{(2)}$,
 $\Delta^{(n)}=(\Delta\otimes id^{\otimes n})\Delta^{(n-1)}$. Given a Hopf algebra $H$ with bijective antipode, we denote by $H^{op}$
 the Hopf algebra with the opposite multiplication, $H^{cop}$ the Hopf algebra with the opposite comultiplication, and $H^{bop}$ the Hopf algebra
 $H^{op\,cop}$. If $V$ is a $\K$-vector space, $v\in V$ and $f\in V\As$, we use either $f(v)$, $\langle f$, $v\rangle$, or $\langle v,
 f\rangle$ to denote the evaluation.
For any $n>1$, $M_n(\K)$ and $M_n\As(\K)$ denote matrix algebra and matrix coalgebra. If a simple coalgebra $C\cong M_n\As(\K)$,
we call the basis $(c_{ij})_{1\leq i,j\leq n}$ a comatrix basis if $\Delta(c_{i,j})=\sum_{k=1}^{n}c_{ik}\otimes c_{kj}$ and $\epsilon(c_{ij})=\delta_{i,j}$.

If $M$ is a left $H$-module, we denote by $Top(M)$ and $Soc(M)$ the Top (or called head) of $M$ and the sole of $M$ respectively,
by $ExtQ(M)$ the Ext-quiver or Gabriel quiver of $M$.

\subsection{Yetter-Drinfeld module and Nichols algebra}
\begin{defi}
Let $H$ be a Hopf algebra with bijective antipode. A Yetter-Drinfeld module over $H$ is a left $H$-module and a left $H$-comodule with comodule structure denoted by $\delta: V\mapsto H\otimes V, v\mapsto v_{(-1)}\otimes v_{(0)}$, such that
\begin{align*}
\delta(h\cdot v)=h_{(1)}v_{(-1)}S(h_{(3)})\otimes h_{(2)}v_{(0)},
\end{align*}
for all $v\in V,h\in H$. Let ${}^{H}_{H}\mathcal{YD}$ be the
category of Yetter-Drinfeld modules over $H$ with $H$-linear and
$H$-colinear maps as morphisms.
\end{defi}

The category ${}^{H}_{H}\mathcal{YD}$ is monoidal, braided. Indeed, if $V,W\in {}^{H}_{H}\mathcal{YD}$, $V\otimes W$ is the
tensor product over $\mathbb{C}$ with the diagonal action and coaction of $H$ and braiding
\begin{align}\label{equbraidingYDcat}
c_{V,W}:V\otimes W\mapsto W\otimes V, v\otimes w\mapsto v_{(-1)}\cdot w\otimes v_{(0)},\forall\,v\in V, w\in W.
\end{align}
Moreover, ${}^{H}_{H}\mathcal{YD}$ is rigid. That is, it has left dual and right dual, if we take $V\As$ and ${}\As V$ to the dual of
$V$ as vector space, then the left dual and the right dual are defined by
\begin{align*}
\langle h\cdot f,v\rangle=\langle f,S(h)v\rangle,\quad f_{(-1)}\langle f_{(0)},v\rangle=S^{-1}(v_{(-1)})\langle f, v_{(0)}\rangle,\\
\langle h\cdot f,v\rangle=\langle f,S^{-1}(h)v\rangle,\quad f_{(-1)}\langle f_{(0)},v\rangle=S(v_{(-1)})\langle f, v_{(0)}\rangle.
\end{align*}
We consider Hopf algebra in ${}^{H}_{H}\mathcal{YD}$. If $R$ is a Hopf algebra in ${}^{H}_{H}\mathcal{YD}$,
the space of primitive elements $P(R)=\{x\in R|\delta(x)=x\otimes 1+1\otimes x\}$ is a Yetter-Drinfeld
submodule of $R$. Moreover, for any finite-dimensional graded Hopf algebra in ${}^{H}_{H}\mathcal{YD}$, it satisfies the Poincar\'{e} duality:
\begin{pro}$\cite[Proposition\,3.2.2]{AG99}$
Let $R=\oplus_{n=0}^N R(i)$ be a graded Hopf algebra in ${}^{H}_{H}\mathcal{YD}$, and suppose that $R(N)\neq 0$. Then $\dim R(i)=\dim R(N-i)$ for any $0\leq i<N$.
\end{pro}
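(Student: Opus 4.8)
The plan is to reduce the statement to two classical structural facts about finite-dimensional Hopf algebras, stated in their braided form: the space of left integrals $\int^{l}_{R}=\{t\in R\mid rt=\epsilon(r)t\text{ for all }r\in R\}$ is one-dimensional, and $R$ is a Frobenius algebra whose Frobenius functional may be taken to be a nonzero left integral $\lambda$ of the graded dual $R^{*}$, so that $B(x,y):=\langle\lambda,xy\rangle$ is a nondegenerate bilinear form on $R$. Both facts are available for finite-dimensional Hopf algebras in ${}^{H}_{H}\mathcal{YD}$ (and $R^{*}$ is again such a Hopf algebra, the category being rigid), so I would start by recording them and fixing such a $\lambda$.

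Next I would pin down the integrals in the top degree. Since $R$ is connected graded (so $R(0)=\K$), its augmentation ideal is $R_{+}=\bigoplus_{i\geq 1}R(i)$, and as $R(N)R_{+}$ and $R_{+}R(N)$ lie in degrees $>N$ they vanish; hence every element of $R(N)$ is a two-sided integral, so $0\neq R(N)\subseteq\int^{l}_{R}$. One-dimensionality of the integrals then forces $\dim R(N)=1$ and $R(N)=\int^{l}_{R}$. Carrying out the same argument for $R^{*}$, which is connected graded with $R^{*}(i)=R(i)^{*}$ sitting in degree $i$ and top degree $N$, gives $\int^{l}_{R^{*}}=R^{*}(N)=R(N)^{*}$; in particular the Frobenius functional $\lambda$ vanishes on $R(i)$ for all $i<N$ and restricts to an isomorphism $R(N)\xrightarrow{\sim}\K$.

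The conclusion is then formal. For homogeneous $x\in R(i)$ and $y\in R(j)$ we have $xy\in R(i+j)$, so $B(x,y)=\langle\lambda,xy\rangle=0$ unless $i+j=N$; thus, with respect to the decomposition $R=\bigoplus_{i}R(i)$, the form $B$ is carried by the pairings $R(i)\times R(N-i)\to\K$. Since $B$ is nondegenerate, each such pairing is nondegenerate on both sides, whence $\dim R(i)=\dim R(N-i)$ for all $0\leq i\leq N$ (indeed $B$ identifies $R(N-i)$ with $R(i)^{*}$).

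I expect the only genuinely non-formal input to be the two black boxes of the first step --- one-dimensionality of the integrals and the Frobenius property --- in the braided Yetter-Drinfeld setting; this is where the real work sits, and I would either invoke the braided analogue of the Larson-Sweedler theorem from the literature or, alternatively, attempt to transfer these properties from the ordinary finite-dimensional Hopf algebra $R\sharp H$, taking care that the module isomorphisms needed are only required to be $\K$-linear. Two smaller points warranting a line of care are that $R^{*}$ is genuinely a finite-dimensional connected graded Hopf algebra in a Yetter-Drinfeld category (so that the top-degree argument and the uniqueness of integrals apply to it too), and that its grading does place $R(i)^{*}$ in degree $i$.
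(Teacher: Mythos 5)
Your argument is correct and is essentially the one behind the cited result [AG99, Proposition 3.2.2], which the paper quotes without proof: the integral of the finite-dimensional braided Hopf algebra $R$ is concentrated in (and spans) the top degree $R(N)$, and the resulting Frobenius form $\lambda(xy)$ pairs $R(i)$ nondegenerately with $R(N-i)$. The two inputs you flag as black boxes (one-dimensionality of integrals and the Frobenius property for finite-dimensional Hopf algebras in ${}^{H}_{H}\mathcal{YD}$) are indeed standard and can be obtained exactly as you suggest, either from the braided Larson--Sweedler theorem or by transfer from the bosonization $R\sharp H$.
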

\begin{defi}
Let $V\in{}^{H}_{H}\mathcal{YD}$ and $I(V)\subset T(V)$ be the largest $\mathds{N}$-graded ideal and coideal $I(V)\subset T(V)$
such that $I(V)\cap V=0$. We call $\mathcal{B}(V)=T(V)/I(V)$ the Nichols algebra of $V$. Then $\mathcal{B}(V)=\oplus_{n\geq 0}\mathcal(B)^n(V)$
is an $\mathds{N}$-graded Hopf algebra in ${}^{H}_{H}\mathcal{YD}$.
\end{defi}
\begin{lem}$\cite{AS02}$
The Nichols algebra of an object $V\in{}^{H}_{H}\mathcal{YD}$ is the
(up to isomorphism) unique $\mathds{N}$-graded Hopf algebra $R$ in
${}^{H}_{H}\mathcal{YD}$ satisfying the following properties:
\begin{align*}
&R(0)=\mathds{k}, \quad R(1)=V,\\
&R(1)\ \text{\ generates the algebra R},\\
&P(R)=V.
\end{align*}
\end{lem}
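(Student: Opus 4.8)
The plan is to prove two things: that $\BN(V)=T(V)/I(V)$ satisfies the three listed properties, and that any $\mathds{N}$-graded Hopf algebra $R$ in ${}^{H}_{H}\mathcal{YD}$ with those properties is isomorphic to $\BN(V)$. For existence I would first recall that $T(V)$ is the free braided Hopf algebra on $V$: it carries a canonical $\mathds{N}$-grading with $T(V)(1)=V$, and a Hopf algebra structure in ${}^{H}_{H}\mathcal{YD}$ in which every element of $V$ is primitive. One checks that $I(V)$---the sum of all $\mathds{N}$-graded two-sided ideals of $T(V)$ that are simultaneously coideals, Yetter-Drinfeld submodules, and meet $V$ trivially---is again of this type, hence a Hopf ideal, so that $\BN(V)$ is an $\mathds{N}$-graded Hopf algebra in ${}^{H}_{H}\mathcal{YD}$. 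Since $I(V)$ is graded with $I(V)\cap V=0$ and $\epsilon(I(V))=0$, its components in degrees $0$ and $1$ vanish, giving $\BN(V)(0)=\K$ and $\BN(V)(1)=V$; and $\BN(V)$ is generated in degree $1$ because $T(V)$ is. The substantive property is $P(\BN(V))=V$. The inclusion $V\subseteq P(\BN(V))$ is immediate; for the converse I would argue by contradiction. If $\BN(V)$ possessed a nonzero homogeneous primitive element of degree $n\geq 2$, let $W$ be the Yetter-Drinfeld submodule it generates; it lies in $\BN(V)(n)$ and, since $P(\BN(V))$ is a Yetter-Drinfeld submodule, consists of primitives. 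Then the two-sided ideal $(W)$ of $\BN(V)$ is also a coideal and a Yetter-Drinfeld submodule (the standard fact that the ideal generated by primitives is a coideal, carried out in the braided category), graded with components only in degrees $\geq n\geq 2$, so $(W)\cap V=0$. Pulling $(W)$ back along the projection $\pi\colon T(V)\twoheadrightarrow\BN(V)$ produces an $\mathds{N}$-graded biideal and Yetter-Drinfeld submodule of $T(V)$ that strictly contains $I(V)$ (because $\pi$ is the identity on $V$) yet still meets $V$ trivially, contradicting the maximality of $I(V)$.

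For uniqueness, let $R$ be as in the statement. Since $R$ is generated by $R(1)=V$ and $V\subseteq P(R)$, the identity of $V$ extends, by the universal property of $T(V)$, to a morphism of $\mathds{N}$-graded Hopf algebras $\phi\colon T(V)\twoheadrightarrow R$ in ${}^{H}_{H}\mathcal{YD}$ (it is a coalgebra morphism because it sends the primitive generators of $T(V)$ to primitives of $R$, and two algebra morphisms $T(V)\to R\otimes R$ agreeing on $V$ coincide). Its kernel $J$ is an $\mathds{N}$-graded biideal and Yetter-Drinfeld submodule with $J\cap V=0$, hence $J\subseteq I(V)$ by maximality, and $\phi$ factors through a graded Hopf algebra surjection $R\twoheadrightarrow\BN(V)$ equal to the identity on $V$; it remains to prove $J=I(V)$. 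Suppose not and pick $y\in I(V)\setminus J$ homogeneous of minimal degree $n$; since $I(V)(0)=I(V)(1)=0$ we have $n\geq 2$, and minimality forces $J(i)=I(V)(i)$ for $0<i<n$, so $I(V)(i)$ maps to $0$ in $R=T(V)/J$ for such $i$. As $I(V)$ is a coideal, the "middle" part of $\Delta(y)$ lies in $\sum_{0<i<n}\bigl(I(V)(i)\otimes T(V)(n-i)+T(V)(i)\otimes I(V)(n-i)\bigr)$, which therefore maps to $0$ in $R\otimes R$; hence the image $\bar{y}\in R(n)$ is primitive, so $\bar{y}\in P(R)\cap R(n)=V\cap R(n)=0$, i.e.\ $y\in J$, a contradiction. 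Thus $J=I(V)$ and $R\cong\BN(V)$.

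I expect the main obstacle to be the identity $P(\BN(V))=V$. One must verify that the ideal generated by a hypothetical higher-degree primitive is simultaneously a coideal and a Yetter-Drinfeld submodule, so that its preimage under $\pi$ is a genuine competitor for the maximal ideal $I(V)$; and throughout---in defining the comultiplication on $T(V)$, in extending the identity of $V$ to $\phi$, and in the coideal computation---one must work inside the braided monoidal category ${}^{H}_{H}\mathcal{YD}$ with the braiding \eqref{equbraidingYDcat}, rather than with the ordinary tensor symmetry.
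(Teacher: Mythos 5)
The paper does not prove this lemma; it is quoted verbatim from the cited reference [AS02], so there is no in-text argument to compare against. Your proof is the standard one for this characterization (maximality of $I(V)$ gives $P(\mathcal{B}(V))=V$; the universal property of $T(V)$ plus the graded-coideal/minimal-degree argument gives uniqueness), and it is correct, including the points that genuinely require care in the braided setting: passing to the Yetter--Drinfeld submodule generated by a higher-degree primitive before forming the ideal, and using that $I(V)(i)=J(i)$ in degrees below $n$ to see that $\bar{y}$ is primitive in $R$.
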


Nichols algebras play a key role in the classification of pointed Hopf algebras, and
we close this subsection by giving the explicit relation between $V$ and $V\As$ in ${}^{H}_{H}\mathcal{YD}$.
\begin{pro}$\cite[Proposition\,3.2.30]{AG99}$\label{proNicholsdual}
Let $V$ be an object in ${}_H^H\mathcal{YD}$. If $\BN(V)$ is finite-dimensional, then $\BN(V\As)\cong \BN(V)\As$.
\end{pro}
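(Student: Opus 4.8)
The plan is to realize $\BN(V)\As$ as the graded dual of $\BN(V)$ and then recognize that graded dual as a Nichols algebra by the uniqueness criterion recalled above (the unique $\mathds{N}$-graded Hopf algebra $R$ in ${}^{H}_{H}\mathcal{YD}$ with $R(0)=\K$, $R(1)$ the prescribed object, $R$ generated in degree one and $P(R)=R(1)$). So first I would set $R=\BN(V)=\oplus_{n=0}^{N}R(n)$; since $\BN(V)$ is finite-dimensional every homogeneous component $R(n)$ is finite-dimensional, so the graded dual $R^{\vee}:=\oplus_{n=0}^{N}R(n)\As$ is well defined. Using that ${}^{H}_{H}\mathcal{YD}$ is rigid, each $R(n)\As$ is again an object of the category with the left-dual action and coaction recorded in the Preliminaries, and dualizing the structure maps of $R$, together with the canonical isomorphisms $(R(i)\otimes R(j))\As\cong R(j)\As\otimes R(i)\As$ of the rigid braided category, equips $R^{\vee}$ with the structure of an $\mathds{N}$-graded Hopf algebra in ${}^{H}_{H}\mathcal{YD}$: its multiplication is dual to the comultiplication of $R$ and its comultiplication is dual to the multiplication of $R$. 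In particular $R^{\vee}(0)=\K$ and $R^{\vee}(1)=R(1)\As=V\As$.

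Next I would check the two remaining conditions of the uniqueness criterion for $R^{\vee}$. The key input is the standard duality between primitives and indecomposables for a finite-dimensional connected graded Hopf algebra: writing $R^{+}=\oplus_{n\ge 1}R(n)$ and $Q(R)=R^{+}/(R^{+})^{2}$ for the space of indecomposables, one has graded isomorphisms $P(R^{\vee})\cong Q(R)\As$ and, symmetrically, $Q(R^{\vee})\cong P(R)\As$. Since $\BN(V)$ is generated in degree one, $(R^{+})^{2}$ fills up $R(n)$ for $n\ge 2$ and meets $R(1)$ trivially, so $Q(R)$ is concentrated in degree one and equals $V$; hence $P(R^{\vee})\cong V\As=R^{\vee}(1)$. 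Since $P(\BN(V))=V=R(1)$ is concentrated in degree one, $Q(R^{\vee})\cong V\As$ is concentrated in degree one, so by the graded Nakayama lemma $R^{\vee}$ is generated as an algebra by $R^{\vee}(1)=V\As$. Thus $R^{\vee}$ is an $\mathds{N}$-graded Hopf algebra in ${}^{H}_{H}\mathcal{YD}$ with $R^{\vee}(0)=\K$, $R^{\vee}(1)=V\As$, generated in degree one and with $P(R^{\vee})=R^{\vee}(1)$; the uniqueness criterion then forces $R^{\vee}\cong\BN(V\As)$, that is, $\BN(V)\As\cong\BN(V\As)$.

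The main obstacle is the bookkeeping in the first step: one has to verify carefully that the graded dual, equipped with the left-dual Yetter--Drinfeld structure, really is a Hopf algebra in ${}^{H}_{H}\mathcal{YD}$ and that the braiding on $V\As$ produced in this way is the intrinsic braiding of the dual object $V\As$ in the category. Because ${}^{H}_{H}\mathcal{YD}$ is braided but not symmetric, this is exactly the point where $op$/$cop$ subtleties enter, and conventions must be matched so that no spurious transpose appears in the identifications $(X\otimes Y)\As\cong Y\As\otimes X\As$; everything else is formal. An alternative route that sidesteps part of this is to construct directly the braided bilinear pairing $T(V)\otimes T(V\As)\to\K$ extending the evaluation $V\otimes V\As\to\K$, show that in each degree $n$ its left and right radicals coincide with the kernels of the braided symmetrizer maps that define $\BN$, and conclude that it descends to a nondegenerate pairing $\BN(V)\otimes\BN(V\As)\to\K$; Poincar\'e duality then matches the two sides degree by degree. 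I expect the criterion-based argument to be the cleaner one to write out in full.
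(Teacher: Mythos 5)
The paper offers no proof of this proposition --- it is quoted verbatim from \cite[Proposition 3.2.30]{AG99} --- so there is no in-paper argument to compare against; judged on its own, your criterion-based proof is correct and is the standard way to establish the result. The two dualities $P(R^{\vee})\cong Q(R)\As$ and $Q(R^{\vee})\cong P(R)\As$ are exactly what exchange ``generated in degree one'' with ``all primitives in degree one,'' and the uniqueness characterization of Nichols algebras recalled in the paper (the lemma quoted from \cite{AS02}) then identifies the graded dual with $\BN(V\As)$; the Poincar\'e duality statement quoted just before the proposition is not even needed for this route. The one point that genuinely requires care --- and you correctly flag it --- is that in a braided non-symmetric category the transposed structure maps make the graded dual a Hopf algebra only after the identifications $(X\otimes Y)\As\cong Y\As\otimes X\As$ are fixed consistently; depending on conventions one lands on the left or the right dual of $V$, which is why some sources state the result with ${}\As V$ or insert an $S^{\pm2}$-twist (harmless here, and in any case immaterial for the application in the paper, where only $\dim\BN(V\As)=\dim\BN(V)$ and the matching of the two-dimensional simples under duality are used). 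Your alternative route, via the canonical Hopf pairing $T(V)\otimes T(V\As)\to\K$ whose left and right radicals cut out $\BN(V)$ and $\BN(V\As)$ respectively, is in fact how \cite{AG99} sets things up; it buys a direct nondegenerate pairing $\BN(V)\otimes\BN(V\As)\to\K$ and avoids the primitives/indecomposables bookkeeping, at the cost of first proving that the pairing is a Hopf pairing for the braided structures. Either argument is complete once the dual-object conventions are pinned down.
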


\subsection{Radford biproduct construction}
Let $R$ be a bialgebra (rep.~Hopf algebra) in ${}^{H}_{H}\mathcal{YD}$ and denote the coproduct
by $\Delta_R(r)=r^{(1)}\otimes r^{(2)}$. We define the Radford biproduct $R\#H$. As a vector space,
$R\#H=R\otimes H$ and the multiplication and comultiplication are given by the smash product and smash-coproduct, respectively:
\begin{align}
(r\#g)(s\#h)&=r(g_{(1)}\cdot s)\#g_{(2)}h,\\
\Delta(r\#g)&=r^{(1)}\#(r^{(2)})_{(-1)}g_{(1)}\otimes (r^{(2)})_{(0)}\#g_{(2)}.
\end{align}
Clearly, the map $\iota:H\rightarrow R\#H, h\mapsto 1\# h,\ \forall h\in H$, and the map
$\pi:R\#H\rightarrow H,r\#h\mapsto \epsilon_R(r)h,\ \forall r\in R, h\in H$ such that $\pi\circ\iota=id_H$. Moreover, $R=(R\#H)^{coH}$.

Let $R, S$ be bialgebras (resp. Hopf algebra) in ${}^{H}_{H}\mathcal{YD}$ and $f:R\rightarrow S$ be a bialgebra
morphisms in ${}^{H}_{H}\mathcal{YD}$. $f\#id:R\#H\rightarrow S\#H$ defined by $(f\#id)(r\#h)=f(r)\#h,
\forall r\in R, h\in H$. In fact, $R\rightarrow R\#H$ and $f\mapsto f\# id$ describes a
functor from the category of bialgebras (resp. Hopf algebras) in ${}^{H}_{H}\mathcal{YD}$ and their morphisms
to the category of usual bialgebras (resp. Hopf algebras).

Conversely, if $A$ is a bialgebra (resp. Hopf algebra) and $\pi:A\rightarrow H$ a bialgebra admitting a bialgebra
section $\iota:H\rightarrow A$ such that $\pi\circ\iota=id_H$ (we call $(A,H)$ a Radford pair for convenience),
$R=A^{coH}=\{a\in A\mid(id\otimes\pi)\Delta(a)=a\otimes 1\}$ is a bialgebra (resp. Hopf algebra) in ${}^{H}_{H}\mathcal{YD}$
and $A\simeq R\#H$, whose Yetter-Drinfeld module and coalgebra structures are given by:
\begin{align*}
h\cdot r&=h_{(1)}rS_A(h_{(2)}),
\delta(r)=(\pi\otimes id)\Delta_A(r),\\
\Delta_R(r)&=r_{(1)}(\iota S_H(\pi(r_{(2)})))\otimes r_{(3)},
\epsilon_R=\epsilon_A|_R,\\
S_R(r)&=(\iota\pi(r_{(1)}))S_A(r_{(2)}),\quad\text{if $A$ is a Hopf algebra}.
\end{align*}

\subsection{Drinfeld double}
\begin{defi}
Let $H$ be a finite-dimensional Hopf algebra with bijective antipode $S$ over $\K$. The Drinfeld double $\D(H)=H^{\ast\,cop}\otimes H$
is a Hopf algebra with the tensor product coalgebra structure and algebra structure defined by
\begin{align}\label{equDrinfelddouble}
(p\otimes a)(q\otimes b)=p\langle q_{(3)}, a_{(1)}\rangle q_{(2)}\otimes a_{(2)}\langle q_{(1)}, S^{-1}(a_{(3)})\rangle.
\end{align}
\end{defi}

By $\cite[Proposition 10.6.16]{M93}$, the category
${}_{\D(H)}{\M}$ of left modules is equivalent to the category ${}_H\mathcal{YD}^H$ of Yetter-Drinfeld
modules. But ${}_H\mathcal{YD}^H$ is equivalent
to the category ${}_{H^{cop}}^{H^{cop}}\mathcal{YD}$ of Yetter-Drinfeld modules.
Thus we have the following
result.
\begin{pro}$\cite{M93}$
Let $H$ be a finite-dimensional Hopf algebra with bijective antipode $S$ over $\K$.
Then the category ${}_{\D(H^{cop})}\M$ of left modules is equivalent to the category ${}_H^H\mathcal{YD}$
of Yetter-Drinfeld modules.
\end{pro}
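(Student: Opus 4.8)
The plan is to obtain the equivalence by specialising and composing two equivalences already at our disposal. The first is the one recalled immediately above: for \emph{any} finite-dimensional Hopf algebra $K$ with bijective antipode, \cite[Proposition~10.6.16]{M93} yields an equivalence ${}_{\D(K)}\M\simeq{}_K\mathcal{YD}^K$ (indeed of braided monoidal categories). Under it a left $\D(K)$-module $M$ is made into a left $K$-module through the Hopf inclusion $K\hookrightarrow\D(K)$, $a\mapsto\epsilon\otimes a$, and into a right $K$-comodule through the inclusion $K^{\ast\,cop}\hookrightarrow\D(K)$, $p\mapsto p\otimes 1$, together with the canonical identification of left $K^{\ast}$-modules with right $K$-comodules (which uses the finite-dimensionality of $K$); the multiplication rule $(\ref{equDrinfelddouble})$ of $\D(K)$ is precisely what forces these two structures to obey the Yetter-Drinfeld axiom of ${}_K\mathcal{YD}^K$. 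I would apply this with $K=H^{cop}$: its antipode is $S^{-1}$, which is still bijective, so the hypotheses are met and one gets ${}_{\D(H^{cop})}\M\simeq{}_{H^{cop}}\mathcal{YD}^{H^{cop}}$.

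The second ingredient is the equivalence ${}_K\mathcal{YD}^K\simeq{}^{K^{cop}}_{K^{cop}}\mathcal{YD}$ noted above. Concretely, an object of ${}_K\mathcal{YD}^K$ is a vector space $V$ equipped with a left $K$-action $\cdot$ and a right $K$-coaction $\rho\colon V\to V\otimes K$; since $K$ and $K^{cop}$ have the same underlying algebra, one keeps the action and replaces $\rho$ by the coaction $\delta=\tau\circ\rho\colon V\to K\otimes V$, with $\tau$ the flip, obtaining a left $K^{cop}$-comodule and hence a candidate object of ${}^{K^{cop}}_{K^{cop}}\mathcal{YD}$; on morphisms the assignment is the identity. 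The point to verify is that the compatibility condition for $(V,\cdot,\rho)$ over $K$ is equivalent to the Yetter-Drinfeld axiom recalled above applied to $(V,\cdot,\delta)$ over $K^{cop}$: substituting $\Delta_{K^{cop}}=\tau\circ\Delta_K$ and $S_{K^{cop}}=S^{-1}$ into the latter and pushing the flip through the Sweedler legs converts it into the former. This construction has an obvious two-sided inverse, so it is in fact an isomorphism of categories, a fortiori an equivalence.

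To conclude I would take $K=H^{cop}$ in the second step, use $(H^{cop})^{cop}=H$, and compose with the first, giving ${}_{\D(H^{cop})}\M\simeq{}_{H^{cop}}\mathcal{YD}^{H^{cop}}\simeq{}^{H}_{H}\mathcal{YD}$. The only genuinely delicate point --- and the one I would flag as the main obstacle --- is the index bookkeeping in the middle paragraph: one has to track carefully where $S$ rather than $S^{-1}$ appears and which tensor leg receives which Sweedler component after flipping the coaction, since a slip there would ruin the matching of the two compatibility conditions. Everything else is purely formal.
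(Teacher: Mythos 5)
Your argument is correct and is essentially the paper's own: the paper likewise invokes \cite[Proposition 10.6.16]{M93} for the equivalence ${}_{\D(K)}\M\simeq{}_K\mathcal{YD}^K$ and then composes with the side-switching equivalence ${}_K\mathcal{YD}^K\simeq{}^{K^{cop}}_{K^{cop}}\mathcal{YD}$, specialised to $K=H^{cop}$. You merely spell out the coaction-flipping and antipode bookkeeping that the paper leaves implicit.
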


\subsection{Ext-Quiver and representation type}
Let $\Lambda$ be a finite-dimensional algebra, and $\{S_1, ...,
S_n\}$ a complete list of non-isomorphic simple modules. The
Ext-Quiver of $\Lambda$ is the quiver $ExtQ(\Lambda)$ with vertices
$1,2,...,n$ and $\dim Ext_{\Lambda}^1(S_i,\,S_j)$ arrows from the
vertex $i$ to $j$. The separated quiver of $\Lambda$ is constructed
as follows: The set of vertices is $\{S_1, ..., S_n, S_1\As, ...,
S_n\As\}$ and we write $\dim\,Ext_{\Lambda}^1(S_i, S_j)$ arrows from
$S_i$ to $S_j\As$.

Let us denote by $\Gamma_{\Lambda}$ the underlying graph of the separated quiver of $\Lambda$. For a finite dimensional algebra with radical square zero,
the following Proposition is well-known.
\begin{pro}\label{proRepType}
Let $\Lambda$ be a finite-dimensional algebra with radical square zero. Then $\Lambda$ is of finite (resp. tame)
representation type if and only if $\Gamma_{\Lambda}$ is a finite (resp. affine) disjoint union of Dynkin diagrams.
\end{pro}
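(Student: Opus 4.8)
The plan is to reduce the statement to Gabriel's theorem together with the Nazarova / Donovan--Freislich classification of tame hereditary algebras, by passing to the hereditary algebra attached to the separated quiver. First I would record the structure. Put $r=\mathrm{rad}\,\Lambda$, so that $r^{2}=0$, and $S=\Lambda/r$, which by Wedderburn's theorem is a finite product of full matrix algebras over $\K$; thus $r$ is an $S$-bimodule with $r\cdot r=0$. Let $H$ be the triangular matrix algebra $\left(\begin{smallmatrix} S & 0\\ r & S\end{smallmatrix}\right)$. Then $H$ is hereditary, its Gabriel quiver is exactly the separated quiver of $\Lambda$ --- a bipartite quiver with all arrows from the ``unprimed'' vertices to the ``primed'' vertices, prescribed by $r$ --- and in particular the underlying graph of $H$ is $\Gamma_{\Lambda}$.

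Next I would set up a comparison between $\mathrm{mod}\,\Lambda$ and $\mathrm{mod}\,H$. An $H$-module is the same datum as a triple $(X,Y,f)$ with $X,Y$ left $S$-modules and $f\colon r\otimes_{S}X\to Y$ an $S$-linear map, i.e.\ a representation of the separated quiver. Because $r^{2}=0$, for $M\in\mathrm{mod}\,\Lambda$ both $M/rM$ and $rM$ are $S$-modules and multiplication induces a surjective $S$-map $\mu_{M}\colon r\otimes_{S}(M/rM)\to rM$; this defines a functor $F\colon M\mapsto(M/rM,\,rM,\,\mu_{M})$. In the other direction, $(X,Y,f)\mapsto X\oplus Y$, with $S$ acting diagonally and $r$ acting through $f$ so that $Y$ is killed (using a Wedderburn--Malcev splitting $\Lambda=S\oplus r$, which exists since $\mathrm{char}\,\K=0$), defines a functor $G$. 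Since $S$ is semisimple, the exact sequence $0\to rM\to M\to M/rM\to 0$ splits over $S$, and unravelling the definitions yields $GF\cong\mathrm{id}$ and that $F$ preserves indecomposability.

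The technical heart is the next step. For any representation $(X,Y,f)$ of the separated quiver one has $Y=\mathrm{im}\,f\oplus Z$ as $S$-modules, and then $(0,Z,0)$ is a direct summand of $(X,Y,f)$; hence an indecomposable $H$-module either has $f$ surjective onto $Y$ --- equivalently, lies in the essential image of $F$ --- or is a simple module supported at a single primed vertex. It follows that $F$ induces a bijection between the iso-classes of indecomposable $\Lambda$-modules and the iso-classes of indecomposable $H$-modules other than the finitely many simples at the primed vertices, and, being functorial and algebraic in the module, this bijection carries one-parameter families of indecomposables to one-parameter families in both directions. Therefore $\Lambda$ is of finite, resp.\ tame, resp.\ wild representation type if and only if $H$ is; this is the ``separation'' argument, for which one may consult \cite{ARS95}.

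Finally, since $H$ is hereditary over an algebraically closed field, I would invoke the classification of its representation type: a connected hereditary algebra is representation-finite iff its underlying graph is a Dynkin diagram of type $A$, $D$ or $E$ (Gabriel), tame and representation-infinite iff its underlying graph is an extended Dynkin (Euclidean) diagram (Nazarova; Donovan--Freislich), and wild otherwise. Applying this component by component to $H$ --- whose underlying graphs are exactly the connected components of $\Gamma_{\Lambda}$ --- and transporting the answer along the bijection above, one obtains precisely the assertion: $\Lambda$ is of finite representation type iff $\Gamma_{\Lambda}$ is a disjoint union of Dynkin diagrams, and of tame representation type iff $\Gamma_{\Lambda}$ is a disjoint union of Dynkin and Euclidean diagrams with at least one component of Euclidean type. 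I expect the main obstacle to be the third step --- verifying that $F$ is a genuine representation equivalence once the finitely many exceptional semisimple summands are discarded, and that it respects parametrised families --- while the first, second and fourth steps are respectively an easy structural observation, a routine unravelling, and a citation.
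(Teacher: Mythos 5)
The paper offers no proof of this proposition at all --- it is quoted as ``well-known'', with \cite{ARS95} as the ambient reference --- and your argument is precisely the standard one from Auslander--Reiten--Smal{\o} (Chapter X): pass to the hereditary triangular matrix algebra $\bigl(\begin{smallmatrix} S & 0\\ r & S\end{smallmatrix}\bigr)$ built from $S=\Lambda/\mathrm{rad}\,\Lambda$ and $r=\mathrm{rad}\,\Lambda$, whose Gabriel quiver is the separated quiver; observe that the only indecomposables missed by $M\mapsto (M/rM,\,rM,\,\mu_M)$ are the finitely many simples at primed vertices; and invoke Gabriel together with Nazarova and Donovan--Freislich. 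Your sketch is correct as written (including the correct flagging of the Wedderburn--Malcev splitting and of the compatibility with one-parameter families as the points needing care); the only residual issue is one of convention in the statement itself, namely what ``affine disjoint union of Dynkin diagrams'' means, which your reading --- Dynkin and Euclidean components with at least one Euclidean, under the convention that tame excludes finite type --- resolves in the standard way.
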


\section{The Hopf algebra $\C$ and Drinfeld double $D(\C)$}\label{secDrinfelddouble}
Throughout the paper, we fix $\xi$ a primitive $6$-th root of unity. For the classification of Hopf algebra of dimension $12$,
the semisimple case was classified by N.~Fukuda \cite{F97}, the pointed nonsemisimple case was given by Andruskiewitsch and
Natale \cite{AN01}, and the full classification was done by Natale \cite{Na02}. The main result is the following
\begin{thm} $\cite[Theorem\,0.1]{Na02}$
Let $H$ be a nonsemisimple Hopf algebra of dimension $12$ over $\K$. Then either $H$ or $H\As$ is pointed.
\end{thm}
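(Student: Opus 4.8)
The plan is to establish the classification theorem of Natale by a duality argument that reduces the nonsemisimple case to the pointed case already treated by Andruskiewitsch and Natale. First I would recall that for a finite-dimensional Hopf algebra $H$, the dual $H\As$ is again a Hopf algebra of the same dimension, and $H$ is cosemisimple if and only if $H\As$ is semisimple; moreover, by the Larson--Radford theorem (valid in characteristic zero), $H$ is semisimple if and only if it is cosemisimple, hence if and only if $H\As$ is semisimple. Thus the hypothesis that $H$ is nonsemisimple forces both $H$ and $H\As$ to be neither semisimple nor cosemisimple. The strategy is then to show that at least one of $H$, $H\As$ has the property that its coradical is a group algebra — equivalently is pointed — and then invoke \cite{AN01}.

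The key step is to analyze the coradical $H_0$, which is a cosemisimple subcoalgebra, so $H_0 = \K 1 \oplus (\text{sum of simple subcoalgebras})$. Each simple subcoalgebra has dimension $n^2$ for some $n \geq 1$; the $n=1$ summands correspond to grouplike elements, and the set $G(H)$ of grouplikes forms a group whose order divides $\dim H = 12$ by the Nichols--Zoeller theorem. Writing $\dim H_0 = |G(H)| + \sum n_i^2$ with each $n_i \geq 2$, and using that $\dim H_0 \leq \dim H = 12$ with $\dim H_0 < 12$ (since $H$ is not cosemisimple), I would run through the arithmetic possibilities. The main obstacle is controlling the interaction of these numerical constraints with the module-theoretic ones: one must also use the dual statement, namely that $|G(H\As)|$, the number of non-isomorphic one-dimensional $H$-modules, divides $12$, and that $H\As$ likewise has small coradical. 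Combining the constraints on $(|G(H)|, \{n_i\})$ for $H$ with those for $H\As$, together with the fact that $G(H)$ acts on the set of simple subcoalgebras and the Nichols--Zoeller freeness over the subalgebra and subcoalgebra generated by the coradical, should eliminate all cases in which neither $H$ nor $H\As$ is pointed, i.e. force some $H$ in the pair to have $H_0 = \K G(H)$.

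Once it is shown that (after possibly replacing $H$ by $H\As$) the coradical is a group algebra, $H$ is pointed and nonsemisimple of dimension $12$, so the classification of \cite{AN01} applies and identifies $H$ precisely; in particular one recovers that the only such Hopf algebra up to isomorphism is the one denoted $\C$ in this paper (equivalently its pointed dual). I would then remark that the semisimple case is disjoint and already handled by \cite{F97}, so the dichotomy ``$H$ or $H\As$ is pointed'' is exactly what remains, completing the proof. The delicate point throughout is the case analysis on the coradical dimensions: one must be careful that a simple subcoalgebra of dimension $4$ or $9$ does not by itself obstruct pointedness of the dual, and here the Nichols--Zoeller theorem applied to the (Hopf) subalgebra $\K G(H) \subseteq H$ — forcing $|G(H)| \in \{1,2,3,4,6,12\}$ — together with the parallel statement for $H\As$, is what makes the bookkeeping close.
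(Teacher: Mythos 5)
The paper does not prove this statement at all: it is quoted verbatim from \cite[Theorem 0.1]{Na02} and used as a black box, so there is no internal proof to compare against. Judged on its own, your proposal is a reasonable sketch of the \emph{strategy} of Natale's proof (reduce to the pointed case via \cite{AN01} and \cite{F97}, exploit Larson--Radford to see that $H$ and $H\As$ are both non(co)semisimple, and analyse the coradicals of $H$ and $H\As$), but it has a genuine gap at the decisive step. The claim that combining the numerical constraints on $\bigl(|G(H)|,\{n_i\}\bigr)$ and $\bigl(|G(H\As)|,\{m_j\}\bigr)$ ``should eliminate all cases in which neither $H$ nor $H\As$ is pointed'' is asserted, not argued, and it cannot be carried out by the arithmetic you describe. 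Indeed, the Hopf algebra $\C$ of this very paper has coradical $\K\oplus\K\oplus C\oplus D$ with $C,D$ simple of dimension $4$, so the configuration $|G(H)|=2$, $n_1=n_2=2$ is perfectly admissible for the coradical of a nonsemisimple Hopf algebra of dimension $12$; nothing in the Nichols--Zoeller divisibility or the inequality $\dim H_0\le 11$ prevents the \emph{same} configuration from occurring simultaneously for $H\As$. Equivalently: you must rule out that $H$ has both a simple module and a simple comodule of dimension $\ge 2$, and counting dimensions of simple subcoalgebras on both sides does not do this.

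What is actually needed, and what Natale uses, is structure theory beyond bookkeeping: the Nichols--Richmond theorem on Hopf algebras containing a simple subcoalgebra of dimension $4$ (which forces the existence of specific Hopf subalgebras, here of dimension $2$, $12$, \dots), the counting arguments of \cite{AN01} relating irreducible modules and comodules, the classification of semisimple Hopf algebras of dimension dividing $12$, and constraints coming from Radford's formula for $S^4$ and the trace of $S^2$. Without at least the dimension-$4$ simple subcoalgebra analysis, the hardest case (both $H_0$ and $(H\As)_0$ of type $1+1+4+4$ or $1+1+1+1+4$, say) remains open, so the proof does not close. A secondary slip: after reducing to the pointed case, \cite{AN01} yields \emph{four} pointed nonsemisimple Hopf algebras of dimension $12$ (the $\A_i$, $\BN_i$ listed in Section 3), not a single one; $\C=\A_1\As$ is merely the unique member of the family that is itself non-pointed.
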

The following list is (up to isomorphism) all the pointed Hopf algebras of dimension $12$ over $\K$.
\begin{align*}
\A_0:&=\langle g,x\mid g^6=1, x^2=0, gx=-xg\;\rangle;  \De(g)=g\otimes g, \De(x)=x\otimes 1+g\otimes x.\\
\A_1:&=\langle g,x\mid g^6=1, x^2=1{-}g^2, gx=-xg\;\rangle;  \De(g)=g\otimes g, \De(x)=x{\otimes} 1+g{\otimes} x.\\
\BN_0:&=\langle g,x\mid g^6=1, x^2=0, gx=-xg\;\rangle;  \De(g)=g\otimes g, \De(x)=x\otimes 1+g^3\otimes x.\\
\BN_1:&=\langle g,x\mid g^6=1, x^2=0, gx=\xi xg\;\rangle;  \De(g)=g\otimes g, \De(x)=x\otimes 1+g^3\otimes x.
\end{align*}
We also have that $\A_0\As\cong \BN_1$ and $\BN_0$ is self-dual. In particular, the Hopf algebra $\C:=\A_1^{\ast}$ is
the unique Hopf algebra of dimension $12$, which is neither pointed nor semisimple nor has the Chevalley property.
Moreover, $\C$ is generated as an algebra by its simple subcoalgebra, and its coradical $\C_0\simeq \K\oplus \K \oplus C\oplus D$, where $C$ and $D$ are simple coalgebras of dimension $4$.

In order to give the Hopf algebra structure of $\C$ explicitly, we first describe the simple representations of $\A_1$.
\begin{lem}\label{lem1}
The simple one-dimensional representations of $\A_1$ are $\epsilon$ and $\chi:\A_1\mapsto \K$, where
\begin{align}
\chi(g)=-1,\quad \chi(x)=0.
\end{align}
The simple two-dimensional representations of $\A_1$ on a given basis are $\rho_1, \rho_2:\A_1\mapsto \M(2,\K)$, where
\begin{align}
    \rho_1(g)&=\left(\begin{array}{ccc}
                                   \xi & 0\\
                                   0 & -\xi
                                 \end{array}\right),\quad
    \rho_1(x)=\left(\begin{array}{ccc}
                                   0 & 1-\xi\\
                                   1+\xi & 0
                                 \end{array}\right);\\
    \rho_2(g)&=\left(\begin{array}{ccc}
                                   -\xi^{-1} & 0\\
                                   0 & \xi^{-1}
                                 \end{array}\right),\quad
    \rho_2(x)=\left(\begin{array}{ccc}
                                   0 & 1-\xi^{-1}\\
                                   1+\xi^{-1} & 0
                                 \end{array}\right).
\end{align}
\end{lem}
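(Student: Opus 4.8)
The plan is a direct analysis keyed to the group-like element $g$. Since $g^6=1$ and $\mathrm{char}\,\K=0$, the subalgebra $\K\langle g\rangle\cong\K[t]/(t^6-1)\cong\K^6$ is semisimple, so on every finite-dimensional $\A_1$-module $M$ the operator $g$ is diagonalizable with eigenvalues among the sixth roots of unity and $M$ splits as a direct sum of $g$-eigenspaces. First I would check that the four assignments in the statement respect the defining relations $g^6=1$, $x^2=1-g^2$, $gx=-xg$; this is a short matrix computation — for instance $\rho_1(x)^2=(1-\xi)(1+\xi)I=(1-\xi^2)I=I-\rho_1(g)^2$, while $\rho_1(g)\rho_1(x)+\rho_1(x)\rho_1(g)=0$ because $\rho_1(g)$ is diagonal with opposite diagonal entries, and analogously for $\rho_2,\epsilon,\chi$. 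Simplicity of $\rho_1$ and $\rho_2$ then follows since each $\rho_i(g)$ has two distinct eigenvalues, so a proper nonzero submodule must be one of the two coordinate lines; but $\rho_i(x)$ sends each coordinate line onto the other (its off-diagonal entries $1\pm\xi$, resp.\ $1\pm\xi^{-1}$, are nonzero), so no such line is $x$-stable. The four are pairwise non-isomorphic: $\epsilon,\chi$ are one-dimensional and $\rho_1,\rho_2$ two-dimensional, while $g$ acts with distinct eigenvalue multisets in each pair ($\{1\}$ vs.\ $\{-1\}$, resp.\ $\{\xi,-\xi\}$ vs.\ $\{-\xi^{-1},\xi^{-1}\}=\{\xi^2,-\xi^2\}$).

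The bulk of the work is exhaustiveness. Given a simple module $M$, pick $0\ne v\in M$ with $gv=\lambda v$, $\lambda^6=1$. From $gx=-xg$ one gets $g(xv)=-\lambda(xv)$, and from $x^2=1-g^2$ one gets $x(xv)=(1-\lambda^2)v$; hence $\K v+\K xv$ is a submodule, so $M=\K v+\K xv$ and $\dim M\le 2$. If $\dim M=1$ then $x$ acts by a scalar $\mu$, and $gx=-xg$ forces $\mu=0$, whereupon $x^2=1-g^2$ forces $\lambda^2=1$, producing exactly $\epsilon$ and $\chi$. If $\dim M=2$ then $xv\ne0$ (else $0=x^2v=(1-\lambda^2)v$ would give $\lambda^2=1$ and $M=\K v$), so $\{v,xv\}$ is a basis with $g=\mathrm{diag}(\lambda,-\lambda)$ and $x=\bigl(\begin{smallmatrix}0&1-\lambda^2\\1&0\end{smallmatrix}\bigr)$; by the same submodule analysis this module is simple iff $1-\lambda^2\ne0$, i.e.\ $\lambda\ne\pm1$. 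Interchanging the roles of $v$ and $xv$ only relabels the two eigenvalues, so the isomorphism class depends solely on the unordered pair $\{\lambda,-\lambda\}$, and the admissible pairs are exactly $\{\xi,-\xi\}$ and $\{\xi^2,-\xi^2\}=\{-\xi^{-1},\xi^{-1}\}$; rescaling the second basis vector by $(1+\xi)^{-1}$, resp.\ $(1+\xi^{-1})^{-1}$, brings $x$ into the form displayed in the statement and identifies these modules with $\rho_1$ and $\rho_2$. Hence $\{\epsilon,\chi,\rho_1,\rho_2\}$ is the complete list of simples.

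The main obstacle here is bookkeeping rather than anything conceptual: one must keep careful track of how the involution $\lambda\mapsto-\lambda$ partitions the sixth roots of unity (so that precisely the pair $\{1,-1\}$ is excluded in the two-dimensional case), and check that the change of basis yielding the exact off-diagonal entries $1-\xi,1+\xi$ (resp.\ $1-\xi^{-1},1+\xi^{-1}$) is available. As a consistency check one notes $\dim\A_1-(1^2+1^2+2^2+2^2)=2$, matching the fact that $\A_1$ is non-semisimple with a two-dimensional Jacobson radical.
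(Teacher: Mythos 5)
Your proof is correct and follows essentially the same route as the paper's: diagonalize the action of the group-like $g$, use $gx=-xg$ and $x^2=1-g^2$ to force the off-diagonal form of $x$ and the constraint $\lambda^2\neq 1$ in the two-dimensional case, and sort the surviving eigenvalue pairs $\{\lambda,-\lambda\}$ into the two isomorphism classes $\rho_1,\rho_2$. The one genuine addition is your cyclic-submodule observation that $\K v+\K xv$ is a submodule, which bounds every simple by dimension $2$ — a point the paper's proof does not make explicit — but the computations are otherwise identical.
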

\begin{proof}
Let $\beta\in G(\A_1\As)=\hom(\A_1,\K)$. Since $g^6=1$ and $gx=-xg$, $\beta(g)$ is a $6$-th root of unity and thus $\beta(x)=0$. From the relation $x^2=1-g^2$, we have that $\beta(g)^2=1$, i.e., $\beta(g)=\pm 1$. Thus the simple modules of $\A_1$ are given by $\epsilon$ and $\chi$.

For the two-dimensional simple modules, since $g^6=1$, we can choose a basis such that the matrices defining the module action are of the form
\begin{align*}
    \rho(g)&=\left(\begin{array}{ccc}
                                   g_1 & 0\\
                                   0 &   g_2
                                 \end{array}\right),\quad
    \rho(x)=\left(\begin{array}{ccc}
                                   x_1 & x_2\\
                                   x_3 & x_4
                                 \end{array}\right),
\end{align*}
where $g_1^6=g_2^6=1$. From the relation $gx=-xg$, it follows that $x_1=0=x_3$ and $(g_1+g_2)x_3=0=(g_1+g_2)x_2$. If $g_1+g_2\neq 0$, then $x_3=0=x_2$, which implies that $\rho(x)=0$, a contradiction. Thus we have $g_1=-g_2$. Moreover, $x_2\neq 0$ and $x_3\neq 0$ since the representation is simple. By the relation $x^2=1-g^2$, we have that $x_2x_3=1-g_1^2$. Since $x_2x_3\neq 0$, $g_1^2\neq 1$, which implies $g_1\neq \xi^3$. If $g_1=\xi$ or $\xi^4$, then the module is isomorphic to $\rho_1$. If $g_1=\xi^5$ or $\xi^2$, then the module is isomorphic to $\rho_2$.
\end{proof}
Let $(\K^2, \rho_i)_{i=1,2}$ be the $2$-dimensional representations given in Lemma $\ref{lem1}$. Let ${(E_{ij})}_{i,j=1,2}$ be the coordinate functions of $\M(2,\K)$. And let $C_{ij}:=E_{ij}\circ \rho_1, D_{ij}:=E_{ij}\circ \rho_2$, we can regard $\E_C:=\{C_{ij}\}_{i,j=1,2}$ and $\E_D:=\{D_{ij}\}_{i,j=1,2}$ as comatrix basis of the simple subcoalgebras of $\C$ isomorphic to $C$ and $D$, respectively.
\begin{lem}
The elements of $\E_C$ and $\E_D$ satisfy:
\begin{align*}
S(C_{12})&=D_{12},\quad S(C_{21})=D_{21},\quad S(C_{11})=D_{22},\quad S(C_{22})=D_{11},\\
S(D_{12})&=-C_{12},\quad S(D_{21})=-C_{21},\quad S(D_{11})=C_{22},\quad S(D_{22})=C_{11};\\
C_{11}^3&=\chi,\quad C_{22}^3=\epsilon, \quad C_{11}C_{22}=C_{22}C_{11}, \quad C_{11}^2C_{22}=\epsilon,\\
C_{12}^2&=0=C_{21}^2,\quad C_{12}C_{21}=0=C_{21}C_{12},\\
C_{11}C_{12}&=\xi C_{12}C_{11},\quad C_{11}C_{21}=\xi C_{21}C_{11},\\
C_{11}C_{12}&=\Lam C_{22}C_{21},\quad C_{11}C_{21}=\Lam^{-1}C_{22}C_{12};\\
\De(C_{11})&=C_{11}\otimes C_{11}+C_{12}\otimes C_{21},\quad \De(C_{12})=C_{11}\otimes C_{12}+C_{12}\otimes C_{22},\\
\De(C_{21})&=C_{21}\otimes C_{11}+C_{22}\otimes C_{21},\quad \De(C_{22})=C_{21}\otimes C_{12}+C_{22}\otimes C_{22};\\
\epsilon(C_{11})&=\epsilon(C_{22})=1,  \quad \epsilon(C_{12})=\epsilon(C_{21})=0;\\
S(C_{11})&=C_{11}^5,\quad S(C_{12})=\Lam\xi C_{22}C_{21},\\
S(C_{21})&=\Lam^{-1}\xi^{-2}C_{22}C_{12},\quad
S(C_{22})=C_{11}^2,
\end{align*}
where $\Lam=(\xi-1)(\xi+1)^{-1}$.
\end{lem}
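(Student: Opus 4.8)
The plan is to work throughout inside $\C=\A_1\As$, where the product is the convolution dual to $\De_{\A_1}$, the coproduct is dual to the product of $\A_1$, the counit is $f\mapsto f(1)$, and $S_{\C}(f)=f\circ S_{\A_1}$. Two functionals on $\A_1$ coincide iff they agree on the basis $\{g^{k},g^{k}x:0\le k\le5\}$, so every identity in the statement can be checked by evaluating both sides there, using the matrices of $\rho_1,\rho_2$ from Lemma~\ref{lem1} together with $\De(g^{k}x)=g^{k}x\otimes g^{k}+g^{k+1}\otimes g^{k}x$, $S(g)=g^{-1}$, $S(x)=-g^{-1}x$ and $xg^{-1}=-g^{-1}x$ in $\A_1$. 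The coalgebra block is then immediate: since $\rho_1$ is an algebra map with $\rho_1(1)=\operatorname{id}$, the relation $\rho_1(hh')=\rho_1(h)\rho_1(h')$ gives at once $\De(C_{ij})=\sum_{k}C_{ik}\otimes C_{kj}$ and $\epsilon(C_{ij})=\delta_{ij}$, i.e.\ $\E_C$ (and likewise $\E_D$) is a comatrix basis.

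Next I would treat the relations linking $\E_C$, $\E_D$ and the antipode. The key observation is that $\rho_2$ is, up to a relabelling of the basis, the left dual of $\rho_1$: a direct check on the generators gives $\rho_2(h)=P\,\rho_1(S(h))^{\mathrm{T}}P^{-1}$, where $P$ is the $2\times2$ permutation matrix. Reading off matrix entries then yields $D_{ij}=S_{\C}(C_{\tau(j)\tau(i)})$ with $\tau$ the transposition $1\leftrightarrow2$, which is exactly $S(C_{11})=D_{22}$, $S(C_{22})=D_{11}$, $S(C_{12})=D_{12}$, $S(C_{21})=D_{21}$. Applying $S_{\C}$ once more and using $S_{\A_1}^{2}=\operatorname{Ad}(g^{3})$ --- so that, since $g^{3}xg^{-3}=-x$, $S_{\C}^{2}$ fixes $C_{11},C_{22}$ and acts by $-1$ on $C_{12},C_{21}$ --- produces the $S(D_{ij})$ block, with the sign in $S(D_{12})=-C_{12}$, $S(D_{21})=-C_{21}$ accounted for.

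It remains to handle the algebra relations among the $C_{ij}$. I would first establish by a direct convolution computation on the twelve basis elements the three core identities $C_{11}^{3}=\chi$ (hence $C_{11}^{6}=\epsilon$, using $\chi^{2}=\epsilon$), $C_{12}^{2}=0$ and $C_{11}C_{12}=\xi C_{12}C_{11}$; a Vandermonde rank argument then shows $\{C_{11}^{i}C_{12}^{j}:0\le i\le5,\ j\in\{0,1\}\}$ is a basis of $\C$, so $\C=\langle C_{11},C_{12}\rangle$ as an algebra. A short further computation identifies $C_{22}=C_{11}^{4}$ (equivalently $C_{11}^{2}C_{22}=\epsilon=C_{22}^{3}$ and $C_{11}C_{22}=C_{22}C_{11}$) and $C_{21}=\Lam^{-1}C_{11}^{3}C_{12}$, the scalar $\Lam=(\xi-1)(\xi+1)^{-1}$ entering through the ratio of the off-diagonal entries $1-\xi$ and $1+\xi$ of $\rho_1(x)$. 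Every remaining relation --- $C_{12}C_{21}=C_{21}C_{12}=C_{21}^{2}=0$, $C_{11}C_{21}=\xi C_{21}C_{11}$, $C_{11}C_{12}=\Lam C_{22}C_{21}$, $C_{11}C_{21}=\Lam^{-1}C_{22}C_{12}$, and the last antipode row $S(C_{11})=C_{11}^{5}$, $S(C_{22})=C_{11}^{2}$, $S(C_{12})=\Lam\xi C_{22}C_{21}$, $S(C_{21})=\Lam^{-1}\xi^{-2}C_{22}C_{12}$ --- is then obtained by expressing both sides in the monomial basis via $C_{11}^{6}=\epsilon$, $C_{12}^{2}=0$, $C_{11}C_{12}=\xi C_{12}C_{11}$ and anti-multiplicativity of $S$, and comparing coefficients.

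The main obstacle is not a single conceptual step but the sheer volume of sign- and root-of-unity bookkeeping: each convolution product must be evaluated on all twelve basis vectors of $\A_1$ and matched against products of the target monomials, and a single stray power of $\xi$ propagates through the whole list. The one genuinely delicate point is pinning down the contragredient identification $\rho_2\cong{}\As\rho_1$ correctly --- the transpose-and-swap normalization, and the sign in the $S(D_{ij})$ relations coming from $S_{\A_1}^{2}=\operatorname{Ad}(g^{3})$ --- since everything tying $\E_C$ to $\E_D$ rests on it.
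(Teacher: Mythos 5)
Your strategy is correct and rests on the same engine as the paper's proof: realize every identity as an equality of functionals on $\A_1$ and evaluate on the basis $\{g^k,g^kx\}$ using $S(g)=g^{-1}$, $S(x)=-g^{-1}x$ and $\Delta(g^kx)=g^kx\otimes g^k+g^{k+1}\otimes g^kx$. The differences are organizational rather than conceptual, but they are worth recording. For the block tying $\E_C$ to $\E_D$, the paper checks $S(C_{11})=D_{22}$, $S(C_{22})=D_{11}$ entry by entry and disposes of the remaining six identities with a ``similarly''; you instead package all eight at once through the contragredient identification $\rho_2(h)=P\rho_1(S(h))^{\mathrm T}P^{-1}$ and then get the $S(D_{ij})$ row from $S_{\A_1}^2=\mathrm{Ad}(g^3)$ (which indeed fixes $C_{11},C_{22}$ and negates $C_{12},C_{21}$, since $\rho_1(g^3)=\mathrm{diag}(-1,1)$). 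For the multiplicative block, the paper verifies $C_{11}^3=\chi$, $C_{12}^2=0$, $C_{11}C_{12}=\xi C_{12}C_{11}$ and $C_{11}C_{12}=\Lam C_{22}C_{21}$ by direct convolution and declares ``the rest claims are clear,'' whereas you prove only the three core relations by convolution, show $\{C_{11}^iC_{12}^j\}$ is a basis by a Vandermonde argument, and reduce everything else (including the final antipode row, which the paper never computes explicitly) to the identifications $C_{22}=C_{11}^4$ and $C_{21}=\Lam^{-1}C_{11}^3C_{12}$; I checked these two identities and the resulting derivations of $C_{11}C_{21}=\xi C_{21}C_{11}$, $C_{12}C_{21}=C_{21}C_{12}=C_{21}^2=0$ and $S(C_{12})=\Lam\xi C_{22}C_{21}$, and they are all consistent. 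Your version buys a cleaner audit trail for the many root-of-unity coefficients at the cost of two extra structural lemmas; the paper's version is shorter on paper only because it leaves most of the verification implicit.
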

\begin{proof}
Note that $\{g^i,\,g^ix\}_{i=0}^5$ is a basis of $\A_1$, $S(g)=g^{-1}$ and $S(x)=-g^{-1}x=xg^{-1}$. Since the antipode of $\C$ is defined by $S(c)=c\circ S_{\A_1}$, we have
\begin{align*}
\langle S(C_{11}), g^i\rangle&=\langle C_{11}, S(g^i)\rangle=\langle E_{11}\rho_1, S(g^i)\rangle =\xi^{-i}=\langle D_{22}, g^i\rangle,\\
\langle S(C_{11}), g^ix\rangle&=\langle C_{11}, S(g^ix)\rangle=\langle E_{11}\rho_1, S(g^ix)\rangle  =0=\langle D_{22}, g^ix\rangle,\\
\langle S(C_{22}), g^i\rangle&=\langle C_{22}, S(g^i)\rangle=\langle E_{11}\rho_1, S(g^i)\rangle  =(-\xi^{-1})^{i}=\langle D_{11}, g^i\rangle,\\
\langle S(C_{22}), g^ix\rangle&=\langle C_{22}, S(g^ix)\rangle=\langle E_{11}\rho_1, S(g^ix)\rangle =0=\langle D_{11}, g^ix\rangle.
\end{align*}
Then $S(C_{11})=D_{22}$, $S(C_{22})=D_{11}$.  Similarly, we have  $S(C_{12})=D_{12}, S(C_{21})=D_{21}, S(C_{11})=D_{22}, S(C_{22})=D_{11}, S(D_{12})=-C_{12}, S(D_{21})=-C_{21}$.

Since the multiplication of $\C$ is defined by the convolution of $\hom(\A_1,\K)$, and note that
$\Delta(g^i)=g^i\otimes g^i$, $\Delta(g^ix)=g^ix\otimes g^i+g^{i+1}\otimes g^ix$. Clearly, we have
\begin{align*}
\langle C_{11}^3, g^i\rangle&=\langle C_{11}, g^i\rangle \langle C_{11}, g^i\rangle\langle C_{11}, g^i\rangle=(-1)^i=\chi(g^i),\\
\langle C_{11}^3, g^ix\rangle&=\langle C_{11}, g^ix\rangle \langle C_{11}, g^i\rangle\langle C_{11}, g^i\rangle+\langle C_{11}, g^{i+1}\rangle \langle C_{11}, g^ix\rangle\langle C_{11}, g^i\rangle\\&\quad+\langle C_{11}, g^{i+1}\rangle \langle C_{11}, g^{i+1}\rangle\langle C_{11}, g^ix\rangle=0=\chi(g^i).
\end{align*}
Then $C_{11}^3=\chi$. Similarly, we can prove $C_{22}^3=\epsilon$, $C_{11}C_{22}=C_{22}C_{11}$, $C_{11}^3C_{22}=C_{22}C_{11}^3=1$.
After a similar computation as above, it follows that $C_{12}^2=0=C_{21}^2, C_{12}C_{21}=0=C_{21}C_{12}$.
It is clear that $\langle C_{11}C_{12}, g^i\rangle=0=\langle C_{12}C_{11}, g^i\rangle=0$. And
\begin{align*}
\langle C_{11}C_{12}, g^ix\rangle
&=\langle C_{11}, g^ix\rangle \langle C_{12}, g^i\rangle+\langle C_{11}, g^{i+1}\rangle \langle C_{12}, g^ix\rangle\\
&=\langle C_{11}, g^{i+1}\rangle \langle C_{12}, g^ix\rangle=\xi^{i+1}\langle C_{12}, g^ix\rangle,\\
\langle C_{12}C_{11}, g^ix\rangle
&=\langle C_{12}, g^ix\rangle \langle C_{11}, g^i\rangle+\langle C_{12}, g^{i+1}\rangle \langle C_{11}, g^ix\rangle\\
&=\langle C_{11}, g^{i}\rangle \langle C_{12}, g^ix\rangle=\xi^i\langle C_{12}, g^ix\rangle.
\end{align*}
Thus we have $C_{11}C_{12}=\xi C_{12}C_{11}$. Similarly, we have $C_{11}C_{21}=\xi C_{12}C_{21}$. Now we can claim that $C_{11}C_{12}=\Lam C_{22}C_{21}$.
Indeed, $\langle C_{22}C_{21}, g^i\rangle=0$ and
\begin{align*}
\langle C_{22}C_{21}, g^ix\rangle
&=\langle C_{22}, g^ix\rangle \langle C_{21}, g^i\rangle+\langle C_{22}, g^{i+1}\rangle \langle C_{21}, g^ix\rangle\\
&=\langle C_{22}, g^{i+1}\rangle \langle C_{21}, g^ix\rangle
=(-\xi)^{i+1}\langle C_{21}, g^ix\rangle\\
&=(1+\xi)(\xi-1)^{-1}\xi^{i+1}\langle C_{12}, g^ix\rangle\\
&=(1+\xi)(\xi-1)^{-1}\langle C_{11}C_{12}, g^ix\rangle.
\end{align*}
Similarly, $C_{22}C_{12}=\Lam C_{11}C_{21}$. And the rest claims are clear.
\end{proof}

Since $\C$ can be generated by its simple subcoalgebra, and from the above lemma, let $a=S(C_{11})$, $b=\Lam S(C_{21})$, we have the following
\begin{pro}\label{proStrucOfC}
\begin{enumerate}
  \item $\C$ is generated as an algebra by the elements $a$ and $b$ satisying
  $$
   a^6=1,\quad b^2=0,\quad ba=\xi ab.
  $$
  \item $A$ basis of $\C$ as vector space is given by
  \begin{align*}
  \{1, a, a^2, a^3, a^4, a^5, b, ba, ba^2, ba^3, ba^4, ba^5\}.
  \end{align*}
  \item The coalgebra structure is given by
  \begin{align*}
  \De(1)&=1\otimes 1,\quad
  \De(a)=a\otimes a+ \Lam^{-1}b\otimes ba^3,\\
  \De(a^2)&=a^2\otimes a^2+\xi^{-1}\Lam^{-1}ba\otimes ba^4,\quad
  \De(a^3)=a^3\otimes a^3,\\
  \De(a^4)&=a^4\otimes a^4+\Lam^{-1}ba^3\otimes b,\quad
  \De(a^5)=a^5\otimes a^5+\xi^{-1}\Lam^{-1}ba^4\otimes ba,\\
  \De(b)&=b\otimes a^4+a\otimes b,\quad
  \De(ba)=ba\otimes a^5+a^2\otimes ba,\\
  \De(ba^2)&=a^3\otimes ba^2+ba^2\otimes 1,\quad
  \De(ba^3)=ba^3\otimes a+a^4\otimes ba^3,\\
  \De(ba^4)&=a^5\otimes ba^4+ba^4\otimes a^2,\quad
  \De(ba^5)=ba^5\otimes a^3+1\otimes ba^5,\\
  \epsilon(a)&=1,\quad\epsilon(b)=0.
  \end{align*}
  \item The antipode is given by
  \begin{align*}
  S(a)=a^5, \quad S(b)=\xi^{-2}ba.
  \end{align*}
\end{enumerate}
\end{pro}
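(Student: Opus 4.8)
The plan is to carry everything over from the identities established in the preceding lemma for the comatrix bases $\E_C,\E_D$. First I would set up the dictionary between $a,b$ and the $C_{ij}$. Since $a=S(C_{11})=C_{11}^{5}$ and $C_{11}^{6}=(C_{11}^{3})^{2}=\chi^{2}=\epsilon$, we get $C_{11}=a^{-1}=a^{5}$; from $C_{11}^{2}C_{22}=\epsilon$ then $C_{22}=a^{2}$; and from $b=\Lam S(C_{21})=\xi^{-2}C_{22}C_{12}=\xi^{-2}\Lam\,C_{11}C_{21}$ (using $C_{22}C_{12}=\Lam C_{11}C_{21}$) one solves $C_{12}=\xi^{2}a^{4}b$ and $C_{21}=\xi^{2}\Lam^{-1}ab$. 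Because $\C$ is generated as an algebra by its simple subcoalgebras $C,D$, and the lemma expresses every $D_{ij}$ through the $C_{ij}$ via the antipode, $\C$ is generated by $\E_C$ alone, hence by $a$ and $b$.

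Next I would verify the three defining relations straight from the lemma: substituting $a=C_{11}^{5}$ gives $a^{6}=C_{11}^{30}=1$; writing $b=\xi^{-2}\Lam C_{11}C_{21}$ and using $C_{11}C_{21}=\xi C_{21}C_{11}$ and $C_{21}^{2}=0$ shows $b^{2}$ is a scalar multiple of $C_{11}^{2}C_{21}^{2}=0$; and $C_{11}C_{12}=\xi C_{12}C_{11}$ together with the fact that $C_{22}$ is a power of $C_{11}$ gives $ba=\xi ab$. To see that these are all the relations, put $A'=\langle a,b\mid a^{6}=1,\ b^{2}=0,\ ba=\xi ab\rangle$; moving every $b$ to the left and reducing by $b^{2}=0$ and $a^{6}=1$ shows $A'$ is spanned by $\{a^{i},\,ba^{i}:0\le i\le5\}$, so $\dim A'\le12$. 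The surjection $A'\twoheadrightarrow\C$ from the previous step plus $\dim\C=12$ forces an isomorphism, which proves (1) and simultaneously gives the basis in (2).

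For the coalgebra structure and the antipode, the point is that $\De$ is an algebra map and $S$ an algebra anti-map, so only $\De(a),\De(b),S(a),S(b)$ need to be computed; the rest of (3) follows from $\De(a^{i})=\De(a)^{i}$ and $\De(ba^{i})=\De(b)\De(a)^{i}$. Using that the antipode is an anti-morphism of coalgebras, $\De\circ S=(S\otimes S)\circ\tau\circ\De$ with $\tau$ the flip, and the comatrix coproducts from the lemma, one obtains $\De(a)=\De(S(C_{11}))=a\otimes a+\Lam^{-1}b\otimes S(C_{12})$ and $\De(b)=\Lam\,\De(S(C_{21}))=a\otimes b+b\otimes S(C_{22})$. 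Then $S(C_{22})=C_{11}^{2}=a^{4}$, while $S(C_{12})=\Lam\xi C_{22}C_{21}=\xi C_{11}C_{12}=\xi^{3}a^{3}b=-a^{3}b=ba^{3}$, the final equalities coming from the $a,b$-expressions of $C_{11},C_{12}$ and from $a^{3}b=-ba^{3}$ (a consequence of $ba=\xi ab$); this is exactly (3). Finally $S(a)=S(C_{11}^{5})=S(C_{11})^{5}=a^{5}$, and $S(b)=\xi^{-2}S(C_{22}C_{12})=\xi^{-2}S(C_{12})S(C_{22})=\xi^{-2}(ba^{3})a^{4}=\xi^{-2}ba$.

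I do not expect a genuine obstacle: the whole argument is bookkeeping with the lemma. The two places needing a little care are confirming that the presentation in (1) is complete, which is precisely why the count $\dim A'\le12$ is included, and keeping track of the sixth-root-of-unity arithmetic (notably $\xi^{3}=-1$ and $1+\xi^{2}+\xi^{4}=0$) when commuting $a$ past $b$ and when simplifying the coproducts. Routing $\De(a)$ through the anti-coalgebra-morphism property of $S$, instead of expanding $\De(C_{11})^{5}=(C_{11}\otimes C_{11}+C_{12}\otimes C_{21})^{5}$ by brute force, is what keeps that step short.
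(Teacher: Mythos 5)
Your proposal is correct and follows exactly the route the paper intends: the paper states Proposition 3.4 without proof, merely noting that it follows from the preceding lemma after setting $a=S(C_{11})$, $b=\Lam S(C_{21})$, and your argument (the dictionary $C_{11}=a^5$, $C_{22}=a^2$, $C_{12}=\xi^2a^4b$, $C_{21}=\xi^2\Lam^{-1}ab$, the dimension count against the presented algebra, and the computation of $\De(a)$, $\De(b)$, $S(a)$, $S(b)$ via the anti-(co)algebra properties of $S$) is precisely the bookkeeping the authors leave implicit. All the identities you use check out, including the cyclotomic relations $\xi^2-\xi+1=0$ and $\xi^3=-1$ needed to match the stated coefficients.
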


\begin{rmk}
$\C$ as an algebra, is isomorphic to $\BN_1$, but the coalgebra structure is more complicated. Indeed, by \cite[Theorem A.1]{Ma08}, $\A_1$ is a Hopf 2-cocycle deformation of $\A_0$, thus as the dual of $\A_1$, $\C$ is naturally the Drinfeld twist deformation of $\BN_1$ since $\A_0\As\cong \BN_1$ as Hopf algebras.
\end{rmk}
\begin{rmk}
Note that $a^3$ is a group-like element and $ba^2$, $ba^5$ are skew-primitive. In particular, the sub-algebra generated by $a^3$ and $ba^2$ is a Hopf subalgebra which is isomorphic to the $4$-dimensional Sweedler Hopf algebra.
\end{rmk}
\begin{rmk}\begin{enumerate}
             \item Denote the basis of $\C^{\ast}$ dual to the basis of $\C$  by
\begin{align*}
\{1\As,\Aa,\Ab,\Ac,\Ad,\Ae,\B,\BAa,\BAb,\BAc,\BAd,\BAe\}.
\end{align*}
From the multiplication table induced by the relations of $\C$, we have
\begin{align*}
\De(1\As)&=1\As\otimes 1\As+\Aa\otimes \Ae+\Ae\otimes\Aa+\Ab\otimes\Ac\\
         &\quad+\Ac\otimes\Ab+\Ac\otimes\Ac,\\
\De(\Aa)&=\Aa\otimes 1\As+1\As\otimes\Aa+\Ab\otimes\Ae+\Ae\otimes\Ab\\
        &\quad+\Ac\otimes\Ad+\Ad\otimes\Ac,\\
\De(\Ab)&=1\As\otimes\Ab+\Ab\otimes 1\As+\Aa\otimes\Aa+\Ac\otimes\Ae\\
        &\quad+\Ae\otimes\Ac+\Ad\otimes\Ad,\\
\De(\Ac)&=1\As\otimes\Ac+\Ac\otimes 1\As+\Aa\otimes\Ab+\Ab\otimes\Aa\\
        &\quad+\Ad\otimes\Ae+\Ae\otimes\Ad,\\
\De(\Ad)&=1\As\otimes\Ad+\Ad\otimes 1\As+\Aa\otimes\Ac+\Ac\otimes\Aa\\
        &\quad+\Ab\otimes\Ab+\Ae\otimes\Ae,\\
\De(\Ae)&=1\As\otimes\Ae+\Ae\otimes 1\As+\Aa\otimes\Ad+\Ad\otimes\Ae\\
        &\quad+\Ab\otimes\Ac+\Ac\otimes\Ab,\\
\De(\B)&=1\As\otimes\B+\B\otimes 1\As+\xi^{-1}\Aa\otimes\BAe+\BAe\otimes\Aa\\
       &\quad+\xi^{-2}\Ab\otimes\BAd+\BAd\otimes\Ab+\xi^{-3}\Ac\otimes\BAc\\
       &\quad+\BAc\otimes\Ac+\xi^{-4}\Ad\otimes\BAb+\BAb\otimes\Ad\\
       &\quad+\xi^{-5}\Ae\otimes\BAa+\BAa\otimes\Ae,\\
\De(\BAa)&=1\As\otimes\BAa+\BAa\otimes 1\As+\xi^{-1}\Aa\otimes\B+\B\otimes\Aa\\
       &\quad+\xi^{-2}\Ab\otimes\BAe+\BAe\otimes\Ab+\xi^{-3}\Ac\otimes\BAd\\
       &\quad+\BAd\otimes\Ac+\xi^{-4}\Ad\otimes\BAc+\BAc\otimes\Ad\\
       &\quad+\xi^{-5}\Ae\otimes\BAb+\BAb\otimes\Ae,
\end{align*}
\begin{align*}
\De(\BAb)&=1\As\otimes\BAb+\BAb\otimes 1\As+\xi^{-1}\Aa\otimes\BAa+\BAa\otimes\Aa\\
       &\quad+\xi^{-2}\Ab\otimes\B+\B\otimes\Ab+\xi^{-3}\Ac\otimes\BAe\\
       &\quad+\BAe\otimes\Ac+\xi^{-4}\Ad\otimes\BAd+\BAd\otimes\Ad\\
       &\quad+\xi^{-5}\Ae\otimes\BAc+\BAc\otimes\Ae,     \\
\De(\BAc)&=1\As\otimes\BAc+\BAc\otimes 1\As+\xi^{-1}\Aa\otimes\BAb+\BAb\otimes\Aa\\
       &\quad+\xi^{-2}\Ab\otimes\BAe+\BAe\otimes\Ab+\xi^{-3}\Ac\otimes\B\\
       &\quad+\B\otimes\Ac+\xi^{-4}\Ad\otimes\BAe+\BAe\otimes\Ad\\
       &\quad+\xi^{-5}\Ae\otimes\BAd+\BAd\otimes\Ae,\\
\De(\BAd)&=1\As\otimes\BAd+\BAd\otimes 1\As+\xi^{-1}\Aa\otimes\BAc+\BAc\otimes\Aa\\
       &\quad+\xi^{-2}\Ab\otimes\BAb+\BAb\otimes\Ab+\xi^{-3}\Ac\otimes\BAa\\
       &\quad+\BAa\otimes\Ac+\xi^{-4}\Ad\otimes\B+\B\otimes\Ad\\
       &\quad+\xi^{-5}\Ae\otimes\BAe+\BAe\otimes\Ae,\\
\De(\BAe)&=1\As\otimes\BAe+\BAe\otimes 1\As+\xi^{-1}\Aa\otimes\BAd+\BAd\otimes\Aa\\
       &\quad+\xi^{-2}\Ab\otimes\BAc+\BAc\otimes\Ab+\xi^{-3}\Ac\otimes\BAb\\
       &\quad+\BAb\otimes\Ac+\xi^{-4}\Ad\otimes\BAa+\BAa\otimes\Ad\\
       &\quad+\xi^{-5}\Ae\otimes\B+\B\otimes\Ae.
\end{align*}
In particular, we have
\begin{align*}
\Delta(\widetilde{x})=\widetilde{x}\otimes \epsilon+\widetilde{g}\otimes \widetilde{x},\quad
\Delta(\widetilde{g})=\widetilde{g}\otimes \widetilde{g},
\end{align*}
where $\widetilde{x}=\B+\BAa+\BAb+\BAc+\BAd+\BAe$, $\widetilde{g}=1\As+\xi^{-1}\Aa+\xi^{-2}\Ab+\xi^{-3}\Ac+\xi^{-4}\Ad+\xi^{-5}\Ae$.
\item Let $\alpha\in G(\C\As)=Alg(\C,\K)$. Since $a^6=1,b^2=0,ba=\xi ab$, we have $\alpha(a)$ is a $6$-th root of unity and $\alpha(b)=0$. Thus
    \begin{align*}
    G(\C\As)=\{\alpha_i=1\As{+}\xi^{-i}\Aa{+}\xi^{-2i}\Ab{+}\xi^{-3i}\Ac{+}\xi^{-4i}\Ad{+}\xi^{-5i}\Ae\}.
  \end{align*}
   Note that $\alpha_0=\epsilon$, $\alpha_i=(\alpha_1)^i$, and $G(\C\As)\simeq Z_6$ with generator $\alpha_1$ or $\alpha_5$.
\end{enumerate}

\end{rmk}
In order to compute the structure of the Drinfeld double $D(\C^{cop})$ of $\C^{cop}$ in terms of generators and relations, we have the following lemma which builds the isomorphism $\A_1\cong \C\As$ explicitly.
\begin{lem}\label{lemAtoCdual}
The algebra map $\phi:\A_1\rightarrow \C\As$ given by
\begin{align*}
\phi(g)&=\alpha_1=1\As+\xi^{-1}\Aa+\xi^{-2}\Ab+\xi^{-3}\Ac+\xi^{-4}\Ad+\xi^{-5}\Ae,\\
\phi(x)&=\theta(\B+\BAa+\BAb+\BAc+\BAd+\BAe), \quad \theta^2=\xi-1,
\end{align*}
is a Hopf algebra map.
\end{lem}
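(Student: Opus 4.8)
The plan is to verify, in turn, that $\phi$ respects the defining relations of $\A_1$ (so that it is a well-defined algebra map), that it is compatible with the comultiplications and counits (so that it is a bialgebra map), and then to invoke the standard fact that a bialgebra map between Hopf algebras automatically intertwines the antipodes; a short dimension count will upgrade ``Hopf algebra map'' to ``isomorphism.'' Throughout I would write $\widetilde g=\alpha_1$ and $\widetilde x=\B+\BAa+\BAb+\BAc+\BAd+\BAe$, so that $\phi(g)=\widetilde g$ and $\phi(x)=\theta\widetilde x$, and compute products in $\C\As$ by dualizing the coproduct of $\C$ from Proposition~\ref{proStrucOfC} via $\langle fh,c\rangle=\langle f,c_{(1)}\rangle\langle h,c_{(2)}\rangle$.

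For the relations, $g^6=1$ is immediate since $\widetilde g=\alpha_1$ lies in $G(\C\As)\cong Z_6$, so $\phi(g)^6=\epsilon$. The hard (indeed, the only non-formal) part will be the relation $x^2=1-g^2$: here I would exploit that $\widetilde x$ is supported on the ``$b$-block'' $\{(ba^j)\As\}$ of the dual basis while each $\alpha_i$ is supported on the ``$a$-block'' $\{(a^j)\As\}$, and that $\Delta(a^j)$ has the shape $a^j\otimes a^j+(\text{scalar})\,ba^k\otimes ba^l$ while $\Delta(ba^j)$ has the shape $a^k\otimes ba^l+ba^l\otimes a^k$; evaluating $\widetilde x^2=\langle\widetilde x\otimes\widetilde x,\Delta(-)\rangle$ on the standard basis of $\C$ then annihilates every $ba^j$ and isolates the relevant scalars on the $a^j$, and using $\Lam^{-1}=(\xi+1)(\xi-1)^{-1}$ together with $\xi^3=-1$ (equivalently $\xi^2=\xi-1$) one gets $\widetilde x^2=(\xi-1)^{-1}(\epsilon-\alpha_2)=(\xi-1)^{-1}(\epsilon-\phi(g)^2)$; multiplying by $\theta^2=\xi-1$ yields $\phi(x)^2=\epsilon-\phi(g)^2$, matching $x^2=1-g^2$. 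An entirely parallel evaluation gives $\alpha_1\widetilde x=\sum_{j=0}^5\xi^{-(j+1)}(ba^j)\As$ and $\widetilde x\,\alpha_1=\sum_{j=0}^5\xi^{-(j+4)}(ba^j)\As$, which coincide up to sign again by $\xi^3=-1$, whence $\phi(g)\phi(x)=-\phi(x)\phi(g)$. This $x^2=1-g^2$ check is where the precise normalization $\theta^2=\xi-1$ and the value of $\Lam^{-1}$ get forced, and it requires careful bookkeeping with the (somewhat intricate) coproduct of $\C$; everything else is formal.

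The coalgebra side is then essentially free, using the identities recorded in the Remark just above the lemma: $\Delta_{\C\As}(\widetilde g)=\widetilde g\otimes\widetilde g$ and $\Delta_{\C\As}(\widetilde x)=\widetilde x\otimes\epsilon+\widetilde g\otimes\widetilde x$. Applying $\phi\otimes\phi$ to $\Delta_{\A_1}(g)=g\otimes g$ and to $\Delta_{\A_1}(x)=x\otimes 1+g\otimes x$ reproduces exactly these two identities (the scalar $\theta$ in $\phi(x)$ sits linearly and matches on both sides), and for the counit one uses $\epsilon_{\C\As}(f)=f(1_\C)$ with $\alpha_1(1_\C)=1$, $\widetilde x(1_\C)=0$, matching $\epsilon_{\A_1}(g)=1$, $\epsilon_{\A_1}(x)=0$. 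Thus $\phi$ is a bialgebra map, hence a Hopf algebra map. Finally, $\phi$ carries $\langle g\rangle$ bijectively onto $G(\C\As)=\{\alpha_i\}_{i\in Z_6}$, while the six elements $\phi(g^ix)=\theta\,\alpha_i\widetilde x$ are supported on the $b$-block and form a discrete-Fourier family proportional to $\sum_{j}\xi^{-ij}(ba^j)\As$, hence linearly independent and disjoint in support from $\{\phi(g^i)\}$; so $\{\phi(g^i),\phi(g^ix)\}_{i\in Z_6}$ is a basis, and since $\dim\A_1=\dim\C\As=12$, $\phi$ is an isomorphism.
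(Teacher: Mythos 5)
Your proposal is correct, and on the central claim --- that $\phi$ is a bialgebra (hence Hopf algebra) map --- it does exactly what the paper does, except that the paper compresses the entire verification into the sentence ``it is clear that $\phi$ is a Hopf algebra map after a direct calculation,'' whereas you actually carry the calculation out. Your bookkeeping checks out: dualizing $\De(a^j)=a^j\otimes a^j+(\text{scalar})\,ba^k\otimes ba^l$ against $\widetilde x\otimes\widetilde x$ gives $\widetilde x^2=\Lam^{-1}\bigl(\Aa+\Ad\bigr)+\xi^{-1}\Lam^{-1}\bigl(\Ab+\Ae\bigr)=(\xi-1)^{-1}(\epsilon-\alpha_2)$ using $\xi^{-2}=-\xi$ and $\xi^{-4}=\xi^2=\xi-1$, which is precisely where $\theta^2=\xi-1$ is forced; and $\widetilde x\,\alpha_1=\xi^{-3}\alpha_1\widetilde x=-\alpha_1\widetilde x$ gives the anticommutation. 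Where you genuinely diverge from the paper is the final upgrade to an isomorphism: the paper observes that the image is a Hopf subalgebra of $\C\As$ properly containing the $6$-dimensional group algebra $\K G(\C\As)$ and invokes the Nichols--Zoller theorem (the dimension of a Hopf subalgebra divides $12$, so it must be $12$), while you exhibit the twelve elements $\phi(g^i),\phi(g^ix)$ as a basis directly, using that the two families live on disjoint blocks of the dual basis and each is a Vandermonde-type family. Both arguments are valid; the paper's is shorter and avoids any further computation, while yours is self-contained, stays elementary, and as a bonus recovers the explicit formulas for $\phi(g^i)$ and $\phi(g^ix)$ that the paper records separately in Remark~\ref{rmkAtoCdual}.
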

\begin{proof}
It is clear that $\phi$ is a Hopf algebra map after a direct calculation. Hence, the image of $\phi$ is a Hopf subalgebra of $\C\As$ of dimension bigger than $6$, since it contains the group algebra $\K G(\C\As)$ and the image of $x$. Thus, by the Nichols-Zoller theorem, $\phi$ is surjective and whence an isomorphism since their dimensions are the same.
\end{proof}

\begin{rmk}\label{rmkAtoCdual}
Let $\{ g^i, g^ix\}_{0\leq i,j<6}$ be a linear basis of $\A_1$. We have
\begin{align*}
\phi(g^i)&=\alpha_i=1\As+\xi^{-i}\Aa+\xi^{-2i}\Ab+\xi^{-3i}\Ac+\xi^{-4i}\Ad+\xi^{-5i}\Ae,\\
\phi(g^ix)&=\theta(\xi^{-i}\B+\xi^{-2i}\BAa+\xi^{-3i}\BAb+\xi^{-4i}\BAc+\xi^{-5i}\BAd+\BAe).
\end{align*}
\end{rmk}

Now we try to describe the Drinfeld double $\D:=\D(\C^{cop})$ of $\C^{cop}$.
\begin{pro}
$\D:=\D(\C^{cop})$ as a coalgebra is isomorphic to the tensor coalgebra $\A_1^{bop}\otimes \C^{cop}$, and as an algebra is generated by the elements $a, b, g, x$ satisfying the relations in $\C^{cop}$, the relations in $\A_1^{bop}$ and
\begin{align*}
ag&=ga,\quad ax+\xi^{-2}xa=\Lam^{-1}\theta\xi^{-2}(ba^3-gb),\\
bg&=-gb,\quad bx+\xi^{-2}xb=\theta\xi^{-2}(a^4-ga).
\end{align*}
\end{pro}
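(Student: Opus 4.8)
The plan is to compute the Drinfeld double $\D(\C^{cop})$ directly from the defining formula \eqref{equDrinfelddouble}, using the explicit presentations already obtained. By Proposition~\ref{proStrucOfC} we know $\C$, hence $\C^{cop}$, as a Hopf algebra in terms of $a,b$; by the definition of the Drinfeld double, $\D(\C^{cop}) = (\C^{cop})^{\ast\,cop}\otimes \C^{cop}$ has the tensor product coalgebra structure, so the coalgebra claim is almost immediate once we identify $(\C^{cop})^{\ast\,cop}$. Since $\C^{cop}$ has comultiplication opposite to $\C$, one has $(\C^{cop})^{\ast} \cong (\C^{\ast})^{op}$ as algebras, and taking $cop$ on the coalgebra side gives $(\C^{cop})^{\ast\,cop} \cong (\C^{\ast})^{op\,cop} = (\C^{\ast})^{bop}$. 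Now invoke Lemma~\ref{lemAtoCdual}: the Hopf isomorphism $\phi:\A_1\to\C^{\ast}$ induces $\A_1^{bop}\cong (\C^{\ast})^{bop}$, which identifies the dual factor with $\A_1^{bop}$ on generators $g,x$. This establishes the coalgebra statement and tells us the first two families of relations (those inherited from $\C^{cop}$ and from $\A_1^{bop}$) hold automatically in $\D$.

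The substance is the four cross-relations between $\{a,b\}\subset\C^{cop}$ and $\{g,x\}\subset\A_1^{bop}$. First I would record the multiplication rule of $\D(\C^{cop})$ specialized to our situation: for $p\in(\C^{cop})^{\ast\,cop}$ and $c\in\C^{cop}$, equation~\eqref{equDrinfelddouble} gives $(p\otimes 1)(\epsilon\otimes c) = p\otimes c$ and $(\epsilon\otimes c)(p\otimes 1) = p\langle p_{(3)},?\rangle\cdots$; concretely the commutation of $c$ past $p$ is governed by $\langle p_{(1)}, S^{-1}(c_{(3)})\rangle\, c_{(2)}\,\langle p_{(3)}, c_{(1)}\rangle$ paired appropriately. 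The key computational inputs are: (i) the coproducts $\De(a),\De(b)$ and $\De^{(2)}$ of $a,b$ from Proposition~\ref{proStrucOfC}; (ii) the antipode $S_{\C^{cop}}$, which is $S_{\C}^{-1}$, hence $S^{-1}_{\C^{cop}} = S_{\C}$, so we use $S(a)=a^5$, $S(b)=\xi^{-2}ba$ directly; (iii) the pairing values $\langle\alpha_i, a^j b^{\delta}\rangle$ and $\langle\phi(x), a^j b^{\delta}\rangle$, which are read off from Remark~\ref{rmkAtoCdual} and the dual basis. I would then evaluate $xa$, $xb$, $ga$, $gb$ via \eqref{equDrinfelddouble} — equivalently compute $a$ acting on $x$ etc. — collecting terms; the relation $ag=ga$ and $bg=-gb$ should fall out quickly from the grouplike-type behaviour of $g$ (it pairs as a character $\alpha_1$), while the two relations involving $x$ require expanding $\De^{(2)}(a)$ and $\De^{(2)}(b)$ and pairing the outer legs against $\phi(x)$, whose support on $\C^{cop}$ is concentrated on $\{b,ba,\dots,ba^5\}$. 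The coefficients $\Lam^{-1}\theta\xi^{-2}$ and $\theta\xi^{-2}$ will emerge from the structure constants $\Lam^{-1}$ appearing in $\De(a)$ together with the normalization $\theta^2=\xi-1$.

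After deriving the relations, I would check that the algebra $\widetilde{\D}$ abstractly presented by $a,b,g,x$ modulo the relations of $\C^{cop}$, the relations of $\A_1^{bop}$, and the four cross-relations surjects onto $\D$ and has dimension at most $144 = 12\times 12$: one shows that every word can be rewritten, using the cross-relations, in the normal form $(\text{monomial in }g,x)\cdot(\text{monomial in }a,b)$, giving a spanning set of size $\le 12\cdot 12$; since $\dim\D = 144$, the surjection is an isomorphism. This is the standard "presentation of a double" argument and the PBW-type rewriting is routine once the cross-relations are in hand.

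The main obstacle is bookkeeping: correctly handling the three simultaneous opposite-structure conventions ($\C^{cop}$ on one tensor factor, the $\ast\,cop = bop$ on the dual factor, and the $S^{-1}$ in \eqref{equDrinfelddouble}), so that the signs and powers of $\xi$ in the cross-relations come out exactly as stated. I expect the $x$-relations to be where errors are most likely, because they mix the non-grouplike part of the coproduct of $a$ (the $\Lam^{-1}b\otimes ba^3$ term, and its iterate) with the non-trivial pairing of $\phi(x)$; keeping the evaluation $\langle\phi(x), ba^i\rangle = \theta\xi^{-i}$ (up to the precise index shift dictated by Remark~\ref{rmkAtoCdual}) straight throughout is the crux. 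Everything else — the coalgebra identification, the inherited relations, and the dimension count — is formal.
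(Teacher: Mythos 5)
Your proposal is correct and follows essentially the same route as the paper: compute the iterated coproducts $\De^{2}$ of $a,b$ in $\C$ and of $g,x$ in $\A_1$, then read the four cross-relations directly off the multiplication formula \eqref{equDrinfelddouble}, using $\phi$ from Lemma \ref{lemAtoCdual} to identify the dual factor with $\A_1^{bop}$. Your closing dimension-count/normal-form argument for why these relations give a complete presentation is a standard step the paper leaves implicit, but it is a sound addition rather than a divergence.
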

\begin{proof}
Note that
\begin{align*}
\De_{\A_2}^{2}(g)&=g\otimes g\otimes g,\quad \De_{\A_2}^{2}(x)=x\otimes 1\otimes 1+g\otimes x\otimes 1+g\otimes g\otimes x,\\
\De_{\C}^2(a)&=a\otimes a\otimes a+\Lam^{-1}b\otimes ba^3\otimes a+\Lam^{-1}b\otimes a^4\otimes ba^3+\Lam^{-1}a\otimes b\otimes ba^3,\\
\De_{\C}^2(b)&=b\otimes a^4\otimes a^4+a\otimes b\otimes a^4+a\otimes a\otimes b+\Lam^{-1}b\otimes ba^3\otimes b.
\end{align*}
By equation $\eqref{equDrinfelddouble}$, we have
\begin{align*}
ag&=\langle g,a\rangle ga\langle g,S(a)\rangle=ga,\\
bg&=\langle g,a^4\rangle gb\langle g,S(a)\rangle =-gb,\\
ax&=\Lam^{-1}\langle 1,a\rangle ba^3\langle x,S(b)\rangle+\langle 1,a\rangle xa\langle g,S(a)\rangle+\Lam^{-1}\langle x,ba^3\rangle gb\langle g,S(a)\rangle\\
  &=\Lam^{-1}\theta\xi^{-2}ba^3+\xi^{-5}xa+\Lam^{-1}\theta\xi^{-5}gb,\\
bx&=\langle 1,a^4\rangle a^4\langle x,S(b)\rangle+\langle 1,a^4\rangle xb\langle g,S(a)\rangle+\langle x,b\rangle ga\langle g,S(a)\rangle\\
  &=\theta\xi^{-2}a^4+\xi^{-5}xb+\theta\xi^{-5}ga.
\end{align*}
\end{proof}

\section{Presentation of the Drinfeld double $\D(\C^{cop})$}\label{secPresentation}
In this section, we study the representations of the Drinfeld double $\D(\C^{cop})$. We first describe the simple modules and the projective covers of the simple modules. Then we study its Ext-quiver, compute the separation diagram and as a result show that $\D(\C^{cop})$ is of wild representation type.
Now we begin by describing the one-dimensional $\D$-modules.
\begin{lem}\label{onesimple}
There are six non-isomorphic one-dimensional simple modules given by the characters $\chi^i,\,0\leq i<6$, where
\begin{align*}
\chi^i(a)=\xi^i, \quad \chi^i(b)=0,\quad \chi^i(g)=(-1)^i,\quad \chi^i(x)=0.
\end{align*}
Moreover, any one-dimensional $\D$-module is isomorphic to $\K_{\chi^i}$ for some $0\leq i<6$.
\end{lem}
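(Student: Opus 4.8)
The plan is to use that a one-dimensional $\D$-module is the same thing as an algebra character $\chi\colon\D\to\K$, and to determine all such characters by evaluating the defining relations of $\D$ on the generators $a,b,g,x$. Since the relations are grouped into those coming from $\C^{cop}$, those coming from $\A_1^{bop}$, and the four cross relations, I would process them in that order.

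First, from the relations inherited from $\C^{cop}$ — which as an algebra is $\langle a,b\mid a^6=1,\ b^2=0,\ ba=\xi ab\rangle$ by Proposition \ref{proStrucOfC} — one gets that $\chi(a)$ is a sixth root of unity, hence $\chi(a)=\xi^i$ for a unique $i\in\{0,\dots,5\}$, while $\chi(b)^2=0$ forces $\chi(b)=0$. Next, the relations inherited from $\A_1^{bop}$ give $\chi(g)^6=1$, $\chi(g)\chi(x)=-\chi(x)\chi(g)$, and $\chi(x)^2=1-\chi(g)^2$; as $\chi(g)$ is a root of unity it is nonzero, so the commutation relation forces $\chi(x)=0$, and then $1-\chi(g)^2=0$, i.e.\ $\chi(g)=\pm1$. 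At this point $\chi$ is determined by the pair $(\chi(a),\chi(g))$ with $\chi(a)^6=1$ and $\chi(g)^2=1$.

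Now I impose the cross relations. The relations $ag=ga$, $bg=-gb$, and $ax+\xi^{-2}xa=\Lam^{-1}\theta\xi^{-2}(ba^3-gb)$ are automatically satisfied once $\chi(b)=\chi(x)=0$, since both sides of each become $0$. The only genuine constraint comes from $bx+\xi^{-2}xb=\theta\xi^{-2}(a^4-ga)$: its left-hand side evaluates to $0$, so we must have $\chi(a)^4=\chi(g)\chi(a)$, and dividing by $\chi(a)\neq0$ gives $\chi(g)=\chi(a)^3=\xi^{3i}=(-1)^i$, using $\xi^3=-1$; this is indeed a square root of unity, so it is compatible with the earlier constraint and imposes nothing further. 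Conversely, one checks directly that the assignment $a\mapsto\xi^i$, $b\mapsto0$, $g\mapsto(-1)^i$, $x\mapsto0$ respects every defining relation of $\D$, hence defines a character $\chi^i$ for each $i$.

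Finally, two one-dimensional modules are isomorphic precisely when the corresponding characters coincide, and the $\chi^i$ take pairwise distinct values on $a$; so $\K_{\chi^0},\dots,\K_{\chi^5}$ are pairwise non-isomorphic and exhaust all one-dimensional $\D$-modules by the argument above. I do not expect a real obstacle here: the computation is routine bookkeeping, and the only point that genuinely needs care is verifying that no relation other than $bx+\xi^{-2}xb=\theta\xi^{-2}(a^4-ga)$ contributes a constraint, and that this one yields exactly the compatibility $\chi(g)=\chi(a)^3$ and not something stronger.
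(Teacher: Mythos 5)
Your proposal is correct and follows essentially the same route as the paper: identify one-dimensional modules with algebra characters, use $b^2=0$ and the anticommutation relations to force $\chi(b)=\chi(x)=0$, deduce $\chi(g)^2=1$ from $x^2=1-g^2$, and extract $\chi(g)=\chi(a)^3$ from the cross relation $bx+\xi^{-2}xb=\theta\xi^{-2}(a^4-ga)$. Your explicit verification that the remaining relations impose no further constraints and that each assignment genuinely defines a character is a slightly more careful rendering of what the paper leaves implicit.
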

\begin{proof}
Let $\chi\in G(\D\As)=\hom(\D,\K)$. Since $a^6=1=g^6$, we have $\chi(a)$ and $\chi(g)$ are both $6$-th roots of unity.
From $b^2=0$, $gb=-bg$ and $gx=-xg$, we have that $\chi(b)=\chi(x)=0$, and whence $\chi(g)^2=1$ since $x^2=1-g^2$. From
the relation $bx+\xi^{-2}xb=\theta\xi^{-2}(a^4-ga)$, we have $\chi(a)^3=\chi(g)$. Thus $\chi$ is completely determined by $\chi(a)$. Let $\chi(a)=\xi^i$ for some $i\in Z_6$, for different $i$, it is clear that these modules are pairwise non-isomorphic and any one-dimensional $\D$-module is isomorphic to $\K_{\chi^i}$ for some $0\leq i<6$.
\end{proof}
Next, we describe two-dimensional simple $\D$-modules. To this end, let us consider the finite subset of $Z_6\times Z_6$ given by
\begin{align*}
\Lambda=\{(i,j)\in Z_6\times Z_6\mid 3i\neq j\}.
\end{align*}
Clearly, $|\Lambda|=30$.
\begin{lem}\label{twosimple}
For any pair $(i,j)\in\Lambda$, there exists a simple left $\D$-module $V_{i,j}$ of dimension $2$. If we denote $\Lam_1=\xi^i$ and $\Lam_2=\xi^j$, the action on a fixed basis is given by
\begin{align*}
    [a]_{i,j}&=\left(\begin{array}{ccc}
                                   \Lam_1 & 0\\
                                   0 &    \xi\Lam_1
                                 \end{array}\right),\quad
    [b]_{i,j}=\left(\begin{array}{ccc}
                                   0 & 1\\
                                   0 & 0
                                 \end{array}\right),\quad
    [g]_{i,j}=\left(\begin{array}{ccc}
                                   \Lam_2 & 0\\
                                   0 & -\Lam_2
                                 \end{array}\right),\\
    [x]_{i,j}&=\left(\begin{array}{ccc}
                                   0 & \theta^{-1}\xi^{2}\Lam_1^{-1}(\Lam_1^3+\Lam_2)\\
                                   \theta\xi^{-2}\Lam_1(\Lam_1^3-\Lam_2) & 0
                                 \end{array}\right).
\end{align*}
\end{lem}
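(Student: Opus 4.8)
The plan is to verify directly that the assignment $a\mapsto[a]_{i,j}$, $b\mapsto[b]_{i,j}$, $g\mapsto[g]_{i,j}$, $x\mapsto[x]_{i,j}$ extends to an algebra map $\D\to M_2(\K)$, and then to check that the resulting module is simple. First I would record the defining relations of $\D$ collected in the previous Proposition: the relations of $\C^{cop}$ ($a^6=1$, $b^2=0$, $ba=\xi ab$), the relations of $\A_1^{bop}$ ($g^6=1$, $x^2=1-g^2$, $gx=-xg$), and the four cross relations $ag=ga$, $bg=-gb$, $ax+\xi^{-2}xa=\Lam^{-1}\theta\xi^{-2}(ba^3-gb)$, $bx+\xi^{-2}xb=\theta\xi^{-2}(a^4-ga)$. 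Since the proposed matrices are block-structured ($[a]$, $[g]$ diagonal; $[b]$, $[x]$ anti-diagonal), each relation reduces to a small number of scalar identities in $\xi$, $\Lam_1=\xi^i$, $\Lam_2=\xi^j$, and $\theta$ (recall $\theta^2=\xi-1$). The relations $a^6=1$, $g^6=1$, $b^2=0$, $ba=\xi ab$, $ag=ga$, $bg=-gb$, $gx=-xg$ are immediate from the matrix shapes; the genuine content sits in $x^2=1-g^2$ and the two inhomogeneous cross relations.

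For $x^2=1-g^2$: the product $[x]_{i,j}^2$ is the scalar $\theta^{-1}\xi^2\Lam_1^{-1}(\Lam_1^3+\Lam_2)\cdot\theta\xi^{-2}\Lam_1(\Lam_1^3-\Lam_2)=(\Lam_1^3+\Lam_2)(\Lam_1^3-\Lam_2)=\Lam_1^6-\Lam_2^2$ times the identity. Since $\Lam_1=\xi^i$ has $\Lam_1^6=1$, this equals $1-\Lam_2^2=1-[g]_{i,j}^2$, as required; here the $\theta$'s cancel cleanly, which is exactly why the factor $\theta^{\pm1}$ was inserted in $[x]$. For the cross relation $bx+\xi^{-2}xb=\theta\xi^{-2}(a^4-ga)$: both $[b][x]$ and $[x][b]$ are diagonal, so one compares the $(1,1)$ and $(2,2)$ entries separately against the diagonal matrix $\theta\xi^{-2}([a]^4-[g][a])=\theta\xi^{-2}\mathrm{diag}(\Lam_1^4-\Lam_1\Lam_2,\ \xi^4\Lam_1^4+\xi\Lam_1\Lam_2)$, and similarly for $ax+\xi^{-2}xa=\Lam^{-1}\theta\xi^{-2}(ba^3-gb)$, where now both sides are anti-diagonal. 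Each of these is a routine identity in $\xi$ once one substitutes $\Lam=(\xi-1)(\xi+1)^{-1}$ and $\theta^2=\xi-1$; I would present one entry in full and leave the symmetric ones to the reader. This bookkeeping is the least interesting but most error-prone part, so it deserves care rather than cleverness — it is the main (only) real obstacle, and it is purely computational.

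Finally, simplicity: a two-dimensional module fails to be simple exactly when it has a common eigenvector for all of $a,b,g,x$, i.e.\ when the representation is upper- or lower-triangularizable in the chosen basis. Since $[a]_{i,j}$ and $[g]_{i,j}$ are already diagonal with the standard basis $\{v_1,v_2\}$ as eigenvectors (and with distinct eigenvalue pairs, as $\Lam_1\neq\xi\Lam_1$ and $\Lam_2\neq-\Lam_2$), any submodule is spanned by a subset of $\{v_1,v_2\}$; but $[b]_{i,j}v_2=v_1\neq0$ and $[x]_{i,j}$ sends each $v_k$ to a nonzero multiple of the other precisely when both off-diagonal entries of $[x]_{i,j}$ are nonzero. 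The entry $\theta^{-1}\xi^2\Lam_1^{-1}(\Lam_1^3+\Lam_2)$ is nonzero iff $\Lam_1^3\neq-\Lam_2$ and the entry $\theta\xi^{-2}\Lam_1(\Lam_1^3-\Lam_2)$ is nonzero iff $\Lam_1^3\neq\Lam_2$; already $[b]_{i,j}$ kills the line $\K v_1$ only, so the only candidate submodule is $\K v_1$, and it is stable under $x$ iff $\Lam_1^3=\Lam_2$, i.e.\ $3i=j$ in $Z_6$. Thus the condition $(i,j)\in\Lambda$, namely $3i\neq j$, is exactly what guarantees $V_{i,j}$ is simple, completing the proof.
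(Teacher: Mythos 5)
Your verification argument is correct and does establish the lemma as literally stated, but it takes the opposite route from the paper. You \emph{posit} the four matrices and check that they satisfy the presentation of $\D$ from the preceding proposition (your spot-checks are right: $[x]^2=(\Lam_1^6-\Lam_2^2)I=1-[g]^2$, the two inhomogeneous cross relations reduce to the identities defining $x_2,x_3$, using $\xi^3=-1$ and $\theta^2=\xi-1$), and your simplicity analysis is exactly right: since $[a]$ has distinct eigenvalues, a one-dimensional submodule must be $\K v_1$ or $\K v_2$, the latter is excluded by $b\cdot v_2=v_1$, and $\K v_1$ is $x$-stable precisely when $\Lam_1^3=\Lam_2$, i.e.\ $3i=j$. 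The paper instead runs a \emph{classification}: it starts from an arbitrary two-dimensional simple module, diagonalizes $a$ and $g$, and uses the relations one by one to force $b_1=b_4=0$, $g_2=-g_1$, $b_3=0$, $b_2=1$, $a_2=\xi a_1$ and the stated formulas for $x_2,x_3$; it then also proves $V_{i,j}\cong V_{k,l}$ iff $(i,j)=(k,l)$. What the paper's route buys is precisely this extra information --- that \emph{every} two-dimensional simple is some $V_{i,j}$ and that the thirty modules are pairwise non-isomorphic --- which is what makes the counting argument in Theorem \ref{thmsimplemoduleD} work; your route does not deliver that, so if you adopted it you would need a separate argument for exhaustiveness and non-isomorphism before the theorem. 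What your route buys is that the existence claim is actually nailed down: the paper's derivation only produces necessary conditions on the matrices, and the check that they really do define a $\D$-module (equivalently, that the listed relations present $\D$, e.g.\ by a PBW-type dimension count giving $144$) is left implicit there, whereas it is the explicit content of your proof.
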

\begin{proof}
Since $a^6=g^6=1$ and $ga=ag$, we can choose a basis of the two dimensional simple $\D$-module $V$ of such that the matrices defining the action on $V$ are of the form
\begin{align*}
    [a]&=\left(\begin{array}{ccc}
                                   a_1 & 0\\
                                    0  & a_2
                                 \end{array}\right),\quad
    [b]=\left(\begin{array}{ccc}
                                   b_1 & b_2\\
                                   b_3 & b_4
                                 \end{array}\right),\\
    [g]&=\left(\begin{array}{ccc}
                                   g_1 & 0\\
                                   0   & g_2
                                 \end{array}\right),\quad
    [x]=\left(\begin{array}{ccc}
                                   x_1 & x_2\\
                                   x_3 & x_4
                                 \end{array}\right),
\end{align*}
where $a_1^6=1=a_2^6$ and $g_1^6=1=g_2^6$. From the relation $gb=-bg$, we have that
\begin{align*}
        \left(\begin{array}{ccc}
                                   g_1b_1 & g_1b_2\\
                                   g_2b_3 & g_2b_4
                                 \end{array}\right)=-
        \left(\begin{array}{ccc}
                                   b_1g_1 & b_2g_2\\
                                   b_3g_1  & b_4g_2
                                 \end{array}\right).
\end{align*}
Thus we get $2g_1b_1=0=2g_2b_4$ and $(g_1+g_2)b_3=0=(g_1+g_2)b_2$, which implies that $b_1=0=b_4$ since $g_1, g_2\neq 0$. Similarly, from the relation $gx=-xg$, we have $x_1=0=x_4$ and $(g_1+g_2)x_3=0=(g_1+g_2)x_2$. If $g_1+g_2\neq 0$, then $b_3=0=b_2,\,x_3=0=x_2$, that is, $[x], [b]$ are zero matrices, which implies $V$ can be decomposed as a $\D$-module, a contradiction. Thus we have $g_1=-g_2$.

From the relation $b^2=0$, we have that $b_2b_3=0$ and assume $b_3=0$. If $b_2=0$, it is clear that $V$ is simple $\Leftrightarrow x_2x_3\neq 0$. But from the relation $ax+\xi^{-2}xa=\Lam^{-1}\theta\xi^{-2}(ba^3-gb)$, we have
\begin{align*}
(a_2+\xi^{-2}a_1)x_3=0,\quad a_1x_2+\xi^{-2}a_2x_2=\Lam^{-1}\theta\xi^{-2}(a_2^3-g_1)b_2,
\end{align*}
which implies $a_2=0=a_1$, a contradiction. Therefore we must have $b_2\neq 0$ and we may also assume $b_2=1$.

The relation $ba=\xi ab$ implies $a_2=\xi a_1$, and the relation $x^2=1-g^2$ implies $x_2x_3=1-g_1^2$. From the relation $bx+\xi^{-2}xb=\theta\xi^{-2}(a^3-g)a$, we have
\begin{align*}
        \left(\begin{array}{ccc}
                                   x_3 & 0\\
                                  0 & \xi^{-2}x_3
                                 \end{array}\right)=\theta\xi^{-2}
        \left(\begin{array}{ccc}
                                   a_1^4-g_1a_1 & 0\\
                                   0  & a_2^4-g_2a_2
                                 \end{array}\right),
\end{align*}
which implies $x_3=\theta\xi^{-2}(a_1^3-g_1)a_1$. Indeed, $\xi^{-2}x_3=\theta\xi^{-2}(a_2^4-g_2a_2)\Leftrightarrow x_3=\theta\xi^{-2}(a_1^3-g_1)a_1$ since $g_1=-g_2$ and $a_2=\xi a_1$. Thus we have $x_2=\theta^{-1}\xi^{2}a_1^{-1}(a_1^3+g_1)$ since $x_2x_3=1-g_1^2=(a_1^3+g_1)(a_1^3-g_1)$.

From the discussion above, the matrices defining the action on $V$ are of the form
\begin{align*}
    [a]_{i,j}&=\left(\begin{array}{ccc}
                                   \Lam_1 & 0\\
                                   0 &    \xi\Lam_1
                                 \end{array}\right),\quad
    [b]_{i,j}=\left(\begin{array}{ccc}
                                   0 & 1\\
                                   0 & 0
                                 \end{array}\right),\quad
    [g]_{i,j}=\left(\begin{array}{ccc}
                                   \Lam_2 & 0\\
                                   0 & -\Lam_2
                                 \end{array}\right),\\
    [x]_{i,j}&=\left(\begin{array}{ccc}
                                   0 & \theta^{-1}\xi^{2}\Lam_1^{-1}(\Lam_1^3+\Lam_2)\\
                                   \theta\xi^{-2}\Lam_1(\Lam_1^3-\Lam_2) & 0
                                 \end{array}\right),
\end{align*}
with $\Lam_1^6=1=\Lam_2^6$. And it is clear that $V$ is simple if and only if $\Lam_1^3-\Lam_2\neq 0$.
If we set $\Lam_1=\xi^i$ and $\Lam_2=\xi^j$ for some $i, j\in Z_6$,
then $3i\neq j$ in $Z_6$, that is, $(i,j)\in\Lambda$ and
in such a case, we denote this module $V$ by $V_{i,j}$.

Now we claim that $V_{i,j}\cong V_{k,l}\Leftrightarrow (i,j)=(k,l)$ in $Z_6\times Z_6$.
Suppose that $\Psi:V_{i,j}\mapsto V_{k,l}$ is a $D$-module isomorphism, and denote by
$[\Psi]=(p_{i,j})_{i,j=1,2}$ the matrix of $\Psi$ in the given basis. As a module morphism, we have $[b][\Psi]=[\Psi][b]$ and $[a][\Psi]=[\Psi][a]$, which imply $p_{21}=0,\,p_{11}=p_{22}$ and $(\xi^k-\xi^i)p_{11}=0,\,(\xi^k-\xi^{i+1})p_{12}=0$. Thus we have $\xi^i=\xi^k$ which yields $p_{12}=0$ since $\Psi$ is an isomorphism.
Similarly, we have $\xi^j=\xi^l$, and then the claim follows.
\end{proof}

\begin{rmk}
For a left $\D$-module $V$, there exists a left dual module denoted by $V\As$ with module structure given by $(h\rightharpoonup f)(v)=f(S(h)\cdot v)$ for all $h\in \D, v\in V, f\in V\As$. A direct computation shows that $V_{i,j}\As\cong V_{-i-1,-j-3}$ for all $(i,j)\in \Lambda$.
\end{rmk}

Finally, we describe all simple left $\D$-modules up to isomorphism.
\begin{thm}\label{thmsimplemoduleD}
There exist $36$ simple left $D$-modules pairwise non-isomorphic, among which $6$ one-dimensional modules are given by Lemma $\ref{onesimple}$ and $30$ two-dimensional simple modules are given by Lemma $\ref{twosimple}$.
\end{thm}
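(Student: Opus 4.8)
The plan is to use that Lemmas~\ref{onesimple} and \ref{twosimple} already provide complete lists of the simple $\D$-modules of dimension $1$ and $2$ — namely the $\K_{\chi^{i}}$ for $i\in Z_6$ and the $V_{i,j}$ for $(i,j)\in\Lambda$ — and that these $36$ modules are pairwise non-isomorphic (the characters $\chi^i$ are distinct, the $V_{i,j}$ are separated in Lemma~\ref{twosimple}, and dimension $1$ never matches dimension $2$). Thus the only remaining point is to show that $\D$ has \emph{no} simple module of dimension $\ge 3$. I would first observe that the cheap estimate $\sum_S(\dim S)^2\le\dim\D=144$ only yields $126+(\dim V)^2\le 144$, i.e. $\dim V\le 4$, and hence does not suffice; a structural argument is needed.

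Here is the argument I would run. Let $V$ be a simple $\D$-module. Since $a,g$ commute and satisfy $a^6=g^6=1$, they act semisimply on $V$, so $V=\bigoplus_{\mu}V_{\mu_1,\mu_2}$ decomposes into joint eigenspaces ($\mu_1,\mu_2$ sixth roots of unity). Using $ba=\xi ab$ and $bg=-gb$ one checks $b\,V_{\mu_1,\mu_2}\subseteq V_{\xi^{-1}\mu_1,\,-\mu_2}$, and using $gx=-xg$ together with the cross relation $ax+\xi^{-2}xa=\Lam^{-1}\theta\xi^{-2}(ba^3-gb)$ one checks $x\,V_{\mu_1,\mu_2}\subseteq V_{\xi\mu_1,\,-\mu_2}\oplus V_{\xi^{-1}\mu_1,\,-\mu_2}$, where the second summand equals $\K\cdot b(V_{\mu_1,\mu_2})$. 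In particular $b$ sends eigenspaces to eigenspaces, so the kernel of $b$ on $V$ is spanned by eigenvectors; and since $b^2=0$ while $V\neq 0$, this kernel is nonzero. Hence we may choose a nonzero eigenvector $v\in V_{\mu_1,\mu_2}$ with $bv=0$. The virtue of choosing $v$ in $\ker b$ is that the description of $x\,V_{\mu_1,\mu_2}$ then forces $w:=xv$ to lie in the \emph{single} eigenspace $V_{\xi\mu_1,\,-\mu_2}$, the other possible component being a scalar multiple of $bv=0$.

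Next I would verify directly that $\K v+\K w$ is a $\D$-submodule. Both $v$ and $w$ are eigenvectors, so $a,g$ act on them by scalars; moreover $bv=0$ and $xv=w$; the relation $x^2=1-g^2$ gives $xw=x^2v=(1-g^2)v\in\K v$; and evaluating the cross relation $bx+\xi^{-2}xb=\theta\xi^{-2}(a^4-ga)$ at $v$ gives, since $bv=0$, that $bw=b(xv)=\theta\xi^{-2}(a^4-ga)v\in\K v$. As $\D$ is generated as an algebra by $a,g,b,x$, these four facts show $\D\cdot v\subseteq\K v+\K w$; by simplicity $\D\cdot v=V$, so $\dim V\le 2$. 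Together with Lemmas~\ref{onesimple} and \ref{twosimple} this gives the theorem.

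I expect the only genuinely delicate part to be the eigenspace bookkeeping of the second paragraph — establishing the inclusions for $b\,V_{\mu}$ and $x\,V_{\mu}$, and in particular that $xv$ is homogeneous when $bv=0$ — which falls out of the cross relations of $\D$ by a short but careful computation. The conceptual core, by contrast, is essentially immediate: $b$ acts nilpotently on any $\D$-module, so a simple module always contains an eigenvector $v$ killed by $b$, the element $xv$ is then again an eigenvector, and the two relations $x^2=1-g^2$ and $bx+\xi^{-2}xb=\theta\xi^{-2}(a^4-ga)$ pin the cyclic submodule $\D v$ inside $\K v+\K(xv)$.
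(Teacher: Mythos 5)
Your argument is correct, but it proves the theorem by a genuinely different route than the paper. The paper keeps the counting argument: it uses the strict inequality $6\cdot 1^2+30\cdot 2^2+nx^2<\dim\D=144$ (strict because $\D$ is not semisimple) to force a hypothetical larger simple module to be unique of dimension $x\in\{3,4\}$, and then invokes the divisibility constraint coming from the tensor action of $G(\D\As)\cong Z_6$ on isomorphism classes of simple modules of a fixed dimension (the Andruskiewitsch--Natale counting arguments), concluding that $6$ would have to divide $x^2=9$ or $16$ --- a contradiction. So your remark that the dimension count ``does not suffice'' is slightly unfair to the paper: it does suffice once supplemented by that divisibility theorem. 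Your alternative replaces this external input with a direct structural bound: decompose a simple $V$ into joint $(a,g)$-eigenspaces, use $b^2=0$ to find a joint eigenvector $v$ with $bv=0$, observe via the cross relation $ax+\xi^{-2}xa=\Lam^{-1}\theta\xi^{-2}(ba^3-gb)$ that $xv$ is then again a joint eigenvector, and check from $x^2=1-g^2$ and $bx+\xi^{-2}xb=\theta\xi^{-2}(a^4-ga)$ that $\K v+\K xv$ is a submodule, whence $\dim V\le 2$. I verified the eigenspace bookkeeping: $bV_{\mu_1,\mu_2}\subseteq V_{\xi^{-1}\mu_1,-\mu_2}$, and $(a-\xi\mu_1)(xv)$ is a multiple of $bv$, so the component of $xv$ outside $V_{\xi\mu_1,-\mu_2}$ vanishes when $bv=0$ (using $\xi^{2}\neq 1$ to separate the two relevant eigenvalues). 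What your approach buys is self-containedness and a reproof of the completeness of the lists in Lemmas \ref{onesimple} and \ref{twosimple} (every simple module arises as such a cyclic module $\D v$); what the paper's approach buys is brevity, at the cost of citing the divisibility result and of relying on the non-semisimplicity of $\D$ for the strict inequality. Both are valid.
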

\begin{proof}
Assume that there exists at least one simple module of dimension $x$ bigger than two and denote $n$ the amount of simple modules (up to isomorphism) of dimension $x$. From lemma $\ref{onesimple}$ and Lemma $\ref{twosimple}$, we have that
\begin{align*}
6\times 1^2+30\times 2^2+nx^2=126+nx^2<\dim \D\As=144.
\end{align*}
Then $nx^2<18$, which implies $x=3$ or $4$ and $n=1$. However, in these cases, we must have $6=|G(\D\As)|$ divides $9$ or $16$, a contradiction.
\end{proof}

Now we discuss the projective covers of the simple modules of $\D$. Let $\K_{\chi^j}$ denote the one-dimensional $D$-module associated to the character $\chi^j$ for $j\in Z_6$, $Irr(\D)$ denote the set of isomorphism classes of simple modules and $\Pp(V)$ denote the projective cover of a simple $\D$-module $V$. It is well-known that projective covers are unique up to isomorphism and as a left $\D$-module and one has that
\begin{align*}
\D\cong \oplus_{V\in Irr(D)}\Pp(V)^{\dim\, V}.
\end{align*}

\begin{lem}\label{lemProjectwodimsimple}
\begin{enumerate}
  \item $V_{i,j}\otimes \K_{\chi^{k}}\cong V_{i+k,j+3k}$ and $\K_{\chi^{l}}\otimes\K_{\chi^{k}}\cong \K_{\chi^{k+l}}$ for all $(i,j)\in\Lambda,\,k,\,l\in Z_6$.
  \item $\Pp(V_{i,j})=V_{i,j}$ for all $(i,j)\in\Lambda$.
  \item $\Pp(\K_{\chi^i})=\Pp(\K_{\epsilon})\otimes \K_{\chi^i}$ and $\dim\, \Pp(\K_{\chi^i})=4$ for all $i\in Z_6$.
\end{enumerate}
\end{lem}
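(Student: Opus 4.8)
The plan is to handle the three items in order, since (3) depends on the dimension information obtained along the way. For item (1), I would compute the tensor product of $\D$-modules directly from the definition of the tensor coalgebra structure on $\D = \D(\C^{cop})$: since $\D$ as a coalgebra is the tensor coalgebra $\A_1^{bop}\otimes \C^{cop}$, the comultiplication on the generators $a,b,g,x$ is read off from $\De_{\C^{cop}}$ and $\De_{\A_1^{bop}}$, and the module action on $V_{i,j}\otimes \K_{\chi^k}$ is the diagonal one. Because $\K_{\chi^k}$ is one-dimensional with $\chi^k(a)=\xi^k$, $\chi^k(g)=(-1)^k$, $\chi^k(b)=\chi^k(x)=0$, tensoring simply rescales the eigenvalues: the $[a]$-eigenvalues of $V_{i,j}$ get multiplied by $\xi^k$ and the $[g]$-eigenvalues by $(-1)^k=\xi^{3k}$, while the shapes of $[b]$ and $[x]$ are preserved up to the scalar corrections dictated by the coproduct. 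Matching against the normal form in Lemma \ref{twosimple} with $\Lam_1 = \xi^{i+k}$, $\Lam_2 = \xi^{j+3k}$ gives $V_{i,j}\otimes\K_{\chi^k}\cong V_{i+k,j+3k}$; one must check $(i+k,j+3k)\in\Lambda$, i.e. $3(i+k)\neq j+3k$, which is exactly $3i\neq j$, so $\Lambda$ is stable under this $Z_6$-action. The identity $\K_{\chi^l}\otimes\K_{\chi^k}\cong\K_{\chi^{k+l}}$ is immediate since characters multiply.

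For item (2), the point is that each $V_{i,j}$ is simultaneously simple and projective. The cleanest route is to exhibit $V_{i,j}$ as a direct summand of $\D$ as a left module, or equivalently to observe that the block of $\D$ containing $V_{i,j}$ in its semisimple quotient is a full matrix algebra: since all $36$ simple modules are accounted for by Theorem \ref{thmsimplemoduleD} and $6\cdot 1^2 + 30\cdot 2^2 = 126 < 144 = \dim\D$, the Cartan/decomposition equation $\D\cong\bigoplus_{V\in Irr(\D)}\Pp(V)^{\dim V}$ forces $\sum_V \dim\Pp(V)\cdot\dim V = 144$. If I can show $\Pp(V_{i,j})=V_{i,j}$ for every $(i,j)\in\Lambda$ — contributing $30\cdot 2\cdot 2 = 120$ — then the remaining $24 = 144-120$ is distributed among the six $\Pp(\K_{\chi^i})$, each of dimension $1$, giving $\dim\Pp(\K_{\chi^i}) = 24/6 = 4$, which proves the dimension claim in (3). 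To establish $\Pp(V_{i,j})=V_{i,j}$ directly I would argue that $V_{i,j}$ is projective: the standard tool here is that $\D$ is a symmetric (or at least self-injective Frobenius) algebra, so projective and injective modules coincide; then it suffices to show $V_{i,j}$ is injective, equivalently that $\mathrm{Ext}^1_{\D}(W, V_{i,j}) = 0$ for all simple $W$, which in turn follows from a direct computation that no nonsplit extension of a simple module by $V_{i,j}$ exists — the eigenvalue constraints on $[a]$ and $[g]$ from the defining relations $ax+\xi^{-2}xa = \Lam^{-1}\theta\xi^{-2}(ba^3-gb)$ and $bx+\xi^{-2}xb = \theta\xi^{-2}(a^4-ga)$ are rigid enough to rule this out. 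Alternatively, once (1) is known, it suffices to prove $\Pp(V_{3,1})$ (say) equals $V_{3,1}$ and then twist by $\K_{\chi^k}$: tensoring with an invertible one-dimensional module sends projectives to projectives and $V_{i,j}$ to $V_{i+k,j+3k}$, and the $Z_6$-orbit structure of $\Lambda$ under $(i,j)\mapsto(i+k,j+3k)$ propagates the statement — though the orbits do not cover all $30$ pairs, so one still needs a few base cases.

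Item (3) then combines the previous two: $\Pp(\K_{\chi^i})\cong\Pp(\K_{\epsilon})\otimes\K_{\chi^i}$ because tensoring by the invertible module $\K_{\chi^i}$ is an autoequivalence of ${}_{\D}\M$ carrying the projective cover of $\K_{\epsilon}$ to the projective cover of $\K_{\epsilon}\otimes\K_{\chi^i} = \K_{\chi^i}$ (using $\K_{\epsilon}\otimes\K_{\chi^i}\cong\K_{\chi^i}$ from (1)), and $\dim\Pp(\K_{\chi^i}) = \dim\Pp(\K_{\epsilon}) = 4$ follows from the counting argument above. The main obstacle I anticipate is item (2): proving projectivity of the $V_{i,j}$ rigorously requires either invoking the Frobenius/symmetric structure of the Drinfeld double (a general fact, but one should pin down which property is being used) together with an injectivity check, or else an explicit decomposition of $\D$ that isolates these $2\times 2$ blocks — the bookkeeping with the twisted coproduct of $\C$ makes the latter somewhat delicate. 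Everything else is either a direct matrix computation matched against the normal form of Lemma \ref{twosimple}, or formal nonsense about tensoring with invertible one-dimensional modules.
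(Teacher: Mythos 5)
Your outline for items (1) and (3) matches the paper exactly: (1) is a direct computation, and (3) follows because tensoring with the invertible module $\K_{\chi^i}$ is an autoequivalence of ${}_{\D}\M$ carrying $\Pp(\K_{\epsilon})$ to $\Pp(\K_{\chi^i})$, with the dimension $4$ extracted from $\D\cong\oplus_{V}\Pp(V)^{\dim V}$ and $\dim\D=144$. The genuine divergence is in item (2), which you correctly identify as the crux. You propose to prove that $V_{i,j}$ is projective by using self-injectivity of $\D$ to reduce to injectivity and then checking $\mathrm{Ext}^1_{\D}(W,V_{i,j})=0$ for every simple $W$ by explicit matrix computations with the relations. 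That route is valid in principle, but it is where all the work hides: the extensions $0\to V_{i,j}\to M\to V_{k,\ell}\to 0$ live on $4$-dimensional modules and there are many cases (even after reducing by the twisting action, whose orbits on $\Lambda$ have size $6$, leaving $5$ orbit representatives). Note also that in the paper the vanishing of these Ext groups (Lemma 4.11) is a \emph{consequence} of item (2), not an input to it, so you cannot quote it. The paper instead runs a short counting contradiction: if some $\dim\Pp(V_{i,j})>2$, then self-injectivity forces $\mathrm{Soc}(\Pp(V_{i,j}))\cong V_{i,j}$ and hence $\dim\Pp(V_{i,j})\ge 4$; tensoring with the six characters $\K_{\chi^k}$ propagates this to the whole orbit $\{V_{i+k,j+3k}\}$, and summing $\sum_V\dim\Pp(V)\cdot\dim V$ then exceeds $144$. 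This uses the same self-injectivity fact you invoke but replaces the Ext computations entirely by arithmetic; I would recommend adopting it, since your alternative first suggestion (that the counting equation by itself, or the shape of the semisimple quotient, forces $\Pp(V_{i,j})=V_{i,j}$) is not a proof as stated: the excess $144-126=18$ could a priori be absorbed elsewhere without the orbit-propagation step.
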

\begin{proof}
\begin{enumerate}
  \item follows by a direct calculation.
  \item For any fixed $(i,j)\in\Lambda$. Note that
  \begin{align*}
   \hom(\Pp(V_{i,j})\otimes\K_{\chi^k},V_{i,j}\otimes\K_{\chi^k})&\cong \hom(\Pp(V_{i,j}),V_{i,j}\otimes\K_{\chi^k}\otimes\K_{\chi^k}\As)\\&\cong \hom(\Pp(V_{i,j}),V_{i,j})\neq 0.
   \end{align*}
    Since $\Pp(V_{i,j})\otimes\K_{\chi^k}$ is projective, $\Pp(V_{i,j})\otimes\K_{\chi^k}$ must contain
      $\Pp(V_{i,j}\otimes\K_{\chi^k})\cong \Pp(V_{i+k,j+3k})$. If $\dim\,\Pp(V_{i,j})>\dim\,V_{i,j}$ for some $(i,j)\in\Lambda$, then the socle of $\Pp(V_{i,j})$ is $V_{i,j}$. Thus $\dim\,\Pp(V_{i,j})\geq 2 \dim\,V_{i,j}$ and $\dim\Pp(V_{i-k,j-3k})\geq \dim\Pp(V_{i,j})\geq 4$. Now if we denote $I=\{(m,n)\in\Lambda\mid(m,n)\neq (i+k,j+3k)$ for all $k\in Z_6\}$. Then
      \begin{align*}
      \dim \D=\sum_{i=0}^{5}\dim\, \Pp(\K_{\chi^i})+\sum_{(m.n)\in I}2\dim \Pp(V_{m,n})+ 12\dim\, \Pp(V_{i,j})>144.
      \end{align*}
      It is a contradiction since $\dim\, \D=144$ and then claim follows.
  \item The proof is similar to the above proof.
\end{enumerate}
\end{proof}
\begin{rmk}\label{rmkSocToponedim}
If the simple modules $V_{i,j}$ with $(i,j)\in\Lambda$ are projective, then any $V_{i,j}$ cannot be contained in the socle or the Top of any non-simple indecomposable module, which implies that the Top and the socle of any non-simple indecomposable module consist of direct sums of one-dimensional modules by Theorem $\ref{thmsimplemoduleD}$ and Lemma $\ref{lemProjectwodimsimple}$.
\end{rmk}
Now we try to describe the $\Pp(\K_{\epsilon})$.

Let $\Pp$ be a left $\D$-module. The matrices defining the action on a given basis $\{p_i\}_{1\leq i\leq 4}$  are of the form
\begin{align}
\begin{split}\label{eqprojective0}
    [a]&=\left(\begin{array}{cccc}
                                    1 & 0   &  0  & 0\\
                                    0 & \xi &  0   & 0\\
                                    0 & 0   &  \xi^{-1} &0 \\
                                    0 & 0 & 0 & 1
                                 \end{array}\right),\quad
   [b]=\left(\begin{array}{cccc}
                                    0 & 0   &  0  & 0\\
                                    0 & 0 &  0   & 0\\
                                    \theta & 0   &  0 &0 \\
                                    0 & 1 & 0 & 0
                                 \end{array}\right),\\
   [g]&=\left(\begin{array}{cccc}
                                    1 & 0   &  0  & 0\\
                                    0 & -1 &  0   & 0\\
                                    0 & 0   &  -1 &0 \\
                                    0 & 0 & 0 & 1
                                 \end{array}\right),\quad
   [x]=\left(\begin{array}{cccc}
                                    0 & 0   &  0  & 0\\
                                    \theta & 0 &  0   & 0\\
                                    2 & 0   &  0 &0 \\
                                    0 & 2(1+\xi)\theta & \xi^2 & 0
                                 \end{array}\right).
\end{split}
\end{align}

\begin{lem}\label{lemprojectivecover}
$\Pp(\K_{\epsilon})\cong \Pp$ as $\D$-modules.
\end{lem}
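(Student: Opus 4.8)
The plan is to verify directly that the $4$-dimensional module $\Pp$ defined by the matrices in \eqref{eqprojective0} is projective and that it has $\K_{\epsilon}$ in its head, and then to invoke uniqueness of projective covers to conclude $\Pp(\K_{\epsilon})\cong \Pp$. First I would check that \eqref{eqprojective0} actually defines a $\D$-module, i.e. that the listed matrices $[a],[b],[g],[x]$ satisfy all the defining relations of $\D$: the quantum-linear-space relations $a^6=1$, $b^2=0$, $ba=\xi ab$ coming from $\C^{cop}$; the relations $g^6=1$, $x^2=1-g^2$, $gx=-xg$ coming from $\A_1^{bop}$; and the four cross relations $ag=ga$, $bg=-gb$, $ax+\xi^{-2}xa=\Lam^{-1}\theta\xi^{-2}(ba^3-gb)$, $bx+\xi^{-2}xb=\theta\xi^{-2}(a^4-ga)$. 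This is a finite matrix computation; with $\theta^2=\xi-1$ and $\xi$ a primitive $6$th root of unity all identities should fall out.

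Next I would identify the submodule structure of $\Pp$. From the matrix $[a]$ the eigenvalues are $1,\xi,\xi^{-1},1$ on $p_1,p_2,p_3,p_4$, and $[g]$ acts by $1,-1,-1,1$; comparing with Lemma~\ref{onesimple}, $p_1$ spans a copy of $\K_{\chi^0}=\K_{\epsilon}$ in the head (since $[b],[x]$ send $p_1$ into lower basis vectors, $\K p_1$ is not a submodule but is a quotient), while $p_4$ spans $\K_{\epsilon}$ as well, and $[x]p_4=\xi^2 p_3$, $[b]p_4=p_2$ put $p_4$ at the "top". I would show $\operatorname{Soc}(\Pp)\cong \K_{\epsilon}$ (spanned by $p_1$, the unique line killed by $b,x$ and on which $a,g$ act by the character $\epsilon$, using $3\cdot 0=0$ so $(0,0)\notin\Lambda$ and no two-dimensional simple can appear by Remark~\ref{rmkSocToponedim}) and $\operatorname{Top}(\Pp)\cong \K_{\epsilon}$ (generated by $p_4$: one checks $\D\cdot p_4=\Pp$, so $\Pp$ is cyclic with simple head $\K_{\epsilon}$). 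Hence $\Pp$ is an indecomposable module with simple head $\K_{\epsilon}$.

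To get projectivity I would use the dimension count from Lemma~\ref{lemProjectwodimsimple}: the thirty simples $V_{i,j}$ are projective, contributing $\sum 2\cdot \dim V_{i,j}=30\cdot 2\cdot 2=120$ to the regular representation, and $\dim \Pp(\K_{\chi^i})=4$ for each $i\in Z_6$ contributing $6\cdot 4\cdot 1=24$, for a total of $144=\dim\D$. Since $\Pp$ has the right dimension $4$, it suffices to exhibit a surjection $\Pp(\K_{\epsilon})\twoheadrightarrow \Pp$ or a surjection $\Pp\twoheadrightarrow$ something forcing equality: because $\operatorname{Top}(\Pp)=\K_{\epsilon}$, the projective cover $\Pp(\K_{\epsilon})$ surjects onto $\Pp$, and $\dim \Pp(\K_{\epsilon})=4=\dim\Pp$ forces this surjection to be an isomorphism. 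The main obstacle I anticipate is purely computational: confirming the last two cross relations on the nose for the explicit $4\times 4$ matrices (the coefficients $2$, $2(1+\xi)\theta$, $\xi^2$ in $[x]$ are delicate and must be the unique choice making the relations hold), and making sure $\operatorname{Soc}(\Pp)$ is genuinely one-dimensional rather than larger — for that I would solve the linear system $[b]v=[x]v=0$ together with the eigenvector conditions for $[a],[g]$ and check the solution space is spanned by $p_1$ alone.
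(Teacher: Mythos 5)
Your overall route is the same as the paper's: check that \eqref{eqprojective0} defines a $\D$-module, show that $\Pp$ is cyclic with simple head $\K_{\epsilon}$, lift the quotient map $\Pp\twoheadrightarrow\K_{\epsilon}$ through the projective module $\Pp(\K_{\epsilon})$, and conclude from $\dim\Pp(\K_{\epsilon})=4=\dim\Pp$ (Lemma \ref{lemProjectwodimsimple}). The paper carries out the surjectivity of the lift by hand (any element of the image with nonzero $p_1$-coefficient generates $\Pp$) rather than quoting the general fact that a cyclic module with simple head $S$ is a quotient of $\Pp(S)$, but the content is identical, and your appeal to the decomposition $\D\cong\oplus\Pp(V)^{\dim V}$ is only used, as in the paper, through the already-established equality $\dim\Pp(\K_{\epsilon})=4$.

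There is, however, a concrete error in your reading of \eqref{eqprojective0}: the matrices act by columns, not rows. This is forced by the paper's own computations ($(xb)\cdot p_1=\xi^2\theta p_1$-coefficient landing on $p_4$, $b\cdot\alpha=\theta\alpha_1p_3+\alpha_2p_4$, $x\cdot p_1=\theta p_2+2p_3$) and by the coaction formula $\delta(p_{4,j})=(a^3)^j\otimes p_{4,j}$ later in the paper. Hence $b\cdot p_4=0=x\cdot p_4$ while $b\cdot p_1=\theta p_3\neq0$: it is $p_4$ that spans the one-dimensional submodule and $p_1$ that generates $\Pp$. Your assertions $[b]p_4=p_2$, $[x]p_4=\xi^2p_3$, ``$\D\cdot p_4=\Pp$'' and ``$Soc(\Pp)$ is spanned by $p_1$'' are all false as written (and inconsistent with your own earlier, correct, observation that $[b],[x]$ push $p_1$ into the other basis vectors, so that $\K p_1$ is a quotient rather than a submodule). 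The slip is harmless to the final conclusion only because both $p_1$ and $p_4$ afford the character $\epsilon$ on $a$ and $g$: after interchanging their roles, $Top(\Pp)\cong\K_{\epsilon}$ is realized as $\Pp/\mathrm{span}\{p_2,p_3,p_4\}$, the module is generated by $p_1$, and the rest of your argument goes through exactly as in the paper. You should redo the socle/top identification with the correct convention before relying on the specific linear-algebra checks you describe.
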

\begin{proof}
A direct computation shows that $\Pp$ as a $\D$-module is well-defined, $\K\{p_4\}$ $\cong \K_{\epsilon}$ and $\Pp/Q\cong \K_{\epsilon}$ as $\D$-module, where $Q=\{p_2,p_3,p_4\}$.

We first claim that $\Pp$ is indecomposable. Assume that $\Pp=A\oplus B$ as $\D$-module and let $\alpha=\alpha_1p_1+\alpha_2p_2+\alpha_3p_3+\alpha_4p_4\in A$. If $\alpha_1\neq 0$, then $(xb)\cdot\alpha=\xi^2\theta\alpha_1p_4\in A$ and thus $p_4\in A$. Analogously, $b\cdot\alpha=\theta\alpha_1p_3+\alpha_2p_4$ which implies $p_3\in A$. Thus $\alpha_1p_1+\alpha_2p_2\in A$. since $x\cdot(\alpha_1p_1+\alpha_2p_2)=\theta\alpha_1p_2+2\alpha_1p_3+2(1+\xi)\theta\alpha_2p_4\in A$, we have $p_2\in A$ and consequently $p_1\in A$, which implies $A=\Pp$. If $\alpha_1=0$, then $p_1\in B$ and we have $B=\Pp$ since $\Pp$ can be generated by $p_1$. Hence $\Pp$ is indecomposable.

Now we try to show that $\Pp(\K_{\epsilon})\cong \Pp$. Denote $p:\Pp\mapsto \Pp/Q\cong \K_{\epsilon}$ and it is clear that $p$ is a surjective. Since $\Pp(\K_{\epsilon})$ is projective, there exists an $\D$-module morphism denoted by $f$ such that the following diagram commutes
\begin{align*}
\xymatrix{
  \Pp(\K_{\epsilon}) \ar[d]_{f} \ar[dr]^{\pi}        \\
  \Pp \ar[r]_{p}  & \K_{\epsilon}              }
\end{align*}
Thus there exists some $\alpha=\alpha_1p_1+\alpha_2p_2+\alpha_3p_3+\alpha_4p_4\in f(\Pp(\K_{\epsilon}))$ with $\alpha_1\neq 0$ which implies $f$ is surjective and whence an isomorphism.
\end{proof}

From the lemma $\ref{lemProjectwodimsimple}$, we have known $\Pp(\K_{\chi^i})=\Pp(\K_{\epsilon})\otimes \K_{\chi^i}$.
Then as a result of lemma $\ref{lemprojectivecover}$, we can describe the module structure of $\Pp(\K_{\chi^i})$ explicitly.
\begin{pro}
The $\D$-modules $\Pp_i=\Pp\otimes \Pp(\K_{\epsilon})$ with $i\in Z_6$ and $V_{j,k}$ with $(j,k)\in\Lambda$ are the projective covers of the simple $\D$-modules $\K_{\chi^i}$ and $V_{j,k}$ respectively. Moreover, for $j\in Z_6$, $\{p_{i,j}\}_{1\leq i\leq 4}$ is a linear basis of $\Pp_j$ with $p_{i,0}=p_i$, and the $\D$-module structure of $\Pp_j$ can be given explicitly by
\begin{align}
\begin{split}\label{eqprojectivei}
a\cdot p_{i,j}&=a\cdot(p_i\otimes 1)=a\cdot p_i\otimes a\cdot 1+\Lam^{-1}ba^3\cdot p_i\otimes b\cdot 1=\xi^j(a\cdot p_i)\otimes 1,\\
b\cdot p_{i,j}&=b\cdot(p_i\otimes 1)=b\cdot p_i\otimes a\cdot 1+a^4\cdot p_i\otimes b\cdot 1=\xi^j(b\cdot p_i)\otimes 1,\\
g\cdot p_{i,j}&=g\cdot(p_i\otimes 1)=g\cdot p_i\otimes g\cdot 1=(-1)^j(g\cdot p_i)\otimes 1,\\
x\cdot p_{i,j}&=x\cdot(p_i\otimes 1)=1\cdot p_i\otimes x\cdot 1+x\cdot p_i\otimes g\cdot 1=(-1)^j(x\cdot p_i)\otimes 1.
\end{split}
\end{align}
\end{pro}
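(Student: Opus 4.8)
The plan is to assemble the statement from the pieces already proved. By Lemma~\ref{lemProjectwodimsimple}(2), each two-dimensional simple module $V_{j,k}$ with $(j,k)\in\Lambda$ is its own projective cover, so nothing more is needed there. By Lemma~\ref{lemProjectwodimsimple}(3) together with Lemma~\ref{lemprojectivecover}, we know $\Pp(\K_{\chi^i})\cong\Pp(\K_\epsilon)\otimes\K_{\chi^i}\cong\Pp\otimes\K_{\chi^i}$; so it only remains to identify the concrete module $\Pp_i:=\Pp\otimes\Pp(\K_{\epsilon})$ — wait, here one must be careful: the Proposition writes $\Pp_i=\Pp\otimes\Pp(\K_\epsilon)$, but in view of $\Pp(\K_\epsilon)\cong\Pp$ and $\Pp(\K_{\chi^i})\cong\Pp\otimes\K_{\chi^i}$, the intended object is $\Pp_i\cong\Pp\otimes\K_{\chi^i}$, the one-dimensional twist; I would phrase the proof around this identification and spell out its basis and action.

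First I would fix $i=j\in Z_6$ and take the basis $\{p_r\otimes 1\}_{1\le r\le 4}$ of $\Pp\otimes\K_{\chi^j}$, writing $p_{r,j}:=p_r\otimes 1$ (so $p_{r,0}=p_r$, recovering the given normalization). Then I would compute the action of each generator $a,b,g,x$ on $p_{r,j}$ using the tensor-product $\D$-module structure, i.e.\ the coproduct of $\D$ restricted to these generators. The relevant coproducts are $\De(a)=a\otimes a+\Lam^{-1}b\otimes ba^3$, $\De(b)=b\otimes a^4+a\otimes b$, $\De(g)=g\otimes g$, and $\De(x)=x\otimes 1+g\otimes x$ (the last two from $\A_1^{bop}$, noting $\epsilon=1$ on the $x$-slot). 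Applying these to $p_r\otimes 1$ and using that on the one-dimensional factor $\K_{\chi^j}$ one has $a\cdot 1=\xi^j$, $b\cdot 1=0$, $g\cdot 1=(-1)^j$, $x\cdot 1=0$, every term with a $b\otimes(-)$ or $(-)\otimes x$ contribution kills the second tensor factor, leaving exactly the four displayed formulas in \eqref{eqprojectivei}: $a\cdot p_{r,j}=\xi^j(a\cdot p_r)\otimes 1$, $b\cdot p_{r,j}=\xi^j(b\cdot p_r)\otimes 1$, $g\cdot p_{r,j}=(-1)^j(g\cdot p_r)\otimes 1$, $x\cdot p_{r,j}=(-1)^j(x\cdot p_r)\otimes 1$.

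Having the explicit action, I would then note that $\Pp\otimes\K_{\chi^j}$ is projective (tensor of a projective with a one-dimensional, equivalently invertible, module), that it is indecomposable (it is the twist of the indecomposable $\Pp$ by an invertible object, so indecomposability is preserved — or one can rerun the argument of Lemma~\ref{lemprojectivecover} verbatim with the scalars inserted), and that its unique simple quotient is $\K_\epsilon\otimes\K_{\chi^j}\cong\K_{\chi^j}$, whence $\Pp\otimes\K_{\chi^j}\cong\Pp(\K_{\chi^j})$ by uniqueness of projective covers. Combined with Lemma~\ref{lemProjectwodimsimple}(2) for the $V_{j,k}$, this gives the full list of projective covers and the decomposition $\D\cong\bigoplus_i\Pp_i\oplus\bigoplus_{(j,k)\in\Lambda}V_{j,k}^{\,2}$ (a dimension check: $6\cdot 4+30\cdot 4=144=\dim\D$ confirms consistency). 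I do not expect any genuine obstacle here; the only point requiring care is bookkeeping — making sure the matrix $[x]$ in \eqref{eqprojective0} (which contains the entries $\theta$, $2$, $2(1+\xi)\theta$, $\xi^2$) really does intertwine correctly under the twist and that the stated $p_{r,j}$ are consistent across the tensor identification, i.e.\ reconciling the ``$\Pp_i=\Pp\otimes\Pp(\K_\epsilon)$'' notation with ``$\Pp_i\cong\Pp\otimes\K_{\chi^i}$'' via $\Pp(\K_\epsilon)\cong\Pp$ and $\Pp(\K_{\chi^i})\cong\Pp(\K_\epsilon)\otimes\K_{\chi^i}$.
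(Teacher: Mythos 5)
Your overall strategy is exactly what the paper intends: the proposition is stated without a separate proof and is meant to follow by combining Lemma~\ref{lemProjectwodimsimple} (which already gives $\Pp(V_{j,k})=V_{j,k}$ and $\Pp(\K_{\chi^i})\cong\Pp(\K_\epsilon)\otimes\K_{\chi^i}$) with Lemma~\ref{lemprojectivecover} ($\Pp(\K_\epsilon)\cong\Pp$), and then writing out the tensor-product action on $\Pp\otimes\K_{\chi^j}$. You also correctly spotted that ``$\Pp_i=\Pp\otimes\Pp(\K_\epsilon)$'' in the statement is a typo for $\Pp\otimes\K_{\chi^i}$, and your arguments for projectivity, indecomposability and identification of the top are fine.

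There is, however, one concrete error in the computational step. The tensor product of $\D$-modules is taken with the coproduct of $\D=\D(\C^{cop})$, whose coalgebra is $\A_1^{bop}\otimes\C^{cop}$, so the relevant coproducts are the \emph{opposite} ones: $\De_\D(b)=b\otimes a+a^4\otimes b$ and $\De_\D(x)=1\otimes x+x\otimes g$, which is what the middle expressions in \eqref{eqprojectivei} use. You instead wrote the coproducts of $\C$ and $\A_1$ themselves, $\De(b)=b\otimes a^4+a\otimes b$ and $\De(x)=x\otimes 1+g\otimes x$. With those, the surviving terms on $p_r\otimes 1$ are $(b\cdot p_r)\otimes(a^4\cdot 1)=\xi^{4j}(b\cdot p_r)\otimes 1$ and $(x\cdot p_r)\otimes(1\cdot 1)=(x\cdot p_r)\otimes 1$, i.e.\ the scalars $\xi^{4j}$ and $1$ rather than the $\xi^{j}$ and $(-1)^j$ claimed in \eqref{eqprojectivei}; so your computation as stated does not ``leave exactly the four displayed formulas.'' (For $a$ and $g$ the discrepancy is invisible, since the extra term of $\De(a)$ dies on the one-dimensional factor in either convention and $g$ is group-like.) This does not affect the qualitative conclusion that $\Pp\otimes\K_{\chi^j}\cong\Pp(\K_{\chi^j})$, but since the explicit formulas are part of the statement, you need to redo the $b$- and $x$-computations with the $cop$ coproducts.
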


Now we start to study the representation type of the Drinfeld double $\D(\C^{cop})$. To do so, we begin to find
$2$-dimensional non-simple indecomposable modules.

 Consider the subalgebra $A$ of $\D$ generated by the elements $a$ and $g$. It is clear that $A$ is a finite-dimensional commutative algebra, and it is simple modules are all one-dimensional. In particular, the restriction to $A$ of characters of $\D$ can induce characters on $A$.

Let $M$ be any $2$-dimensional non-simple indecomposable $D$-module contained $\K_{\Lam}$ with $\Lam=\chi^l$. We must have that $M\cong \K_{\Lam}\oplus\K_{\mu}$ as $A$-modules, with $\mu$ some character on $\D$. Thus, $M$ has a linear basis $\{m_1, m_2\}$, such that $\K m_1\cong \K_{\Lam}$, $a\cdot m_2=\mu(a)m_2,\,g\cdot m_2=\mu(g)m_2$, and fits into an exact sequence
\begin{align*}
0\rightarrow \K_{\Lam}\hookrightarrow M\rightarrow \K_{\mu}\twoheadrightarrow 0.
\end{align*}
Then we must have that $b\cdot m_2= \alpha m_1,\ x\cdot m_2=\beta m_1$ for some $\alpha, \beta\in\K$.
As above, given the basis $\{m_1, m_2\}$, the matrices defining $\D$-action on $M$ are of the form
\begin{align*}
    [a]&=\left(\begin{array}{ccc}
                                   \Lam(a) & 0\\
                                    0  & \mu(a)
                                 \end{array}\right),\quad
    [b]=\left(\begin{array}{ccc}
                                   0 & \alpha\\
                                   0 & 0
                                 \end{array}\right),\\
    [g]&=\left(\begin{array}{ccc}
                                   \Lam(g) & 0\\
                                   0   & \mu(g)
                                 \end{array}\right),\quad
    [x]=\left(\begin{array}{ccc}
                                   0 & \beta\\
                                   0 & 0
                                 \end{array}\right).
\end{align*}
From the relations $gx=-xg$ and $gb=-bg$, we have that $(\Lam(g)+\mu(g))\beta=0$ and $(\Lam(g)+\mu(g))\alpha=0$. If $\Lam(g)+\mu(g)\neq 0$, then $\beta=0=\alpha$. In such a case, it is clear that $M\cong \K_{\Lam}\oplus\K_{\mu}$ as $\D$-modules, a contradiction. Thus $\Lam(g)+\mu(g)= 0$. Moreover, from the relation $bx+\xi^{-2}xb=\theta\xi^{-2}(a^4-ga)$, we have $\Lam(a^3)=\Lam(g)$ and $\mu(a^3)=\mu(g)$, which imply $\mu(a^3)=-\Lam(a^3)$. And from the relation $ax+\xi^{-2}xa=\Lam^{-1}\theta\xi^{-2}(ba^3-gb)$, we have $(\Lam(a)+\xi^{-2}\mu(a))\beta=\Lam^{-1}\theta\xi^{-2}(\mu(a^3)-\Lam(g))\alpha$ which implies
\begin{align}\label{equtwononsimple}
(\Lam(a)+\xi^{-2}\mu(a))\beta=2\Lam^{-1}\theta\xi^{-2}\mu(a^3)\alpha,
\end{align}
since $\mu(a^3)=-\Lam(g)$.

From the discussion above, $\mu(a)=\xi\Lam(a),\,\xi^3\Lam(a)$ or $\xi^5\Lam(a)$.
If $\mu(a)=\xi\Lam(a)$,  then by the equation $\eqref{equtwononsimple}$, we have $-\theta^3\beta=2\xi\Lam(a^2)\alpha$. We may assume that $\alpha=-\theta^3$ and $\beta=2\xi\Lam(a^2)$. In such a case, we denote the module by $M_l^{+}$, and we have the following result
\begin{align*}
    [a]_l^{+}&=\left(\begin{array}{ccc}
                                   \chi^l(a) & 0\\
                                    0  & \chi^{l+1}(a)
                                 \end{array}\right),\quad
    [b]_l^{+}=\left(\begin{array}{ccc}
                                   0 & -\theta^3\\
                                   0 & 0
                                 \end{array}\right),\\
    [g]_l^{+}&=\left(\begin{array}{ccc}
                                   \chi^l(g) & 0\\
                                   0   & \chi^{l+1}(g)
                                 \end{array}\right),\quad
    [x]_l^{+}=\left(\begin{array}{ccc}
                                   0 & 2\xi\chi^l(a^2)\\
                                   0 & 0
                                 \end{array}\right),
\end{align*}
\begin{align*}
0\rightarrow \K_{\chi^l}\hookrightarrow M^{+}\rightarrow \K_{\chi^{l+1}}\twoheadrightarrow 0.
\end{align*}
If $\mu(a)=\xi^3\Lam(a)$, then by the equation $\eqref{equtwononsimple}$, we have $\theta\beta=2\xi\Lam(a^2)\alpha$. We may assume that $\alpha=\theta$ and $\beta=2\xi\Lam(a^2)$. In such a case, we denote the module by $M_l^{\pm}$, and we have the following result
\begin{align*}
    [a]_l^{\pm}&=\left(\begin{array}{ccc}
                                   \chi^l(a) & 0\\
                                    0  & \chi^{l\pm 3}(a)
                                 \end{array}\right),\quad
    [b]_l^{\pm}=\left(\begin{array}{ccc}
                                   0 & \theta\\
                                   0 & 0
                                 \end{array}\right),\\
    [g]_l^{\pm}&=\left(\begin{array}{ccc}
                                   \chi^l(g) & 0\\
                                   0   & \chi^{l\pm 3}(g)
                                 \end{array}\right),\quad
    [x]_l^{\pm}=\left(\begin{array}{ccc}
                                   0 & 2\xi\chi^l(a^2),\\
                                   0 & 0
                                 \end{array}\right),
\end{align*}
\begin{align*}
0\rightarrow \K_{\chi^l}\hookrightarrow M^{\pm}\rightarrow \K_{\chi^{l\pm 3}}\twoheadrightarrow 0.
\end{align*}
If $\mu(a)=\xi^5\Lam(a)$, then by the equation $\eqref{equtwononsimple}$, we have $\alpha=0$. We may assume that $\beta=1$. In such a case, we denote the module by $M_{l}^{-}$, and we have the following result
\begin{align*}
    [a]_l^{-}&=\left(\begin{array}{ccc}
                                   \chi^l(a) & 0\\
                                    0  & \chi^{l-1}(a)
                                 \end{array}\right),\quad
    [b]_l^{-}=\left(\begin{array}{ccc}
                                   0 & 0\\
                                   0 & 0
                                 \end{array}\right),\\
    [g]_l^{-}&=\left(\begin{array}{ccc}
                                   \chi^l(g) & 0\\
                                   0   & \chi^{l-1}(g)
                                 \end{array}\right),\quad
    [x]_l^{-}=\left(\begin{array}{ccc}
                                   0 & 1\\
                                   0 & 0
                                 \end{array}\right),
\end{align*}
\begin{align*}
0\rightarrow \K_{\chi^l}\hookrightarrow M^{-}\rightarrow \K_{\chi^{l-1}}\twoheadrightarrow 0.
\end{align*}

By the preceding discussion, we have the following result:
\begin{lem}\label{lemTwononsimpleindecom}
\begin{enumerate}
  \item Let $M$ be a $2$-dimensional non-simple indecomposable module containing $\K_{\chi^{\ell}}$. Then
        $M\cong M_l^{+}$, $M\cong M_l^{-}$, or $M\cong M_l^{\pm}$.
  \item
  \begin{align*}
         \dim \,Ext_{\D}^1(\K_{\chi^i},\K_{\chi^j})=
        \begin{cases}
               1, &\text{~if~} i=j\pm 1,\text{~or~}i=j\pm 3,\\
               0, & \text{otherwise}.
        \end{cases}
        \end{align*}
\end{enumerate}
\end{lem}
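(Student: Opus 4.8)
The first assertion is already the conclusion of the case analysis carried out immediately before the statement, so my plan for it is merely to assemble that analysis. Given a $2$-dimensional non-simple indecomposable $\D$-module $M$ with $\K_{\chi^{\ell}}\hookrightarrow M$, restriction to the semisimple subalgebra $A=\langle a,g\rangle$ splits $M$ as $\K_{\chi^{\ell}}\oplus\K_{\mu}$ for a character $\mu$; the relations $gb=-bg$ and $gx=-xg$ force $\mu(g)=-\chi^{\ell}(g)$ and reduce the remaining data to two scalars $\alpha,\beta$ (the off-diagonal entries of $b$ and $x$); the relation obtained from $bx+\xi^{-2}xb$ forces $\mu(a^{3})=-\chi^{\ell}(a^{3})$, hence $\mu(a)\chi^{\ell}(a)^{-1}\in\{\xi,\xi^{3},\xi^{5}\}$; and the relation from $ax+\xi^{-2}xa$ yields the single scalar identity \eqref{equtwononsimple} between $\alpha$ and $\beta$. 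In each of the three cases a normalized choice of $(\alpha,\beta)$ pins down $M$ up to isomorphism, and these are exactly the modules $M_{\ell}^{+}$, $M_{\ell}^{\pm}$ and $M_{\ell}^{-}$; thus part (1) needs nothing new.

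For part (2) I would compute $\dim Ext_{\D}^{1}(\K_{\chi^{i}},\K_{\chi^{j}})$ through its standard description as the space of equivalence classes of short exact sequences $0\to\K_{\chi^{j}}\to M\to\K_{\chi^{i}}\to 0$, for which $M$ is a $2$-dimensional module of the type treated in part (1) with sub $\K_{\chi^{j}}$ and quotient $\K_{\chi^{i}}$. If $i-j$ is even, then $\chi^{i}(g)=\chi^{j}(g)$, so $\chi^{j}(g)+\mu(g)=2\chi^{j}(g)\neq 0$ with $\mu=\chi^{i}$, and the analysis recalled before the statement forces $\alpha=\beta=0$, i.e.\ the sequence splits; hence $Ext_{\D}^{1}(\K_{\chi^{i}},\K_{\chi^{j}})=0$. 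Since ``$i=j\pm1$ or $i=j\pm3$'' is precisely ``$i-j$ odd in $Z_{6}$'' (because $j+3=j-3$ there), this settles the vanishing branch.

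It remains to show $\dim Ext_{\D}^{1}(\K_{\chi^{i}},\K_{\chi^{j}})=1$ when $i-j$ is odd. In that case $i\neq j$ in $Z_{6}$, so in any such $M$ the submodule $\K_{\chi^{j}}$ and a lift $m_{2}$ of the fixed generator of the quotient are canonically normalized by requiring $a$ to act diagonally; the extension is then completely encoded by the pair $(\alpha,\beta)$ with $b\cdot m_{2}=\alpha m_{1}$, $x\cdot m_{2}=\beta m_{1}$, subject only to \eqref{equtwononsimple}. This gives a linear isomorphism of $Ext_{\D}^{1}(\K_{\chi^{i}},\K_{\chi^{j}})$ with the solution set of \eqref{equtwononsimple}, which is a line: the coefficient of $\alpha$ in \eqref{equtwononsimple}, namely $2\Lam^{-1}\theta\xi^{-2}\mu(a^{3})$, is a product of nonzero scalars, so the equation never degenerates to $0=0$. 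Hence the dimension equals $1$. The step I expect to need the most care is exactly this last one: checking that \eqref{equtwononsimple} stays non-trivial on the whole odd locus and that ``admissible $(\alpha,\beta)$'' really is a complete set of representatives for $Ext_{\D}^{1}(\K_{\chi^{i}},\K_{\chi^{j}})$ — it is this reduction that turns the computation into a single linear equation in two unknowns.
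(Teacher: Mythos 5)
Your identification of $Ext_{\D}^1(\K_{\chi^i},\K_{\chi^j})$ with the solution space of the linear constraints on $(\alpha,\beta)$ is the right idea (and your remark that distinct admissible pairs give inequivalent extensions is correct, since $\chi^i(a)\neq\chi^j(a)$ pins down the lift $m_2$), as is the vanishing when $i-j$ is even. But the reduction to ``$(\alpha,\beta)$ subject \emph{only} to \eqref{equtwononsimple}'' is incomplete: like the discussion preceding the lemma, you never impose the relation $ba=\xi ab$, which holds in $\D$ because $\C^{cop}$ is a subalgebra. On $M$ this relation reads $\alpha\,(\mu(a)-\xi\Lam(a))=0$, so $\alpha$ is forced to vanish unless $\mu(a)=\xi\Lam(a)$, i.e.\ unless $i=j+1$. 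For $i=j+1$ nothing changes and the solution set of \eqref{equtwononsimple} is a line; for $i=j-1$ one gets $\alpha=0$ while $\xi^{-2}\mu(a)=\xi^{-3}\Lam(a)=-\Lam(a)$, so the $\beta$-coefficient in \eqref{equtwononsimple} vanishes, $\beta$ is free, and the dimension is again $1$. But for $i=j+3$ the extra relation gives $\alpha=0$, and then \eqref{equtwononsimple} becomes $\Lam(a)(1+\xi)\beta=0$ with $1+\xi\neq 0$, so $\beta=0$ as well: every such extension splits and $\dim Ext_{\D}^1(\K_{\chi^{j+3}},\K_{\chi^j})=0$, not $1$. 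Correspondingly $M_l^{\pm}$ fails $ba=\xi ab$ (its $(1,2)$ entries give $-\theta\xi^{l}$ versus $\theta\xi^{l+1}$), so it is not a $\D$-module, and part (1) should list only $M_l^{+}$ and $M_l^{-}$.

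A structural count confirms this independently of any matrix computation. The indecomposable projective $\Pp(\K_{\epsilon})$ is $4$-dimensional with simple top; since $\D$ is a finite-dimensional Hopf algebra it is self-injective, so $\Pp(\K_{\epsilon})$ also has simple socle, whence its radical cannot be semisimple and $\dim\bigl(\mathrm{rad}\,\Pp(\K_{\epsilon})/\mathrm{rad}^2\,\Pp(\K_{\epsilon})\bigr)\leq 2$. Therefore $\sum_{j}\dim Ext_{\D}^1(\K_{\epsilon},\K_{\chi^{j}})\leq 2$, which is incompatible with three nonzero $Ext$-groups at $j=1,3,5$. The correct version of part (2) is that the dimension equals $1$ exactly when $i=j\pm 1$ and $0$ otherwise; your argument does prove this once $ba=\xi ab$ is added to the list of constraints, but as written it reproduces the gap in the statement rather than a proof of it.
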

\begin{rmk}\label{rmksocletopTwoindec}
Note that $Soc(M_l^{+})=\K_{\chi^{l}}$, $Top(M_l^{+})=\K_{\chi^{l+1}}$, $Soc(M_l^{\pm})=\K_{\chi^{l}}$, $Top(M_l^{\pm})=\K_{\chi^{l\pm 3}}$, and $Soc(M_l^{-})=\K_{\chi^{l}}$, $Top(M_l^{-})=\K_{\chi^{l-1}}$.
\end{rmk}

By Lemma $\ref{lemProjectwodimsimple}$, we know that $V_{i,j}$ for any $(i,j)\in\Lambda$ is projective, thus we can get following lemma and as a corollary, $\D$ is of wild representation type.
\begin{lem}\label{lemExtproperty}
\begin{enumerate}
  \item $\dim\,Ext_{\D}^1(V_{i,j}, V_{k,\ell})=0$ for all $(i,j),(k,\ell)\in\Lambda$.
  \item $\dim\,Ext_{\D}^1(V_{i,j}, \K_{\chi^{\ell}})=0$ and $\dim\, Ext_{\D}^1(\K_{\chi^{\ell}},V_{i,j})=0$ for all $(i,j)\in\Lambda$, and $\ell\in Z_6$.
\end{enumerate}
\end{lem}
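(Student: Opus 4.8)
The plan is to leverage the projectivity of the two-dimensional simple modules $V_{i,j}$, which was established in Lemma~\ref{lemProjectwodimsimple}, together with the general fact that $\D$, being a finite-dimensional Hopf algebra, is a Frobenius algebra (by the Larson--Sweedler theorem) and hence self-injective; in particular projective $\D$-modules coincide with injective $\D$-modules.

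First I would observe that for a fixed $(i,j)\in\Lambda$ the module $V_{i,j}$ is projective, so the functor $\hom_{\D}(V_{i,j},-)$ is exact and therefore $Ext^1_{\D}(V_{i,j},N)=0$ for every finite-dimensional $\D$-module $N$. Taking $N=V_{k,\ell}$ yields part~(1), and taking $N=\K_{\chi^{\ell}}$ yields the first vanishing asserted in part~(2).

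It remains to prove $Ext^1_{\D}(\K_{\chi^{\ell}},V_{i,j})=0$. Here I would invoke self-injectivity of $\D$: since $V_{i,j}$ is projective it is also injective, so $\hom_{\D}(-,V_{i,j})$ is exact and $Ext^1_{\D}(M,V_{i,j})=0$ for all $M$, in particular for $M=\K_{\chi^{\ell}}$. If one prefers to bypass the Frobenius property, the same conclusion follows by duality: $Ext^1_{\D}(\K_{\chi^{\ell}},V_{i,j})\cong Ext^1_{\D}(V_{i,j}\As,\K_{\chi^{\ell}}\As)$, and by the remark after Lemma~\ref{twosimple} one has $V_{i,j}\As\cong V_{-i-1,-j-3}$ with $(-i-1,-j-3)\in\Lambda$, which is again projective, so this Ext group vanishes by the previous paragraph.

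There is no genuine obstacle in this argument; the only points requiring a line of justification are the self-injectivity of $\D$ (or, alternatively, the observation that the dual of each $V_{i,j}$ is again one of the projective simple modules $V_{k,\ell}$, which is immediate from the explicit description of $V_{i,j}\As$), and the routine check that $(-i-1,-j-3)$ indeed lies in $\Lambda$.
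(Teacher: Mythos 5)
Your proposal is correct and is essentially the paper's own (implicit) argument: the paper derives this lemma directly from the projectivity of the $V_{i,j}$ established in Lemma \ref{lemProjectwodimsimple}, and your filling-in via exactness of $\hom_{\D}(V_{i,j},-)$ plus self-injectivity of the finite-dimensional Hopf algebra $\D$ (or, equivalently, the duality $V_{i,j}\As\cong V_{-i-1,-j-3}$ with $(-i-1,-j-3)\in\Lambda$) is exactly the intended justification.
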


\begin{cor}
$\D$ is of wild representation type.
\end{cor}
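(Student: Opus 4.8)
The plan is to reduce to an algebra with radical square zero and apply Proposition~\ref{proRepType}. Write $J=\operatorname{rad}\D$ and $\bar\D:=\D/J^2$; then $\bar\D$ has radical square zero, and $\operatorname{Ext}^1_{\bar\D}(S,T)=\operatorname{Ext}^1_{\D}(S,T)$ for all simple modules $S,T$, so $\bar\D$ and $\D$ have the same Ext-quiver and hence the same separated quiver. Moreover every $\bar\D$-module is a $\D$-module and the inclusion $\bar\D\text{-mod}\hookrightarrow\D\text{-mod}$ is exact and preserves indecomposability, so a representation embedding $\K\langle x,y\rangle\text{-mod}\to\bar\D\text{-mod}$ composes with it to a representation embedding into $\D\text{-mod}$; thus it suffices to prove that $\bar\D$ is of wild representation type, and for that we may use Proposition~\ref{proRepType}.

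Next I would write down the Ext-quiver of $\D$ using Section~\ref{secPresentation}. By Theorem~\ref{thmsimplemoduleD} the simple $\D$-modules are the six characters $\K_{\chi^i}$, $i\in Z_6$, together with the thirty modules $V_{i,j}$, $(i,j)\in\Lambda$. By Lemma~\ref{lemExtproperty} we have $\operatorname{Ext}^1_\D(V_{i,j},-)=0=\operatorname{Ext}^1_\D(-,V_{i,j})$, so the vertices labelled $V_{i,j}$ are isolated. By Lemma~\ref{lemTwononsimpleindecom}(2) there is exactly one arrow $\K_{\chi^i}\to\K_{\chi^j}$ precisely when $j\in\{i+1,\,i-1,\,i+3\}$ (note $i-3\equiv i+3$ in $Z_6$, so the two ``$\pm3$'' cases give a single arrow $\K_{\chi^i}\to\K_{\chi^{i+3}}$).

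Then I would form the separated quiver and identify its underlying graph $\Gamma_\D$. The isolated vertices $V_{i,j}$ contribute $60$ isolated vertices and are irrelevant. On the remaining $12$ vertices $\K_{\chi^i}$, $\K_{\chi^i}^\ast$ the separated quiver has an edge joining $\K_{\chi^i}$ to $\K_{\chi^j}^\ast$ for each $j\in\{i+1,i-1,i+3\}$. A short check of these congruences shows that each of $\K_{\chi^0},\K_{\chi^2},\K_{\chi^4}$ is joined to each of $\K_{\chi^1}^\ast,\K_{\chi^3}^\ast,\K_{\chi^5}^\ast$ and to no other starred vertex, and dually each of $\K_{\chi^1},\K_{\chi^3},\K_{\chi^5}$ is joined to each of $\K_{\chi^0}^\ast,\K_{\chi^2}^\ast,\K_{\chi^4}^\ast$. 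Hence the nontrivial part of $\Gamma_\D$ is the disjoint union of two copies of the complete bipartite graph $K_{3,3}$.

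To conclude: $K_{3,3}$ has $6$ vertices and $9$ edges, whereas every connected component of a finite or affine (Euclidean) Dynkin diagram is either a tree, with one fewer edge than vertices, or a cycle (type $\widetilde{A}_n$), with as many edges as vertices, so in particular it has at most as many edges as vertices. Therefore $\Gamma_\D$ is not a disjoint union of finite and affine Dynkin diagrams, so by Proposition~\ref{proRepType} the radical-square-zero algebra $\bar\D$ is of neither finite nor tame representation type, hence (by the tame-wild dichotomy) of wild representation type; by the first paragraph $\D$ is of wild representation type as well. The only delicate point is the combinatorics of the third step: one must track how the ``$\pm1$'' and ``$\pm3$'' edges interlock so that the bipartite separated quiver on the six character-vertices splits into two copies of $K_{3,3}$ rather than into one connected graph; once this picture is in hand the rest is formal.
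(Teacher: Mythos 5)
Your proof is correct and follows essentially the same route as the paper: read off the arrows among the characters $\K_{\chi^i}$ from Lemma \ref{lemTwononsimpleindecom}, note via Lemma \ref{lemExtproperty} that the $V_{i,j}$ are isolated vertices, observe that the separated quiver decomposes into two copies of $K_{3,3}$, and conclude wildness from Proposition \ref{proRepType}. The only difference is that you make explicit the (standard) reduction to the radical-square-zero quotient $\D/J^2$ and the passage back to $\D$, a step the paper's proof leaves implicit when it applies Proposition \ref{proRepType} directly to $\D$.
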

\begin{proof}
From the Lemma $\ref{lemExtproperty}$, we know that $ExtQ(\D)$ contains the quiver
$$
\xymatrix{
{\circ}^1\ar@{<-}[r]\ar@<-0.6mm>[r]\ar@{<-}[1,2]\ar@<-0.6mm>[1,2]  & {\circ}^{2}\ar@{<-}[r]\ar@<-0.6mm>[r]\ar@<-1.0mm>[d]  &  {\circ}^{3}\ar@{<-}[d]\ar@{<-}[1,-2]\ar@<-0.8mm>[1,-2]\ar@<-0.8mm>[d]\\
{\circ}^{6}\ar@{<-}[r]\ar@<-0.6mm>[u]\ar@{<-}[u]\ar@<-0.6mm>[r]  & {\circ}^{5}\ar@{<-}[r]\ar@{->}[u]\ar@<-0.6mm>[r]             & {\circ}^{4}
}
$$
where the vertex $i$ represents the one-dimensional simple module $\K_{\chi^{i}}$ for $i\in Z_6$. Thus the separation diagram of $\D$ contains the quivers as follows
$$
\xymatrix{
{\circ}^1\ar@{-}[r]\ar@{-}[1,2]  & {\circ}^{2\As}\ar@{-}[r]  &  {\circ}^{3}\ar@{-}[d]\ar@{-}[1,-2]\\
{\circ}^{6\As}\ar@{-}[r]\ar@{-}[u]              & {\circ}^{5}\ar@{-}[r]\ar@{-}[u]             & {\circ}^{4\As}
}
\xymatrix{
{\circ}^{1\As}\ar@{-}[r]\ar@{-}[1,2]  & {\circ}^{2}\ar@{-}[r]  &  {\circ}^{3\As}\ar@{-}[d]\ar@{-}[1,-2]\\
{\circ}^{6}\ar@{-}[r]\ar@{-}[u]              & {\circ}^{5\As}\ar@{-}[r]\ar@{-}[u]             & {\circ}^{4}
}
$$
Then by Proposition $\ref{proRepType}$, $\D$ is of wild type.
\end{proof}

\section{Yetter-Drinfeld modules category $\CYD$}\label{secCYD}
In this section, we determine the simple objects and projective covers of simple modules of $\CYD$ by using the monoidal category equivalence $\DM\simeq \CYD$, and also describe their braiding. Indeed, the braiding of $2$-dimensional simple modules are triangular \cite{U07}, and after a tedious calculation similar to \cite[Appendix]{GG16}, the braiding is not of diagonal type. To do so, we first need to describe the coaction of $\C$.

\begin{pro}
Let $\K_{\chi^i}=\K v$ be a one-dimensional $\D$-module with $ i\in Z_6$. Then $\K_{\chi^i}\in\CYD$ with the module structure and comodule structure given by
\begin{align*}
a\cdot v=\xi^i v,\quad b\cdot v=0,\quad \delta(v)=a^{3i}\otimes v.
\end{align*}
\end{pro}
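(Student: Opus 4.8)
The plan is to produce, for each $i \in Z_6$, an explicit Yetter-Drinfeld module structure on the one-dimensional space $\K v$ underlying $\K_{\chi^i}$, and to verify that it is the structure transported along the equivalence $\DM \simeq \CYD$. First I would recall how the equivalence works: a left $\D(\C^{cop})$-module $M$ becomes an object of ${}_{\C^{cop}}^{\C^{cop}}\mathcal{YD}$, hence of $\CYD$, where the $\C$-action comes from restricting along $\C \hookrightarrow \D$ (via $a,b$) and the $\C$-coaction is obtained from the $\C\As$-action (via the copy of $\A_1 \cong \C\As$ sitting inside $\D$, i.e. via $g, x$) by dualizing: $\delta(v) = \sum_r c_r \otimes (\phi^{-1}(\cdot)\text{-dual action})$, concretely $v_{(-1)}\langle \psi, v_{(0)}\rangle = \psi \cdot v$ for $\psi \in \C\As$. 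So the module part is immediate from Lemma~\ref{onesimple}: $a \cdot v = \chi^i(a) v = \xi^i v$ and $b \cdot v = \chi^i(b) v = 0$.

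Next I would compute the coaction. Since $\K_{\chi^i}$ is one-dimensional, $\delta(v) = h_i \otimes v$ for a unique group-like $h_i \in G(\C)$ (it must be group-like because $\delta$ is coassociative and counital on a $1$-dimensional comodule, forcing $\Delta(h_i) = h_i \otimes h_i$, $\epsilon(h_i)=1$; and the only group-likes of $\C$ are powers of $a^3$ by Remark after Proposition~\ref{proStrucOfC}). To pin down which power, I would use the action of the distinguished group-like $\widetilde g = 1\As + \xi^{-1}\Aa + \cdots + \xi^{-5}\Ae = \phi(g) \in \C\As$ on $v$: on the one hand $\widetilde g \cdot v$ corresponds to evaluating, via the YD-comodule compatibility, $\langle \widetilde g, h_i\rangle v$; on the other hand $g \cdot v = \chi^i(g) v = (-1)^i v$. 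Since $\langle \widetilde g, a^{3k}\rangle$: note $\widetilde g = \phi(g)$ and $\phi(g^j) = \alpha_j$ with $\langle \alpha_j, a^m\rangle = \xi^{-jm}$, so $\langle \widetilde g, a^{3k}\rangle = \xi^{-3k} = (-1)^{k}$. Matching $(-1)^k = (-1)^i$ gives $k \equiv i \pmod 2$, which alone is not enough, so I would also test against $\widetilde g$ raised to other powers, i.e. against all $\alpha_j = \phi(g^j)$: $g^j \cdot v = \chi^i(g^j) v = (-1)^{ij} v$ must equal $\langle \alpha_j, a^{3k}\rangle v = \xi^{-3jk} v = (-1)^{jk} v$ for all $j$, forcing $k \equiv i \pmod 2$ still — so I need a finer invariant. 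The cleanest route is to use the full $\C\As$-action dual to the coaction: for $c\As \in \C\As$, $c\As \cdot v = \langle c\As, (v_{(-1)}) \rangle v = \langle c\As, a^{3k}\rangle v$ where the $\C\As$-action on the $\D$-module is via $\phi^{-1}$, i.e. via the subalgebra $\A_1 = \langle g,x\rangle$. But $\langle \alpha_j, a^{3k}\rangle$ only sees $3k \bmod 6$, i.e. $k \bmod 2$; to resolve $k$ we must instead read off the $\C$-\emph{action} constraint on $a$: the YD compatibility $\delta(a\cdot v) = a_{(1)}(v_{(-1)})S(a_{(3)}) \otimes a_{(2)}v_{(0)}$ evaluated on $v$, using $\Delta^2(a) = a\otimes a \otimes a + (\text{terms with }b)$ and $b\cdot v = 0$, reduces to $\xi^i h_i \otimes v = a h_i S(a) \otimes \xi^i v = h_i \otimes \xi^i v$ (as $a$ is central-ish modulo $b$ and $h_i$ a power of $a^3$, commuting with $a$), which is automatically satisfied and gives nothing new. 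So the correct determination is: the coaction is the one transported from the $\C\As$-module structure, and on the $1$-dimensional module $\K_{\chi^i}$ this $\C\As$-module structure is the character $\chi^i|_{\C\As}$ precomposed with $\phi^{-1}$; dualizing a character $\C\As \to \K$ gives the group-like in $\C$ representing it, and tracking $\phi$ on Remark~\ref{rmkAtoCdual} shows $\chi^i \circ \phi$ sends $g \mapsto (-1)^i$, $x\mapsto 0$, which is the character $\beta$ of $\A_1$ with $\beta(g) = (-1)^i$; identifying $\A_1 \cong \C\As$ and then $(\C\As)\As \cong \C$, this character corresponds to $a^{3i}$ (since $\langle \alpha_j, a^{3i}\rangle = \xi^{-3ij} = (-1)^{ij}$ matches $\chi^i(g^j) = (-1)^{ij}$ for all $j$, and $a^{3i}$ is the unique group-like with this pairing). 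Hence $\delta(v) = a^{3i}\otimes v$.

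So the key steps, in order: (1) unwind the equivalence $\DM \simeq \CYD$ to get the $\C$-module structure directly from Lemma~\ref{onesimple} ($a\cdot v = \xi^i v$, $b \cdot v = 0$); (2) argue the coaction has the form $\delta(v) = h \otimes v$ with $h \in G(\C) = \langle a^3\rangle$; (3) determine $h = a^{3i}$ by matching the induced $\C\As$-action (equivalently the $\A_1$-action via $\phi$ from Lemma~\ref{lemAtoCdual} and Remark~\ref{rmkAtoCdual}) against the character $\chi^i$ on $g$ and $x$; (4) check the Yetter-Drinfeld compatibility condition $\delta(h'\cdot v) = h'_{(1)}v_{(-1)}S(h'_{(3)})\otimes h'_{(2)}v_{(0)}$ holds for $h' = a$ and $h' = b$, using $\Delta^2_\C(a), \Delta^2_\C(b)$ from the proof of Proposition~\ref{proStrucOfC}, the fact that $a^{3i}$ commutes with $a$ and that $b\cdot v = 0$ kills all the $b$-terms. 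The main obstacle is step (3): being careful that the dualization identifications $\A_1 \cong \C\As$ and $\C \cong (\C\As)\As$ are composed in the right order so that the exponent comes out as $3i$ and not, say, $3(i+c)$ for some shift $c$ — this is exactly the kind of place where a sign or a power of $\xi$ convention can slip, so I would double-check it by directly verifying, via the explicit $\D$-multiplication \eqref{equDrinfelddouble} and the comatrix coproducts, that $x \cdot v = 0$ and $g\cdot v = (-1)^i v$ are consistent with $\delta(v) = a^{3i}\otimes v$ under the standard YD-to-double dictionary. Step (4) is then routine bookkeeping.
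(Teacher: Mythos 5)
Your proposal is correct and follows essentially the same route as the paper: the action is the restriction of the character $\chi^i$ from Lemma~\ref{onesimple}, and the coaction $\delta(v)=h\otimes v$ with $h\in G(\C)$ group-like is pinned down by matching the $g$-action $\langle g,h\rangle=(-1)^i$. The only remark is that your detour in search of a ``finer invariant'' is unnecessary: since $G(\C)=\{1,a^3\}$ and $(a^3)^2=a^6=1$, the element $a^{3k}$ depends only on $k\bmod 2$, so the congruence $k\equiv i\pmod 2$ obtained at your first step already determines $h=a^{3i}$ uniquely.
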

\begin{proof}
Since $\K_{\chi^i}=\K v$ is a one dimensional $\D$-module with $ i\in Z_6$, the $\C$-action must be given by the restriction of the character of $\D$ given by lemma $\ref{onesimple}$ and the coaction must be of the form $\delta(v)=h\otimes v$ where $h\in G(\C)=\{1,a^3\}$ such that $\langle g, h\rangle v=(-1)^iv$. It follows that the action is given by $a\cdot v=\xi^i v,\, b\cdot v=0$ and the coaction is given by $\delta(v)=a^{3i}\otimes v$.
\end{proof}
The following Proposition gives the braiding of $\K_{\chi^i}$ for all $i\in Z_6$.
\begin{pro}\label{braidingone}
The braiding of the one-dimensional YD-module $\K_{\chi^i}=\K v$ is $c(v\otimes v)=(-1)^iv\otimes v$.
\end{pro}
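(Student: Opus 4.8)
The plan is to compute the braiding directly from its definition in the category $\CYD$, using the module and comodule structures already pinned down in the preceding proposition. Recall from \eqref{equbraidingYDcat} that for $V\in\CYD$ the braiding is $c_{V,V}(u\otimes w)=u_{(-1)}\cdot w\otimes u_{(0)}$. Since $\K_{\chi^i}=\K v$ is one-dimensional, everything is forced: we only need to evaluate $c$ on the single basis vector $v\otimes v$.

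First I would substitute the coaction $\delta(v)=a^{3i}\otimes v$ into the formula, obtaining $c(v\otimes v)=a^{3i}\cdot v\otimes v$. Then I would use the module structure $a\cdot v=\xi^i v$, which gives $a^{3i}\cdot v=(\xi^i)^{3i}v=\xi^{3i^2}v$. The final step is the arithmetic simplification $\xi^{3i^2}=(-1)^i$: since $\xi$ is a primitive $6$-th root of unity, $\xi^3=-1$, so $\xi^{3i^2}=(-1)^{i^2}$, and $i^2\equiv i\pmod 2$ for every integer $i$, hence $(-1)^{i^2}=(-1)^i$. Therefore $c(v\otimes v)=(-1)^i v\otimes v$, as claimed.

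There is essentially no obstacle here; the only mild subtlety is the exponent bookkeeping $\xi^{3i^2}=(-1)^i$, where one must be careful that $a^{3i}$ means the group-like element $a^{3i}$ acting (well-defined since $a^6=1$ makes $a^{3i}$ depend only on $i$ mod $2$), and that $3i\cdot i = 3i^2$ in the exponent of $\xi$ reduces mod $6$ to $3(i\bmod 2)$. I would present this as a two- or three-line computation, noting that the result shows $\K_{\chi^i}$ is a braided vector space of Cartan type with $\pm 1$ diagonal entry, which is exactly what is needed later to identify $\BN(\K_{\chi^i})$ with an exterior algebra when $i$ is odd (and with $\K$ when $i$ is even, hence of infinite-dimensional symmetric-algebra type — but that discussion belongs to the subsequent sections).
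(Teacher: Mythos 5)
Your computation is correct and is exactly the argument the paper intends (the paper states this proposition without proof, as an immediate consequence of the module structure $a\cdot v=\xi^i v$ and coaction $\delta(v)=a^{3i}\otimes v$ together with the braiding formula $c(v\otimes w)=v_{(-1)}\cdot w\otimes v_{(0)}$). The exponent bookkeeping $\xi^{3i^2}=(-1)^{i^2}=(-1)^i$ is handled correctly.
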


\begin{pro}
Let $V_{i,j}=\K\{v_1,v_2\}$ be a two-dimensional simple $\D$-module with $(i,j)\in\Lambda$. If we denote $\Lam_1=\xi^i$ and $\Lam_2=\xi^j$, then $V_{i,j}\in\CYD$ with the module structure  given by
\begin{align*}
a\cdot v_1=\Lam_1v_1,\quad b\cdot v_1=0,\quad a\cdot v_2=\xi\Lam_1 v_2,\quad b\cdot v_2=v_1,
\end{align*}
and the comodule structure given by

\medskip\noindent
for $j=0$, $i\in\{1,3,5\}:$
\begin{align*}
\delta(v_1)&=1\otimes v_1+2\xi\Lam_1ba^5\otimes v_2,\quad\delta(v_2)=a^3\otimes v_2,\\
\intertext{for $j=1$, $i\in Z_6:$}
\delta(v_1)&=a^5\otimes v_1+(\xi^4\Lam_1^4-\xi^5\Lam_1)ba^4\otimes v_2,\,
\delta(v_2) =a^2\otimes v_2+(\Lam_1^2+\xi\Lam_1^{-1})ba\otimes v_1,\\
\intertext{for $j=2$, $i\in Z_6:$}
\delta(v_1)&=a^4\otimes v_1+(\xi^4\Lam_1^4-\Lam_1)ba^3\otimes v_2,\,
\delta(v_2)=a\otimes v_2+(\Lam_1^2+\xi^2\Lam_1^{-1})b\otimes v_1,\\
\intertext{for $j=3$, $i\in\{0,2,4\}:$}
\delta(v_1)&=a^3\otimes v_1+(\xi^4\Lam_1^4-\xi\Lam_1)ba^2\otimes v_2, \quad\delta(v_2)=1\otimes v_2,\\
\intertext{for $j=4$, $i\in Z_6:$}
\delta(v_1)&=a^2\otimes v_1+(\xi^4\Lam_1^4-\xi^2\Lam_1)ba\otimes v_2,\,
\delta(v_2)=a^5\otimes v_2+(\Lam_1^2+\xi^{4}\Lam_1^{-1})ba^4\otimes v_1,\\
\intertext{for $j=5$, $i\in Z_6:$}
\delta(v_1)&=a\otimes v_1+(\xi^4\Lam_1^4-\xi^3\Lam_1)b\otimes v_2 ,\,
\delta(v_2)=a^4\otimes v_2+(\Lam_1^2+\xi^{5}\Lam_1^{-1})ba^3\otimes v_1.
\end{align*}
\end{pro}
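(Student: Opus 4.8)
The plan is to obtain both structures of $V_{i,j}$ as a Yetter--Drinfeld module from the equivalence $\DM\simeq\CYD$ applied to the $\D$-module of Lemma~\ref{twosimple}. The $\C$-module structure is the easy part: under the equivalence it is simply the restriction of the $\D$-action along the algebra inclusion $\C\hookrightarrow\D$, $a\mapsto a$, $b\mapsto b$, so the formulas $a\cdot v_1=\Lam_1v_1$, $b\cdot v_1=0$, $a\cdot v_2=\xi\Lam_1v_2$, $b\cdot v_2=v_1$ are read off immediately from the matrices $[a]_{i,j}$ and $[b]_{i,j}$. Hence the whole content of the statement is the computation of the left $\C$-coaction $\delta\colon V_{i,j}\to\C\otimes V_{i,j}$.

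The idea for $\delta$ is that, under the equivalence, the coaction is just a reformulation of the action of the subalgebra of $\D$ generated by $g$ and $x$. Via the Hopf algebra isomorphism $\phi\colon\A_1\to\C\As$ of Lemma~\ref{lemAtoCdual} (whose values on a basis are recorded in Remark~\ref{rmkAtoCdual}), this subalgebra is identified with $\C\As$, and a left $\C\As$-action corresponds to a left $\C$-coaction in the same convention already used for the one-dimensional modules, namely $f\cdot v=\langle f,v_{(-1)}\rangle v_{(0)}$. So I would write $\delta(v_k)=\sum_{l}c_{kl}\otimes v_l$ with unknowns $c_{kl}\in\C$ expanded in the basis $\{a^m,ba^m\}_{0\le m<6}$ of Proposition~\ref{proStrucOfC}; then each scalar $\langle\phi(g^m),c_{kl}\rangle$ and $\langle\phi(g^m x),c_{kl}\rangle$, $0\le m<6$, is prescribed by the matrices $[g]_{i,j}$ and $[x]_{i,j}$. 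Since $\{\phi(g^m),\phi(g^m x)\}_{0\le m<6}$ is a basis of $\C\As$, this is a nonsingular linear system in the coefficients of the $c_{kl}$ and determines them uniquely.

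It is convenient to solve the system in two stages. The $g$-action is diagonal with eigenvalues $\Lam_2=\xi^j$ on $v_1$ and $-\Lam_2=\xi^{j+3}$ on $v_2$, and it is tested against a group-like functional; this forces the group-like parts of $c_{11}$ and $c_{22}$ to be $a^{-j}$ and $a^{3-j}$ and kills the group-like parts of $c_{12}$ and $c_{21}$. The $x$-action, whose matrix is strictly triangular, then pins down the remaining coefficients, which lie in $\K\{b,ba,\dots,ba^5\}$; here one uses $x^2=1-g^2$ together with the comultiplication of $\C$ from Proposition~\ref{proStrucOfC}. Running this case by case in $j\in Z_6$ produces the six displayed families. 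It is worth noting in advance that the two ``mixing'' coefficients that appear are, up to fixed units, $\Lam_1^{-1}(\Lam_1^3+\Lam_2)$ and $\Lam_1^{-1}(\Lam_1^3-\Lam_2)$ --- the same expressions as the off-diagonal entries of $[x]_{i,j}$, which is not surprising since the coaction is dual to the $x$-action --- and that $\Lam_1^{-1}(\Lam_1^3+\Lam_2)$ vanishes exactly when $\Lam_1^3=-\Lam_2$, which under the simplicity constraint $\Lam_1^3\neq\Lam_2$ of Lemma~\ref{twosimple} happens precisely for $j\in\{0,3\}$; this explains why $\delta(v_2)$ has no $\otimes v_1$ summand in those two cases.

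Finally, as a consistency check I would verify directly that the resulting $\delta$ is coassociative and counital and that the Yetter--Drinfeld compatibility $\delta(h\cdot v)=h_{(1)}v_{(-1)}S(h_{(3)})\otimes h_{(2)}v_{(0)}$ holds for $h\in\{a,b\}$. The main obstacle is not conceptual but bookkeeping: one must carry the various opposite/co-opposite conventions (recall $\D=\D(\C^{cop})$, and $g,x$ generate a copy of $\A_1^{bop}$ rather than $\A_1$) correctly through the chain ``$\A_1$-module $\leadsto$ $\C\As$-module $\leadsto$ $\C$-comodule'', and then solve six separate $2\times2$ systems over $\C$ without sign or root-of-unity slips; the computation is of the same nature as the one in \cite[Appendix]{GG16}.
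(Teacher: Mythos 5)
Your proposal is correct and follows essentially the same route as the paper: the paper also obtains $\delta$ by dualizing the action of the $g,x$-part of $\D$ through the isomorphism $\phi\colon\A_1\to\C\As$ of Lemma~\ref{lemAtoCdual}, writing $\delta(v)=\sum_i c_i\otimes c^i\cdot v$ for dual bases (which is exactly the closed-form solution of the linear system you set up) and then summing the resulting geometric series case by case in $j$. The only quibble is cosmetic: in your aside the second mixing coefficient is, up to a scalar, $\Lam_1(\Lam_1^3-\Lam_2)$ rather than $\Lam_1^{-1}(\Lam_1^3-\Lam_2)$, but this does not affect the argument, and your observation that $\Lam_1^3+\Lam_2=0$ exactly for $j\in\{0,3\}$ correctly explains the two degenerate cases.
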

\begin{proof}
Note that by Lemma $\ref{lemAtoCdual}$ and Remark $\ref{rmkAtoCdual}$, we have
\begin{align*}
(g^i)\As=\frac{1}{6}(1+\xi^ia+\xi^{2i}a^2+\xi^{3i}a^3+\xi^{4i}a^4+\xi^{5i}a^5),\\
(g^ix)\As=\frac{1}{6\theta}(\xi^ib+\xi^{2i}ba+\xi^{3i}ba^2+\xi^{4i}ba^3+\xi^{5i}ba^4+ba^5).
\end{align*}
Denote by $\{c_i\}_{1\leq i\leq 12}$ and $\{c^i\}_{1\leq i\leq 12}$ a basis of $\C$ and its dual basis respectively. Then the comodule structure is given by $\delta(v)=\sum_{i=1}^{12}c_i\otimes c^i\cdot v$ for any $v\in V_{i,j}$. Thus
\begin{align*}
\delta(v_1)&=\sum_{k=0}^{5}(g^k)\As\otimes g^k\cdot v_1+\sum_{k=0}^{5}(g^kx)\As\otimes g^kx\cdot v_1\\
           &=\sum_{k=0}^{5}\Lam_2^k(g^k)\As\otimes v_1+\sum_{k=0}^{5}(\Lam_2)^k(g^kx)\As\otimes  x_2v_2,\\
\delta(v_2)&=\sum_{k=0}^{5}(g^k)\As\otimes g^k\cdot v_2+\sum_{k=0}^{5}(g^kx)\As\otimes g^kx\cdot v_2\\
           &=\sum_{k=0}^{5}(-\Lam_2)^k(g^k)\As\otimes v_2+\sum_{k=0}^{5}(-\Lam_2)^k(g^kx)\As\otimes  x_1v_1,
\end{align*}
where $x_1=\theta^{-1}\xi^{2}\Lam_1^{-1}(\Lam_1^3+\Lam_2)$ and $x_2=\theta\xi^{-2}\Lam_1(\Lam_1^3-\Lam_2)$. Now we describe the coaction explicitly case by case.

If $j=0$, then $i\in\{1,3,5\}$. In such a case, $\Lam_2=1$, $x_1=0$, $x_2=2\theta\xi\Lam_1=2\theta\xi^{i+1}$ and then $\delta(v_1)=1\otimes v_1+2\xi\Lam_1ba^5\otimes v_2$ and $\delta(v_2)=a^3\otimes v_2$.

If $j=1$, then $i\in Z_6$. In such a case, $\Lam_2=\xi$, $x_1=\theta^{-1}\xi^{2}(\Lam_1^2+\xi\Lam_1^{-1})$, $x_2=\theta(\xi^4\Lam_1^4-\xi^5\Lam_1)$ and then
$\delta(v_1)=a^5\otimes v_1+(\xi^4\Lam_1^4-\xi^5\Lam_1)ba^4\otimes v_2$
and
$\delta(v_2) =a^2\otimes v_2+(\Lam_1^2+\xi\Lam_1^{-1})ba\otimes v_1$.

If $j=2$, then $i\in Z_6$. In such a case, $\Lam_2=\xi^2$, $x_1=\theta^{-1}\xi^{2}(\Lam_1^2+\xi^2\Lam_1^{-1})$, $x_2=\theta(\xi^4\Lam_1^4-\Lam_1)$ and then
$\delta(v_1)=a^4\otimes v_1+(\xi^4\Lam_1^4-\Lam_1)ba^3\otimes v_2,\,
\delta(v_2)=a\otimes v_2+(\Lam_1^2+\xi^2\Lam_1^{-1})b\otimes v_1$.

If $j=3$, then $i\in \{0,2,4\}$. In such a case, $\Lam_2=\xi^3$, $x_1=0$, $x_2=\theta\xi^{-2}(\Lam_1^4+\Lam_1)$ and then
$\delta(v_1)=a^3\otimes v_1+\xi^{-2}(\Lam_1^4+\Lam_1)ba^2\otimes v_2$
and
$\delta(v_2)=1\otimes v_2$.

If $j=4$, then $i\in Z_6$. In such a case, $\Lam_2=\xi^4$, $x_1=\theta^{-1}\xi^{2}(\Lam_1^2+\xi^{4}\Lam_1^{-1})$, $x_2=\theta(\xi^4\Lam_1^4-\xi^2\Lam_1)$ and then
$\delta(v_1)=a^2\otimes v_1+(\xi^4\Lam_1^4-\xi^2\Lam_1)ba\otimes v_2,\,
\delta(v_2)=a^5\otimes v_2+(\Lam_1^2+\xi^{4}\Lam_1^{-1})ba^4\otimes v_1$ .

If $j=5$, then $i\in Z_6$. In such a case, $\Lam_2=\xi^5$, $x_1=\theta^{-1}\xi^{2}(\Lam_1^2+\xi^{5}\Lam_1^{-1})$, $x_2=\theta(\xi^4\Lam_1^4-\xi^3\Lam_1)$ and then
$\delta(v_1)=a\otimes v_1+(\xi^4\Lam_1^4-\xi^3\Lam_1)b\otimes v_2 ,\,
\delta(v_2)=a^4\otimes v_2+(\Lam_1^2+\xi^{5}\Lam_1^{-1})ba^3\otimes v_1$ .
\end{proof}

Now using the braiding in $\CYD$ (see equation $\eqref{equbraidingYDcat}$),  we describe the braiding of the simple modules $V_{i,j}\in\CYD$.
\begin{pro}\label{probraidsimpletwo}
Let $V_{i,j}=\K\{v_1,v_2\}$ be a two-dimensional simple $D$-module with $(i,j)\in\Lambda$. If we denote $\Lam_1=\xi^i$ and $\Lam_2=\xi^j$, then $V_{i,j}\in\CYD$. The braiding of $V_{i,j}=\K\{v_1,v_2\}$ is given by
\begin{enumerate}
  \item If $j=0$, $i\in\{1,3,5\}$,
  \begin{align*}
   c(\left[\begin{array}{ccc} v_1\\v_2\end{array}\right]\otimes\left[\begin{array}{ccc} v_1~v_2\end{array}\right])=
   \left[\begin{array}{ccc}
                     v_1\otimes v_1    & v_2\otimes v_1+2v_1\otimes v_2\\
                     -v_1\otimes v_2   & v_2\otimes v_2
         \end{array}\right].
  \end{align*}
  \item If $j=1$, $i\in Z_6$,
  \begin{align*}
   c(\left[\begin{array}{ccc} v_1\\v_2\end{array}\right]\otimes\left[\begin{array}{ccc} v_1~v_2\end{array}\right])=
   \left[\begin{array}{ccc}
       \Lam_1^5v_1\otimes v_1    & \xi^5\Lam_1^5v_2\otimes v_1{+}(\Lam_1^5{+}\Lam_1^2\xi^2)v_1\otimes v_2\\
       \Lam_1^2v_1\otimes v_2            & \xi^2\Lam_1^2v_2\otimes v_2{+}(\xi\Lam_1^3{+}\xi^2)v_1\otimes v_1
         \end{array}\right].
  \end{align*}
  \item If $j=2$, $i\in Z_6$,
  \begin{align*}
   c(\left[\begin{array}{ccc} v_1\\v_2\end{array}\right]\otimes\left[\begin{array}{ccc} v_1~v_2\end{array}\right])=
   \left[\begin{array}{ccc}
       \Lam_1^4v_1\otimes v_1    & \xi^4\Lam_1^4v_2\otimes v_1{+}(\xi\Lam_1{+}\Lam_1^4)v_1\otimes v_2\\
       \Lam_1v_1\otimes v_2            & \xi\Lam_1v_2\otimes v_2{+}(\Lam_1^2{+}\xi^2\Lam_1^5)v_1\otimes v_1
         \end{array}\right].
  \end{align*}
  \item If $j=3$, $i\in\{0,2,4\}$,
  \begin{align*}
   c(\left[\begin{array}{ccc} v_1\\v_2\end{array}\right]\otimes\left[\begin{array}{ccc} v_1~v_2\end{array}\right])=
   \left[\begin{array}{ccc}
                     v_1\otimes v_1    & -v_2\otimes v_1+2v_1\otimes v_2\\
                     v_1\otimes v_2   & v_2\otimes v_2
         \end{array}\right].
  \end{align*}
  \item If $j=4$, $i\in Z_6$,
  \begin{align*}
   c(\left[\begin{array}{ccc} v_1\\v_2\end{array}\right]\otimes\left[\begin{array}{ccc} v_1~v_2\end{array}\right])=
   \left[\begin{array}{ccc}
       \Lam_1^2v_1\otimes v_1    & \xi^2\Lam_1^2v_2\otimes v_1{+}(\xi^5\Lam_1^5{+}\Lam_1^2)v_1\otimes v_2\\
       \Lam_1^5v_1\otimes v_2            & \xi^5\Lam_1^5v_2\otimes v_2{+}(\xi^2\Lam_1^3{+}\xi^4)v_1\otimes v_1
         \end{array}\right].
   \end{align*}
  \item If $j=5$, $i\in Z_6$,
  \begin{align*}
   c(\left[\begin{array}{ccc} v_1\\v_2\end{array}\right]\otimes\left[\begin{array}{ccc} v_1~v_2\end{array}\right])=
   \left[\begin{array}{ccc}
       \Lam_1v_1\otimes v_1    & \xi\Lam_1v_2\otimes v_1{+}(\xi^4\Lam_1^4{+}\Lam_1)v_1\otimes v_2\\
       \Lam_1^4v_1\otimes v_2  & \xi^4\Lam_1^4v_2\otimes v_2{+}(\xi^3\Lam_1^5{+}\xi^2\Lam_1^{2}) v_1\otimes v_1
       \end{array}\right].
   \end{align*}
\end{enumerate}
\end{pro}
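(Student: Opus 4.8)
The plan is to apply the braiding formula for $\CYD$ recorded in \eqref{equbraidingYDcat}, namely $c(v\otimes w)=v_{(-1)}\cdot w\otimes v_{(0)}$, using the Yetter--Drinfeld structure on $V_{i,j}=\K\{v_1,v_2\}$ determined in the preceding proposition: the $\C$-action $a\cdot v_1=\Lam_1 v_1$, $a\cdot v_2=\xi\Lam_1 v_2$, $b\cdot v_1=0$, $b\cdot v_2=v_1$, together with the coaction $\delta$, which is given there case-by-case in $j\in Z_6$. Since the braiding has the form $c(v_k\otimes v_l)=(v_k)_{(-1)}\cdot v_l\otimes (v_k)_{(0)}$, everything reduces to evaluating the left legs of $\delta(v_1)$ and $\delta(v_2)$ on $v_1$ and $v_2$.

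First I would record the only auxiliary action formulas that enter: because $a$ acts diagonally, $a^m\cdot v_1=\Lam_1^m v_1$ and $a^m\cdot v_2=\xi^m\Lam_1^m v_2$; and since $b\cdot v_1=0$ one gets $ba^m\cdot v_1=0$ and $ba^m\cdot v_2=\xi^m\Lam_1^m v_1$. These account for every element of $\C$ appearing in the coactions. Then, for each fixed $j$, I would substitute $\delta(v_1)$ and $\delta(v_2)$ into the braiding formula and read off the four entries $c(v_k\otimes v_l)$, collecting terms and reducing exponents via $\xi^6=1$, $\xi^3=-1$, $\Lam_1^6=1$ (and, in the cases $j=0$ and $j=3$ where $i$ is constrained, using $\Lam_1^3=\xi^{3i}=-1$). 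For instance, when $j=1$ the coaction $\delta(v_1)=a^5\otimes v_1+(\xi^4\Lam_1^4-\xi^5\Lam_1)ba^4\otimes v_2$ gives $c(v_1\otimes v_1)=a^5\cdot v_1\otimes v_1=\Lam_1^5 v_1\otimes v_1$ and $c(v_1\otimes v_2)=\xi^5\Lam_1^5 v_2\otimes v_1+(\xi^4\Lam_1^4-\xi^5\Lam_1)\xi^4\Lam_1^4 v_1\otimes v_2$, whose second coefficient simplifies to $\xi^2\Lam_1^2+\Lam_1^5$; likewise $\delta(v_2)=a^2\otimes v_2+(\Lam_1^2+\xi\Lam_1^{-1})ba\otimes v_1$ gives $c(v_2\otimes v_1)=\Lam_1^2 v_1\otimes v_2$ and $c(v_2\otimes v_2)=\xi^2\Lam_1^2 v_2\otimes v_2+(\xi\Lam_1^3+\xi^2)v_1\otimes v_1$, matching the stated matrix. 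The remaining five values of $j$ are handled in exactly the same way.

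The argument is thus entirely a direct computation; there is no conceptual obstacle. The only point requiring care --- the ``hard part,'' such as it is --- is the bookkeeping across the six cases and the systematic use of $\xi^3=-1$, which is precisely what collapses several a priori different monomials in $\xi$ and $\Lam_1$ into the normalized coefficients displayed in the statement, and which also shows that the braiding is triangular (lower-triangular with respect to the ordered basis $\{v_1\otimes v_1,\,v_1\otimes v_2,\,v_2\otimes v_1,\,v_2\otimes v_2\}$ in the appropriate sense) but not of diagonal type.
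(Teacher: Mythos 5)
Your proposal is correct and is exactly the computation the paper intends: the proposition is stated without a written proof, and the evident argument is to plug the action and coaction from the preceding proposition into the braiding formula $c(v\otimes w)=v_{(-1)}\cdot w\otimes v_{(0)}$ of \eqref{equbraidingYDcat}, using $ba^m\cdot v_1=0$, $ba^m\cdot v_2=\xi^m\Lam_1^m v_1$ and reducing with $\xi^3=-1$ (and $\Lam_1^3=-1$ when $j=0$, $\Lam_1^3=1$ when $j=3$). Your sample verification for $j=1$ reproduces the stated matrix, and the other five cases go through identically.
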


 Similarly, we give the description of the projective covers of the one dimensional modules $\K_{\chi^j}$ for $k\in Z_6$ as objects in $\CYD$.
\begin{pro}
Let $\Pp_j$ be the projective cover of the one-dimensional $\D$-module $\K_{\chi^j}$ for $j\in Z_6$. Then $\Pp_j\in\CYD $ with its module structure given by $\eqref{eqprojective0}$ and $\eqref{eqprojectivei}$ and its comodule structure given by
\begin{align*}
\delta(p_{1,j})&=(a^3)^j\otimes p_{1,j}+\theta^{-1}(-1)^jba^5(a^3)^j\otimes (\theta p_{2,j}+2p_{3,j}),\\
\delta(p_{2,j})&=(a^3)^{j+1}\otimes p_{2,j}+2(1+\xi)(-1)^jba^2(a^3)^j\otimes p_{4,j},\\
\delta(p_{3,j})&=(a^3)^{j+1}\otimes p_{3,j}+\xi^2\theta^{-1}(-1)^jba^2(a^3)^j\otimes p_{4,j},\\
\delta(p_{4,j})&=(a^3)^j\otimes p_{4,j}.
\end{align*}
\end{pro}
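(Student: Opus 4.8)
The plan is to extract everything from the monoidal equivalence $\DM\simeq\CYD$. Since the left $\D$-module structure on $\Pp_j$ is already known (the action of $a,b,g,x$ on the basis $\{p_{i,j}\}_{1\le i\le 4}$ being given by \eqref{eqprojective0} for $j=0$ and by \eqref{eqprojectivei} in general), $\Pp_j$ is automatically an object of $\CYD$: the $\C$-module structure is the restriction of the $\D$-action along $\C\hookrightarrow\D$, and the $\C$-comodule structure is the one dual to the action of the other tensor factor $\C\As\cong\A_1$ of $\D$. Concretely, for $v\in\Pp_j$ one has $\delta(v)=\sum_i c_i\otimes c^i\cdot v$ where $\{c_i\}$ runs over a basis of $\C$ and $\{c^i\}$ over the dual basis inside $\C\As$; using the isomorphism $\phi\colon\A_1\xrightarrow{\ \sim\ }\C\As$ of Lemma \ref{lemAtoCdual} together with the inverse dual-basis expressions recorded in Remark \ref{rmkAtoCdual} (the same ones already used to compute the coaction on the simple modules $V_{i,j}$), this rewrites as
\[
\delta(v)=\sum_{k=0}^{5}(g^{k})\As\otimes\bigl(g^{k}\cdot v\bigr)+\sum_{k=0}^{5}(g^{k}x)\As\otimes\bigl(g^{k}x\cdot v\bigr),
\]
where $(g^{k})\As=\tfrac16\sum_{l=0}^{5}\xi^{kl}a^{l}$ and $(g^{k}x)\As=\tfrac1{6\theta}\sum_{l=0}^{5}\xi^{k(l+1)}ba^{l}$.

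First I would settle the case $j=0$. From the matrices in \eqref{eqprojective0} one reads off the action of $g^{k}$ and of $g^{k}x\cdot p_i=g^{k}\cdot(x\cdot p_i)$ for every $k$: up to the scalars $(-1)^{k}$, the only nonzero terms are $g^{k}\cdot p_1=p_1$, $g^{k}\cdot p_4=p_4$, $g^{k}\cdot p_2=(-1)^{k}p_2$, $g^{k}\cdot p_3=(-1)^{k}p_3$, $g^{k}x\cdot p_1=(-1)^{k}(\theta p_2+2p_3)$, $g^{k}x\cdot p_2=2(1+\xi)\theta p_4$ and $g^{k}x\cdot p_3=\xi^{2}p_4$. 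Substituting into the displayed formula and collapsing each of the resulting sums with the orthogonality relation $\sum_{k=0}^{5}\xi^{km}=6\,\delta_{6\mid m}$ (together with $(-1)^{k}=\xi^{3k}$ and $\theta^{2}=\xi-1$) leaves a single monomial $a^{l}$ or $ba^{l}$ in each first tensor slot, and one reads off the asserted $\delta(p_{i,0})$.

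For general $j$ I would run exactly the same computation using \eqref{eqprojectivei}; the extra factor $(-1)^{j}$ in the $g$- and $x$-actions and the group-like $(a^3)^j$ simply shift the indices of the character sums, producing the stated formulas. Alternatively one may reduce to $j=0$ via $\Pp_j\cong\Pp\otimes\K_{\chi^{j}}$ (Lemma \ref{lemProjectwodimsimple}): the coaction on a tensor product in $\CYD$ is diagonal, $\delta$ on $\K_{\chi^{j}}$ is $v\mapsto(a^{3})^{j}\otimes v$, and $a^{3}$ is group-like, so each first-slot monomial of the $j=0$ answer gets multiplied on the right by $(a^{3})^{j}$, with the sign $(-1)^{j}$ coming from the $g$-action on the $\K_{\chi^{j}}$-factor; this is a useful consistency check. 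Finally, that $\Pp_j$ genuinely lies in $\CYD$, i.e.\ that the Yetter--Drinfeld compatibility $\delta(h\cdot v)=h_{(1)}v_{(-1)}S(h_{(3)})\otimes h_{(2)}v_{(0)}$ holds, is automatic from the equivalence, but it is worth verifying directly on the generators $h\in\{a,b,g,x\}$.

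The main obstacle is not conceptual but bookkeeping. One must pin down, once and for all, the precise form of the comodule structure along the chain $\DM\simeq{}_{\C^{cop}}\mathcal{YD}^{\C^{cop}}\simeq\CYD$ --- which antipode, on which side, and how the ``$\mathrm{op}$/$\mathrm{cop}$'' twists enter --- because this is exactly what decides whether a monomial lands as $a^{l}$ or as $S(a^{l})=a^{-l}$ in the first tensor slot, hence whether one sees $ba^{5}$ rather than $ba^{2}$ and where the signs $(-1)^{j}$ sit. Once the convention is fixed, the remaining work is just the handful of order-six character sums above, carried out without arithmetic slips.
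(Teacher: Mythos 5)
Your strategy is exactly the one the paper (implicitly) relies on: the proposition is stated without proof because it is the same computation as for the coaction on the simple modules $V_{i,j}$, namely $\delta(v)=\sum_{k}(g^{k})\As\otimes g^{k}\cdot v+\sum_{k}(g^{k}x)\As\otimes g^{k}x\cdot v$ with the dual-basis elements $(g^{k})\As,(g^{k}x)\As\in\C$ expanded in the monomial basis and the sums collapsed by orthogonality of characters; your reduction to $j=0$ via $\Pp_j\cong\Pp\otimes\K_{\chi^{j}}$ is a sensible supplementary check.

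There is, however, one concrete slip in your worked case $j=0$, located precisely at the point you yourself flag as delicate. Because the copy of $\A_1\cong\C\As$ inside $\D=\D(\C^{cop})$ carries the opposite multiplication (it is $\A_1^{bop}$), the basis element denoted $g^{k}x$ acts on a left $\D$-module as $x\circ g^{k}$, \emph{not} as $g^{k}\circ x$. This convention is already forced by the paper's computation for $V_{i,j}$, where $g^{k}x\cdot v_1=\Lam_2^{k}x_2v_2=x\cdot(g^{k}\cdot v_1)$ rather than $(-\Lam_2)^{k}x_2v_2$. With your reading one gets $g^{k}x\cdot p_1=(-1)^{k}(\theta p_2+2p_3)$, hence $\sum_k(-1)^k(g^kx)\As=\theta^{-1}ba^{2}$, landing the second term of $\delta(p_{1,0})$ on $ba^{2}$ instead of the stated $ba^{5}$ (and symmetrically $ba^{5}$ instead of $ba^{2}$ for $p_{2,0}$ and $p_{3,0}$). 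With the correct order, $g^{k}x\cdot p_1=x\cdot(g^{k}\cdot p_1)=\theta p_2+2p_3$ with no sign, $\sum_k(g^kx)\As=\theta^{-1}ba^{5}$, and all four formulas come out as stated; for general $j$ the extra factor $(-1)^{jk}$ from $g^{k}\cdot p_{i,j}$ shifts the surviving exponent by $-3j\equiv 3j\pmod 6$, which is exactly the group-like factor $(a^3)^j$, while the residual $(-1)^{j}$ comes from $x\cdot p_{i,j}=(-1)^{j}(x\cdot p_i)\otimes 1$ in \eqref{eqprojectivei}. So the argument is correct once this single convention is pinned down the right way round.
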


Finally, we describe the braiding of $\Pp_j\in\CYD$.
\begin{pro}\label{probraidingprocover}
The braiding of $\Pp_j$ is given by
\begin{align*}
   c(p_{1,j}\otimes\left[\begin{array}{ccc} p_{1,j}\\p_{2,j}\\p_{3,j}\\p_{4,j}\end{array}\right])&=
   \left[\begin{array}{ccc} (-1)^jp_{1,j} \\p_{2,j}\\p_{3,j}\\(-1)^jp_{4,j}\end{array}\right]\otimes p_{1,j}+
   \left[\begin{array}{ccc}-p_{3,j} \\ (-1)^j\theta^{-1}\xi^{5}p_{4,j}\\0\\0\end{array}\right]\otimes (\theta p_{2,j}{+}2p_{3,j}),\\
   c(p_{2,j}\otimes\left[\begin{array}{ccc} p_{1,j}\\p_{2,j}\\p_{3,j}\\p_{4,j}\end{array}\right])&=
   \left[\begin{array}{ccc}p_{1,j} \\(-1)^{j+1}p_{2,j}\\(-1)^{j+1}p_{3,j}\\p_{4,j}\end{array}\right]\otimes p_{2,j}+
   \left[\begin{array}{ccc}2(-1)^{j+1}(1+\xi)\theta p_{3,j}\\2(1+\xi^5)p_{4,j}\\0\\0\end{array}\right]\otimes p_{4,j},\\
   c(p_{3,j}\otimes\left[\begin{array}{ccc} p_{1,j}\\p_{2,j}\\p_{3,j}\\p_{4,j}\end{array}\right])&=
   \left[\begin{array}{ccc}p_{1,j} \\(-1)^{j+1}p_{2,j}\\(-1)^{j+1}p_{3,j}\\p_{4,j}\end{array}\right]\otimes p_{3,j}+
   \left[\begin{array}{ccc}(-1)^{j+1}\xi^2 p_{3,j}\\\theta^{-1}\xi p_{4,j}\\0\\0\end{array}\right]\otimes p_{4,j},\\
   c(p_{4,j}\otimes\left[\begin{array}{ccc} p_{1,j}\\p_{2,j}\\p_{3,j}\\p_{4,j}\end{array}\right])&=
   \left[\begin{array}{ccc}(-1)^jp_{1,j} \\p_{2,j}\\p_{3,j}\\(-1)^jp_{4,j}\end{array}\right]\otimes p_{4,j}.
\end{align*}
\end{pro}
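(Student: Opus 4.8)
The assertion is purely computational: it follows by combining the comodule structure of $\Pp_j$ established above, the module structure given by \eqref{eqprojective0} and \eqref{eqprojectivei}, and the formula \eqref{equbraidingYDcat} for the braiding of $\CYD$, namely $c(v\otimes w)=v_{(-1)}\cdot w\otimes v_{(0)}$. The plan is to evaluate $c(p_{k,j}\otimes p_{l,j})=(p_{k,j})_{(-1)}\cdot p_{l,j}\otimes(p_{k,j})_{(0)}$ for all $1\le k,l\le 4$ and to arrange the $16$ outputs into the four displayed matrix identities.

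Concretely, I would first expand each $\delta(p_{k,j})$ using the comodule description: $\delta(p_{k,j})$ is a sum of at most two terms of the form $h\otimes p$ with $h\in\{(a^3)^j,\,ba^5(a^3)^j,\,ba^2(a^3)^j\}$, so the whole calculation reduces to determining how these three elements of $\C$ act on the basis $\{p_{1,j},p_{2,j},p_{3,j},p_{4,j}\}$. For this I would read off the $4\times4$ matrices of $a^3$, $ba^5$ and $ba^2$ on $\Pp=\Pp_0$ directly from \eqref{eqprojective0} (they are immediate products of the displayed matrices), and then use \eqref{eqprojectivei} to transport them to $\Pp_j$: each factor $a$ contributes a scalar $\xi^j$ and each factor $g$ a scalar $(-1)^j$, so, for instance, $(a^3)^j=a^{3j}$ acts on $\Pp_j$ by $\xi^{3j^2}=(-1)^{j}$ times the action of $a^{3j}$ on $\Pp_0$ (using $\xi^3=-1$ and $j^2\equiv j\bmod 2$), and $ba^5(a^3)^j$ and $ba^2(a^3)^j$ are handled the same way. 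Plugging the resulting vectors back into $c(p_{k,j}\otimes p_{l,j})$ and simplifying the powers of $\xi$ (with $\xi$ a primitive $6$-th root of unity, so $\xi^6=1$ and $\xi^5=\xi^{-1}$) yields the claimed formulas.

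There is no conceptual obstacle; the only thing to watch is the bookkeeping of the scalars $\xi^j$ and $(-1)^j$ coming from the identification $\Pp_j\cong\Pp\otimes\K_{\chi^j}$ in \eqref{eqprojectivei}, together with the repeated reduction of powers of $\xi$. A convenient internal check is the last row: since $\delta(p_{4,j})=(a^3)^j\otimes p_{4,j}$, the line $\K p_{4,j}$ is a Yetter--Drinfeld submodule isomorphic to $\K_{\chi^j}$, and the computation gives $c(p_{4,j}\otimes p_{4,j})=(-1)^j p_{4,j}\otimes p_{4,j}$, in agreement with Proposition \ref{braidingone}; likewise the leading term of each row, coming only from the grouplike-type part $(a^3)^j$, reproduces the $\pm1$ signs already forced by the action of $a^3$, which catches sign errors quickly.
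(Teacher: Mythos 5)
Your approach is exactly the one the paper intends: the proposition is stated without proof, being a direct evaluation of $c(p_{k,j}\otimes p_{l,j})=(p_{k,j})_{(-1)}\cdot p_{l,j}\otimes (p_{k,j})_{(0)}$ from the coaction formulas of the preceding proposition together with the action matrices of $\eqref{eqprojective0}$ and $\eqref{eqprojectivei}$, which is precisely your plan. The only small correction to your bookkeeping is that under the identification $\Pp_j\cong\Pp\otimes\K_{\chi^j}$ in $\eqref{eqprojectivei}$ it is the generator $b$ of $\C$ (not $g$, which does not act through the coaction) that also picks up the scalar $\xi^j$, and this extra $\xi^j$ is exactly what reduces the coefficients coming from $ba^{5}(a^3)^{j}$ and $ba^{2}(a^3)^{j}$ to the stated $(-1)^j$-type scalars.
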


\section{Nichols algebras in $\CYD$}\label{secNicholsalg}
In this section, we show that the Nichols algebras $\BN(\K_{\chi^{k}})$ for $k\in\{1,3,5\}$ are finite-dimensional exterior algebras and $\BN(V)$ is finite-dimensional if $V$ is isomorphic either to $V_{3,1}$, $V_{3,5}$, $V_{2,2}$ or $V_{2,4}$, which as an algebra is isomorphic to quantum linear space but as a coalgebra is more complicated.
First, we study the Nichols algebras of the one-dimensional simple modules and their projective covers.

By Proposition $\ref{braidingone}$, the following result follows immediately.
\begin{lem}\label{lemNicholsgeneratedbyone}
The Nichols algebra $\BN(\K_{\chi^k})$ associated to $\K_{\chi^k}=\K v$ are
\begin{align*}
\BN(\K_{\chi^k})=\begin{cases}
\K v &\text{~if~} k=0,\,2,\,4,\\
\bigwedge \K_{\chi^k} &\text{~if~} k=1,\,3,\,5.\\
\end{cases}
\end{align*}
Moreover, let $V=\oplus_{i\in I}V_i$, where $V_i\cong \K_{\chi^{k_i}}$ with $k_i\in\{1,3,5\}$, and $I$ is a finite index set. Then $\BN(V)=\bigwedge V\cong \oplus_{i\in I}\BN(V_i)$.
\end{lem}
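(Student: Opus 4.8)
The plan is to reduce everything to the computation of the Nichols algebra of a one-dimensional object, which is entirely governed by the scalar $c(v\otimes v)=q\,v\otimes v$ coming from Proposition \ref{braidingone}. Recall the general fact: if $V=\K v$ is one-dimensional in $\CYD$ with braiding $c(v\otimes v)=q\,v\otimes v$, then $\BN(V)=\K[v]/(v^N)$ where $N$ is the order of $q$ if $q$ is a root of unity $\neq 1$, and $\BN(V)=\K[v]$ (infinite-dimensional) if $q=1$; in particular $\BN(V)=\bigwedge V=\K v\oplus\K v^2$ when $q=-1$. So first I would invoke Proposition \ref{braidingone}, which gives $c(v\otimes v)=(-1)^i v\otimes v$ for $\K_{\chi^i}$. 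For $k=0,2,4$ the scalar is $(-1)^k=1$, so $\BN(\K_{\chi^k})=T(\K v)=\K[v]\cong \K v$ in the sense meant here (the Nichols algebra is the whole tensor algebra; I would phrase the statement as ``$\BN(\K_{\chi^k})=\K v$'' following the paper's evident convention, i.e.\ $I(V)=0$). For $k=1,3,5$ the scalar is $-1$, hence $v^2$ is primitive and must be killed, giving $\BN(\K_{\chi^k})=\bigwedge\K_{\chi^k}$, a two-dimensional exterior algebra. These are immediate from the quoted braiding and the classification of rank-one Nichols algebras, so this part requires no real work.

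For the second assertion, let $V=\bigoplus_{i\in I}V_i$ with each $V_i\cong\K_{\chi^{k_i}}=\K v_i$, $k_i\in\{1,3,5\}$. The key point is that the braiding is of diagonal type on the basis $\{v_i\}_{i\in I}$: by the formula $c_{V,W}(v\otimes w)=v_{(-1)}\cdot w\otimes v_{(0)}$ in \eqref{equbraidingYDcat}, and since $\delta(v_i)=a^{3k_i}\otimes v_i$ with $a^{3k_i}\in\{1,a^3\}$ acting on $v_j$ by $\chi^{k_j}(a^{3k_i})=\xi^{3k_ik_j}=(-1)^{k_ik_j}$, we get $c(v_i\otimes v_j)=(-1)^{k_ik_j}v_j\otimes v_i$. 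Because every $k_i$ is odd, $k_ik_j$ is odd, so the full braiding matrix is $q_{ij}=-1$ for all $i,j$ — this is exactly the braiding matrix of an exterior algebra. By the standard structure theory of Nichols algebras of diagonal type with all $q_{ij}=-1$ (equivalently, the generalized Dynkin diagram is $A_1^{\times|I|}$ with all vertices and edges labelled $-1$; or directly: the quantum symmetrizer argument shows $\BN(V)=\bigwedge V$), one obtains $\BN(V)=\bigwedge V$, which decomposes as a graded braided vector space as $\bigoplus_{i\in I}\BN(V_i)$ — though I should be slightly careful that this last ``$\cong$'' is meant at the level of the underlying $\CYD$-objects / coalgebras, not as algebras, since $\bigwedge V$ has dimension $2^{|I|}$ while $\bigoplus_i\BN(V_i)$ has dimension $2|I|+?$; more precisely I would state $\BN(V)=\bigwedge V$ and note $\bigwedge V\cong\bigoplus_{i}\BN(V_i)$ only in the cases where the paper actually uses it, or interpret the isomorphism as the natural identification of generators.

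The main (and essentially only) obstacle is getting the off-diagonal braiding scalars $c(v_i\otimes v_j)$ right, since Proposition \ref{braidingone} as quoted only records the diagonal entries $c(v_i\otimes v_i)$; I need the coaction $\delta(v_i)=a^{3k_i}\otimes v_i$ (from the preceding Proposition describing $\K_{\chi^i}\in\CYD$) together with the action of $a^3$ on a one-dimensional module to compute them. Once $q_{ij}=-1$ for all $i,j$ is established, the conclusion $\BN(V)=\bigwedge V$ is a textbook computation: in each bidegree the braided factorial/symmetrizer has a one-dimensional kernel generated by $v_i^2$ and $v_iv_j+v_jv_i$, and quotienting yields precisely the exterior algebra. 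I would therefore organize the proof as: (1) quote Proposition \ref{braidingone} for the rank-one cases and conclude; (2) compute $c(v_i\otimes v_j)=-v_j\otimes v_i$ from the coaction; (3) cite the diagonal-type result (or the explicit symmetrizer argument) to conclude $\BN(V)=\bigwedge V$, and record the induced decomposition of generators.
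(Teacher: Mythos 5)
Your proposal is correct and follows the same route as the paper, which simply asserts that the lemma ``follows immediately'' from Proposition \ref{braidingone}; your computation of the off-diagonal entries $c(v_i\otimes v_j)=(-1)^{k_ik_j}v_j\otimes v_i=-v_j\otimes v_i$ from the coaction $\delta(v_i)=a^{3k_i}\otimes v_i$ is exactly the missing detail needed to justify the second assertion. You are also right to read ``$\K v$'' as the polynomial algebra $\K[v]$ (the ideal $I(V)$ is zero when $q=1$) and to flag that the final ``$\oplus_{i\in I}\BN(V_i)$'' only makes dimensional sense as the braided tensor product $\underline{\otimes}_{i\in I}\BN(V_i)\cong\bigwedge V$.
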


\begin{lem}
Let $\Pp_j$ be the projective cover of the one-dimensional $D$-module $\K_{\chi^j}$ for $j\in Z_6$. Then $dim\, \BN(\Pp_j)=\infty$.
\end{lem}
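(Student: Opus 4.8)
The plan is to exhibit an explicit braided subspace of $\Pp_j$ whose Nichols algebra is already infinite-dimensional, and then invoke the general fact that $\BN(W)\hookrightarrow \BN(V)$ as braided Hopf algebras whenever $W$ is a Yetter-Drinfeld submodule of $V$ (equivalently, a subobject stable under the braiding $c$). Thus $\dim\BN(W)=\infty$ forces $\dim\BN(\Pp_j)=\infty$. Looking at the braiding computed in Proposition $\ref{probraidingprocover}$, the first natural candidate is $W=\K p_{4,j}$: from the last line, $c(p_{4,j}\otimes p_{4,j})=(-1)^j p_{4,j}\otimes p_{4,j}$, so if $j$ is odd this one-dimensional piece already has $\BN(W)=\bigwedge W$, which is finite-dimensional — so that alone does not suffice. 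The better candidate is a two-dimensional subobject; inspecting the formulas, $c$ restricted to $\K\{p_{1,j},p_{4,j}\}$ does not close up (there is a $-p_{3,j}$ term), but $\K\{p_{3,j},p_{4,j}\}$ does: $c(p_{3,j}\otimes p_{3,j})=(-1)^{j+1}p_{3,j}\otimes p_{3,j}$, $c(p_{3,j}\otimes p_{4,j})=p_{4,j}\otimes p_{3,j}+(-1)^{j+1}\xi^2 p_{4,j}\otimes p_{4,j}$, $c(p_{4,j}\otimes p_{3,j})=p_{3,j}\otimes p_{4,j}$, $c(p_{4,j}\otimes p_{4,j})=(-1)^j p_{4,j}\otimes p_{4,j}$. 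So first I would check that $W:=\K\{p_{3,j},p_{4,j}\}$ is a Yetter-Drinfeld submodule of $\Pp_j$ (verify $\C$-subcomodule and $\D$-submodule directly from the action/coaction formulas), hence $\BN(W)\hookrightarrow\BN(\Pp_j)$.

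Next I would analyze $\BN(W)$ for this $2$-dimensional braided vector space. The braiding matrix in the basis $(p_{3,j},p_{4,j})$ is triangular with diagonal entries $q_{11}=(-1)^{j+1}$, $q_{22}=(-1)^{j}$ and off-diagonal structure coming from the $\xi^2$ term. This is a braiding of "triangular" (rank-one plus nilpotent) type rather than diagonal type. The strategy is to show $\BN(W)$ is infinite-dimensional by a direct argument on low-degree relations: compute $\BN(W)$ in degrees $2$ and $3$, identify the quantum symmetrizer kernel, and show that one of the generators (the one with $q_{ii}=1$, i.e. $p_{4,j}$ when $j$ is even, or $p_{3,j}$ when $j$ is odd) is \emph{not} nilpotent in $\BN(W)$ — equivalently, that $p^n$ never lies in the kernel of the $n$-th quantum symmetrizer. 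Concretely, for a generator $p$ with $c(p\otimes p)=p\otimes p$, the quantum symmetrizer on $p^{\otimes n}$ equals $n!\, p^{\otimes n}\neq 0$, so $p^n\neq 0$ in $\BN(W)$ for all $n$; this instantly gives $\dim\BN(W)=\infty$. So the key observation to isolate is: $\Pp_j$ always contains a one-dimensional YD-submodule on which $c$ acts as $+\mathrm{id}$, namely $\K p_{4,j}$ if $j$ is even and $\K p_{3,j}$ if $j$ is odd — wait, but $\K p_{3,j}$ alone is \emph{not} a subcomodule (its coaction involves $p_{4,j}$), whereas $\K p_{4,j}$ \emph{is} a subobject with $c=(-1)^j$. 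So the clean statement splits into cases.

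Therefore the cleanest writeup: for $j$ even, $\K p_{4,j}$ is a YD-submodule with $c(p_{4,j}\otimes p_{4,j})=p_{4,j}\otimes p_{4,j}$, so $\BN(\K p_{4,j})=\K[p_{4,j}]$ is a polynomial algebra, infinite-dimensional, and it embeds in $\BN(\Pp_j)$; for $j$ odd, use instead the $2$-dimensional submodule $W=\K\{p_{3,j},p_{4,j}\}$ and the argument above applied to the generator $p_{4,j}$, for which $c(p_{4,j}\otimes p_{4,j})=-\,p_{4,j}\otimes p_{4,j}$ — hmm, that is $-\mathrm{id}$, not helpful. For $j$ odd I would instead use $p_{3,j}$: but $\K p_{3,j}$ is not a subcomodule. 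So for $j$ odd one really must work inside the $2$-dimensional $W$ and show $p_{3,j}$ is not nilpotent in $\BN(W)$; since the braiding on $W$ is upper-triangular with the $p_{3,j}$-block acting by $-1$ but the cross terms non-trivial, I expect $\BN(W)$ to be infinite-dimensional of "Jordanian-like" type. The main obstacle is exactly this odd-$j$ case: proving $p_{3,j}^n\neq 0$ in $\BN(W)$ for all $n$. I would handle it by the standard trick of evaluating the braided commutator/ the PBW-type argument: show the subalgebra generated by $p_{3,j}$ and the element $z:=p_{4,j}$ satisfies a skew-derivation relation making $\{p_{3,j}^m z^\varepsilon\}$ linearly independent in $\BN(W)$, or more simply quote that any braided vector space containing a "Jordan block of eigenvalue $-1$" has infinite-dimensional Nichols algebra (cf.\ the classification of rank-two Nichols algebras of triangular type). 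Alternatively — and this is the route I'd actually take to avoid case analysis — observe that $\Pp_j\cong\Pp_0\otimes\K_{\chi^j}$ and tensoring by the invertible one-dimensional object $\K_{\chi^j}$ changes the braiding only by the scalar $(-1)^j$ on the relevant components, so it suffices to prove $\dim\BN(\Pp_0)=\infty$ and $\dim\BN(\Pp_3)=\infty$ (the two parity classes), reducing everything to two explicit finite computations of the quantum symmetrizer through degree $3$ or $4$.
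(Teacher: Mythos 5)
Your even-$j$ half is exactly the paper's argument, but your odd-$j$ half goes off the rails and ends with the key step explicitly left unproven. The first problem is a sign slip: you correctly record $c(p_{3,j}\otimes p_{3,j})=(-1)^{j+1}p_{3,j}\otimes p_{3,j}$ early in the proposal, but when you come to use it you assert that for $j$ odd the $p_{3,j}$-block acts by $-1$. For $j$ odd, $(-1)^{j+1}=+1$, so $p_{3,j}$ is an eigenvector of the braiding with eigenvalue $1$ --- exactly the situation you already handled via $p_{4,j}$ in the even case. There is no ``Jordanian'' analysis to do, and the statement you flag as ``the main obstacle'' ($p_{3,j}^n\neq 0$ for all $n$) follows at once: every braid generator fixes $p_{3,j}^{\otimes n}$, so the quantum symmetrizer sends it to $n!\,p_{3,j}^{\otimes n}\neq 0$ in characteristic zero.

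The second problem is the worry that $\K p_{3,j}$ is not a subcomodule; this is a red herring. The implication ``$c(v\otimes v)=v\otimes v$ forces $v^n\neq 0$ in $\BN(V)$'' needs only that $\K v$ be a braided subspace of $(V,c)$, which is automatic for an eigenvector of $c$: the Nichols algebra is $\oplus_n T^n(V)/\ker\mathfrak{S}_n$ and the symmetrizers $\mathfrak{S}_n$ are built from $c$ alone, so $\mathfrak{S}_n(v^{\otimes n})=n!\,v^{\otimes n}$ regardless of whether $\K v$ is a Yetter--Drinfeld subobject. This is precisely the paper's one-line proof: $c(p_{4,j}\otimes p_{4,j})=p_{4,j}\otimes p_{4,j}$ for $j=0,2,4$ and $c(p_{3,j}\otimes p_{3,j})=p_{3,j}\otimes p_{3,j}$ for $j=1,3,5$. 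With the sign corrected and the subcomodule concern discarded, your proposal collapses to the paper's argument; as written, however, the odd-$j$ case is a genuine gap.
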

\begin{proof}
By Proposition $\ref{probraidingprocover}$, the braiding of $\Pp_j$ for all $j\in Z_6$ has an eigenvector of eigenvalue $1$. Indeed, $c(p_{4,j}\otimes p_{4,j})=p_{4,j}\otimes p_{4,j}$ for $j=0,\,2,\,4$, $c(p_{3,j}\otimes p_{3,j})=p_{3,j}\otimes p_{3,j}$ for $j=1,\,3,\,5$.
\end{proof}

Now we show that Nichols algebras of non-simple indecomposable modules are all infinite-dimensional.
\begin{lem}
Let $V\in\CYD$ be a finite-dimensional module such that $\dim\,\BN(V)<\infty$. Then $\dim\,\BN(W)<\infty$ for all $W\in Soc(V)$ or $W\in Top(V)$.
\end{lem}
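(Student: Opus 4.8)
The plan is to exploit the fact that a Nichols algebra is functorial enough that finiteness passes to "subquotients that are themselves Yetter--Drinfeld submodules carrying a Nichols algebra". More precisely, $\mathrm{Soc}(V)$ is a Yetter--Drinfeld submodule of $V$, so the inclusion $\iota\colon \mathrm{Soc}(V)\hookrightarrow V$ is a morphism in $\CYD$; it induces an injective algebra (and coalgebra) map $\mathfrak{B}(\mathrm{Soc}(V))\hookrightarrow \mathfrak{B}(V)$ because a morphism of braided vector spaces that is injective in degree $1$ stays injective on the whole Nichols algebra (the Nichols algebra of a braided subspace embeds in the Nichols algebra of the ambient space). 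Hence $\dim\mathfrak{B}(\mathrm{Soc}(V))\le \dim\mathfrak{B}(V)<\infty$, and then for any $W\in\mathrm{Soc}(V)$, since $\mathrm{Soc}(V)$ is semisimple, $W$ is a direct summand of $\mathrm{Soc}(V)$ in $\CYD$; a direct summand $W\subset U$ in $\CYD$ gives $\mathfrak{B}(W)\hookrightarrow\mathfrak{B}(U)$ again by the same injectivity principle (alternatively, the projection $U\twoheadrightarrow W$ splits in $\CYD$, so $\mathfrak{B}(U)$ surjects onto $\mathfrak{B}(W)$), whence $\dim\mathfrak{B}(W)<\infty$.

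For the $\mathrm{Top}(V)$ statement I would dualize. First recall $\mathrm{Top}(V)=V/\mathrm{rad}(V)$, so there is a surjection $\pi\colon V\twoheadrightarrow \mathrm{Top}(V)$ in $\CYD$; dually, $\pi\As\colon \mathrm{Top}(V)\As\hookrightarrow V\As$ is injective in $\CYD$ (using that $\CYD$ is rigid, as recalled in the preliminaries). By Proposition~\ref{proNicholsdual}, since $\dim\mathfrak{B}(V)<\infty$ we have $\dim\mathfrak{B}(V\As)=\dim\mathfrak{B}(V)<\infty$. Applying the already-established socle case to $V\As$: $\mathrm{Top}(V)\As$ is a submodule of $V\As$, hence $\mathfrak{B}(\mathrm{Top}(V)\As)\hookrightarrow\mathfrak{B}(V\As)$ and so $\dim\mathfrak{B}(\mathrm{Top}(V)\As)<\infty$; one more application of Proposition~\ref{proNicholsdual} gives $\dim\mathfrak{B}(\mathrm{Top}(V))=\dim\mathfrak{B}(\mathrm{Top}(V)\As)<\infty$. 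Finally, for $W\in\mathrm{Top}(V)$, semisimplicity of $\mathrm{Top}(V)$ makes $W$ a direct summand in $\CYD$, and as in the first paragraph $\dim\mathfrak{B}(W)<\infty$. Strictly speaking one does not even need to pass through $W$: once $\mathfrak{B}(\mathrm{Soc}(V))$ and $\mathfrak{B}(\mathrm{Top}(V))$ are finite-dimensional, every YD-direct-summand of a semisimple module has finite-dimensional Nichols algebra, which is exactly the assertion.

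The only real point requiring care — and the step I expect to be the main obstacle to state cleanly rather than to prove — is the injectivity/functoriality input: if $W\subset U$ is a subobject in $\CYD$ with the induced braiding, then the canonical graded algebra map $\mathfrak{B}(W)\to\mathfrak{B}(U)$ is injective. This is standard (it follows from the universal characterization in the Lemma after the definition of $\mathfrak{B}(V)$: the image of $\mathfrak{B}(W)$ in $\mathfrak{B}(U)$ is a graded braided Hopf subalgebra generated in degree one by $W$ with all primitives in degree one, hence is $\mathfrak{B}(W)$), but because $\mathrm{rad}(V)$ and $\mathrm{soc}(V)$ are only guaranteed to be $\C$-submodules a priori, I would first note explicitly that they are also $\C$-subcomodules — this holds because the coradical of $\C$ (equivalently, the functor relating $\C$-comodules to $\D$-modules) interacts well with the module structure, or more directly because $\mathrm{soc}$ and $\mathrm{rad}$ are defined by the $\D$-action and $V$ being in $\CYD\simeq{}_\D\M$ means these are automatically $\D$-submodules, hence YD-submodules. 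With that observation in place the argument is a short diagram chase.
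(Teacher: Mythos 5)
The paper states this lemma without proof, so there is no argument of the authors' to compare against; judged on its own, your proposal is correct and is precisely the standard argument the authors implicitly invoke. The two ingredients --- that a Yetter--Drinfeld subobject $W\subseteq V$ induces an embedding $\BN(W)\hookrightarrow\BN(V)$ (so $Soc(V)$ and each of its simple summands inherit finiteness), and that the quotient $Top(V)$ is handled by passing to duals via Proposition~\ref{proNicholsdual} and applying the submodule case to $Top(V)\As\hookrightarrow V\As$ --- are exactly what is needed, and your closing remark that $Soc(V)$ and $\operatorname{rad}(V)$ are automatically subobjects in $\CYD$ because of the abelian equivalence $\DM\simeq\CYD$ settles the one point that genuinely requires care.
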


\begin{pro}\label{proNicholsindecom}
Let $V\in\CYD$ be a finite-dimensional non-simple indecomposable module. Then $\BN(V)=\infty$.
\end{pro}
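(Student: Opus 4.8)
The plan is to reduce the claim to the previous lemma, which says that if $\dim\BN(V)<\infty$ then every simple submodule in $Soc(V)$ and every simple quotient in $Top(V)$ also has finite-dimensional Nichols algebra. So suppose, for contradiction, that $V$ is a finite-dimensional non-simple indecomposable object of $\CYD$ with $\dim\BN(V)<\infty$. By Remark $\ref{rmkSocToponedim}$, since every two-dimensional simple $V_{i,j}$ is projective (Lemma $\ref{lemProjectwodimsimple}$) and hence also injective in $\CYD$, none of the $V_{i,j}$ can sit inside the socle or the top of a non-simple indecomposable; therefore both $Soc(V)$ and $Top(V)$ are direct sums of one-dimensional modules $\K_{\chi^k}$. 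Applying the preceding lemma, every such $\K_{\chi^k}$ appearing in $Soc(V)$ or $Top(V)$ must satisfy $\dim\BN(\K_{\chi^k})<\infty$, so by Lemma $\ref{lemNicholsgeneratedbyone}$ we must have $k\in\{1,3,5\}$ (the characters with $(-1)^k=-1$); equivalently, the only one-dimensional modules that may occur in $Soc(V)$ or $Top(V)$ are $\K_{\chi^1},\K_{\chi^3},\K_{\chi^5}$.

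**Main step.** Now $V$ is non-simple and indecomposable, so it has a nonzero radical and a nonzero socle with $Soc(V)\subsetneq V$; choose a simple submodule $\K_{\chi^\ell}\subseteq Soc(V)$ and a simple quotient $\K_{\chi^m}$ of $V$, both with $\ell,m\in\{1,3,5\}$. Since $V$ is indecomposable and not semisimple, $V$ contains a two-dimensional non-simple indecomposable subquotient built from these pieces; more precisely, by Lemma $\ref{lemTwononsimpleindecom}$ the $2$-dimensional non-simple indecomposables containing $\K_{\chi^\ell}$ are exactly $M_\ell^+$, $M_\ell^-$, $M_\ell^\pm$, with tops $\K_{\chi^{\ell+1}}$, $\K_{\chi^{\ell-1}}$, $\K_{\chi^{\ell\pm3}}$ respectively. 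Since $\ell$ is odd, $\ell+1$ and $\ell-1$ are even, so $M_\ell^+$ and $M_\ell^-$ have an even-character top $\K_{\chi^{\text{even}}}$ whose Nichols algebra we have already excluded; only $M_\ell^\pm$, whose top is $\K_{\chi^{\ell\pm3}}$ (again odd), survives. Thus the only obstruction-free possibility is that $V$ is built entirely from the modules $M_\ell^\pm$ with $\ell$ odd. I would then use the explicit braiding: for such a two-dimensional piece $M$ with basis $\{m_1,m_2\}$, $\K m_1\cong\K_{\chi^\ell}$ with $\ell$ odd gives $c(m_1\otimes m_1)=-m_1\otimes m_1$, but the braiding on the whole of $M$ is no longer diagonal — the off-diagonal entry coming from $x\cdot m_2=\beta m_1$ with $\beta=2\xi\chi^\ell(a^2)\neq0$ produces a nonzero Jordan-type block. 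Concretely, in such a $2$-block the braiding restricted to the $m_2$-line has the form $c(m_2\otimes m_2)=-m_2\otimes m_2+(\text{nonzero multiple})\,m_1\otimes m_1$ together with $c(m_1\otimes m_2)$, $c(m_2\otimes m_1)$ proportional to the "swapped" tensors; this is precisely a $2\times2$ Jordan block of eigenvalue $-1$ for the flip composed with the braiding. A braided vector space containing a $2$-dimensional block of non-diagonal (non-semisimple) Jordanian type with eigenvalue $-1$ has infinite-dimensional Nichols algebra — this is the classical fact that $\BN$ of a Jordan block $\mathcal J_2(\epsilon)$ is infinite-dimensional unless $\epsilon$ is not a root of unity of the right kind, and here $\epsilon=-1$ together with the nilpotent part forces $\dim\BN=\infty$. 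That contradiction finishes the proof.

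**Where the difficulty lies.** The genuinely delicate point is the last one: converting "$V$ is non-simple indecomposable with odd-character socle/top" into an explicit braided subspace of $V\oplus V$ (inside $V\otimes V$) on which the braiding is a non-semisimple $2\times2$ block, and then invoking the infinite-dimensionality of the Nichols algebra of such a block. One has to be careful that the relevant subspace is actually a Yetter-Drinfeld submodule (equivalently, a braided subspace in the sense that $\BN$ of it embeds in $\BN(V)$), which is where Lemma $\ref{lemTwononsimpleindecom}$ and the structure of $Soc(V)$, $Top(V)$ are used: the inclusion $\K_{\chi^\ell}\hookrightarrow V$ and a lift of a generator of a simple quotient span a $2$-dimensional subobject or subquotient isomorphic to some $M_\ell^\bullet$, and finite-dimensionality of $\BN(V)$ forces finite-dimensionality of $\BN$ of that piece by the previous lemma (applied one more level down, or by the general principle that $\BN$ of a subobject embeds). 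I would present the Jordan-block infinite-dimensionality either by citing the standard classification of rank-one non-diagonal Nichols algebras or by exhibiting directly that the braided symmetrizer has nonzero kernel in every degree, so that no relations can truncate $\BN(M_\ell^\pm)$ in finite dimension; this handles the remaining case and completes the contradiction, proving $\dim\BN(V)=\infty$.
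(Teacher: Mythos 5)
Your overall strategy --- reduce to the socle and top via the preceding lemma, observe that only the odd characters $\K_{\chi^k}$, $k\in\{1,3,5\}$, have finite-dimensional Nichols algebra, and then inspect the two-dimensional non-simple indecomposables $M_\ell^{+}$, $M_\ell^{-}$, $M_\ell^{\pm}$ --- is exactly the paper's. But there is a concrete arithmetic slip that derails the second half: if $\ell$ is odd then $\ell\pm 3$ is \emph{even} modulo $6$ (adding $3$ flips parity), so the top $\K_{\chi^{\ell\pm 3}}$ of $M_\ell^{\pm}$ has braiding eigenvalue $(-1)^{\ell\pm3}=+1$ and hence infinite-dimensional Nichols algebra, just like the tops of $M_\ell^{+}$ and $M_\ell^{-}$. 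In other words, \emph{no} two-dimensional non-simple indecomposable survives your screening: for every $M_\ell^{\bullet}$ the socle character $\ell$ and the top character ($\ell\pm1$ or $\ell\pm3$) have opposite parities, so one of $\BN(Soc)$, $\BN(Top)$ is a polynomial algebra and the Soc/Top lemma gives the contradiction immediately. Correcting the parity computation therefore \emph{shortens} your proof and collapses it onto the paper's argument; as written, however, your proof asserts a false statement (``$\ell\pm3$ again odd'') and then rests the remaining case on it.

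The Jordan-block detour that this error forces on you is also not adequately justified on its own terms. The braiding in $\CYD$ is $c(u\otimes w)=u_{(-1)}\cdot w\otimes u_{(0)}$, so it is governed by the $\C$-\emph{coaction} on $M_\ell^{\pm}$, which is never computed (neither by you nor by the paper); the matrices $[x]$ and $[b]$ of the $\D$-action do not by themselves give the entries $c(m_2\otimes m_2)$, etc., so the claimed non-semisimple block of eigenvalue $-1$ is not established. Two smaller points: the passage from a general non-simple indecomposable $V$ to a two-dimensional non-simple indecomposable \emph{subquotient} is asserted rather than proved --- the paper runs a short induction on $\dim V$ through $Soc(V)$ and $Soc(V/Soc(V))$ to extract such a piece, and some care is needed since the intermediate modules could be decomposable; and your appeal to injectivity of the $V_{i,j}$ (to exclude them from the socle) should be flagged as using that a finite-dimensional Hopf algebra is Frobenius, so projective equals injective. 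None of these smaller points is fatal, but the parity error and the unverified braiding of $M_\ell^{\pm}$ are genuine gaps in the proof as written.
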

\begin{proof}
If $\dim\,V=2$, then by lemma $\ref{lemTwononsimpleindecom}$, we have that $M_l^{+}$, $M_l^{-}$, or $M_l^{\pm}$ for some $l\in Z_6$. In particular,  $Soc(M_l^{+})=\K_{\chi^{l}}, Top(M_l^{+})=\K_{\chi^{l+1}}$, $Soc(M_l^{\pm})=\K_{\chi^{l}}, Top(M_l^{\pm})=\K_{\chi^{l\pm 3}}$, and $Soc(M_l^{-})=\K_{\chi^{l}}, Top(M_l^{-})=\K_{\chi^{l-1}}$. Thus we we must have that $\dim \BN(Top(V))=\infty$ or $\dim\,\BN(Soc(V))=\infty$, which implies $\BN(V)=\infty$.

Now assume that $\dim\,V=d>2$, we prove the claim by induction on d. By Remark $\ref{rmkSocToponedim}$, $Soc(V)$ consists of one-dimensional modules. Let $\widetilde{N}$ be a simple module contained in $Soc(V/Soc(V))$ and $N$ be the corresponding submodule of $V$. Then $\dim\,\widetilde{N}=1$. If the $Soc(V)$ is simple, that is, $V$ is a two-dimensional non-simple indecomposable, then $\dim\,\BN(V)=\infty$. If the $Soc(V)$ is not simple, then it must properly contain a one-dimensional simple module $\K_{\chi^l}$. If $N/\K_{\chi^l}$ is semisimple, $N$ contains an two-dimensional indecomposable module which implies $\dim\BN(N/\K_{\chi^l})=\infty$ and whence $\dim\BN(V)=\infty$. If $N/\K_{\chi^l}$ is not semisimple, it must contain an indecomposable module of dimension less than $d$. By induction, $\dim\BN(N/\K_{\chi^l})=\infty$ and whence $\dim\,\BN(V)=\infty$.
\end{proof}

We have shown that Nichols algebras of non-simple indecomposable modules are all infinite-dimensional. Thus we have the following
\begin{cor}
If $\BN(V)$ is finite-dimensional, then $V$ must be semisimple.
\end{cor}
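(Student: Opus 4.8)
The plan is to deduce the corollary directly from Proposition~\ref{proNicholsindecom} by a short structural argument on the Yetter-Drinfeld module $V$. First I would reduce to the case where $V$ is finite-dimensional, since the Nichols algebra of an infinite-dimensional $V$ cannot be finite-dimensional (already $\mathcal{B}^1(V)=V$ would be infinite-dimensional), so the hypothesis forces $\dim V<\infty$.

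Next, suppose for contradiction that $V$ is not semisimple. Then the category $\CYD \simeq {}_D\mathcal{M}$ being a finite abelian category, $V$ admits a decomposition into indecomposable summands $V\cong \bigoplus_{i} V_i$, and since $V$ is not semisimple at least one summand, say $V_1$, is a non-simple indecomposable module. The key point is that $V_1$ is a Yetter-Drinfeld submodule (and quotient) of $V$, hence the canonical projection and inclusion are morphisms in $\CYD$; by functoriality of the Nichols algebra construction on subobjects, $\BN(V_1)$ embeds into $\BN(V)$ as a braided subalgebra (alternatively, one uses that the braiding on $V_1$ is the restriction of that on $V$, so $T(V_1)\hookrightarrow T(V)$ descends to $\BN(V_1)\hookrightarrow \BN(V)$). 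Therefore $\dim\BN(V)\geq \dim\BN(V_1)$.

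Now apply Proposition~\ref{proNicholsindecom} to the finite-dimensional non-simple indecomposable module $V_1$: it gives $\dim\BN(V_1)=\infty$, whence $\dim\BN(V)=\infty$, contradicting the hypothesis that $\BN(V)$ is finite-dimensional. This forces $V$ to be semisimple.

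The main obstacle, and the only step requiring care, is the claim that a Yetter-Drinfeld direct summand of $V$ gives rise to a subalgebra of $\BN(V)$ whose dimension bounds that of $\BN(V_1)$ from below; this is where one must invoke that $\BN(-)$ is a functor that sends monomorphisms in $\CYD$ to injective graded braided Hopf algebra maps (equivalently, that $\BN(V_1)$ is a quotient of the subalgebra of $\BN(V)$ generated by $V_1$, but for a direct summand with the restricted braiding these coincide). Once this functoriality is granted, the corollary is immediate from Proposition~\ref{proNicholsindecom}. One could alternatively bypass functoriality entirely by arguing on the socle: if $V$ is not semisimple then $\mathrm{Soc}(V)\subsetneq V$, and the same reasoning as in the proof of Proposition~\ref{proNicholsindecom} produces a non-simple indecomposable subquotient whose Nichols algebra is infinite-dimensional, forcing $\dim\BN(V)=\infty$.
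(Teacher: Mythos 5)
Your argument is correct and is exactly the one the paper intends: the paper states the corollary as an immediate consequence of Proposition~\ref{proNicholsindecom}, leaving implicit precisely the steps you spell out (Krull--Schmidt decomposition of $V$, the fact that a Yetter--Drinfeld direct summand is a braided subspace so that $\BN(V_1)$ is a subalgebra of $\BN(V)$, and the reduction to $\dim V<\infty$). Your fleshed-out version, including the care taken over the embedding $\BN(V_1)\hookrightarrow\BN(V)$, is a faithful and complete rendering of the paper's approach.
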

Next, we analyze the Nichols algebras associated to two-dimensional simple modules.
\begin{lem}
Let $\Lambda\As=\Lambda-\{(1,1),(4,2),(3,1),(2,2),(1,4),(4,5),(2,4),(3,5),\\(4,4),(1,5),(4,1),(1,2)\}$. Then $\dim\,\BN(V_{k,l})=\infty$, for all $(k,l)\in\Lambda\As$.
\end{lem}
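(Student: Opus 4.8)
The plan is to show that for each pair $(k,l)\in\Lambda\As$ the braiding $c$ on $V_{k,l}\otimes V_{k,l}$, as computed explicitly in Proposition $\ref{probraidsimpletwo}$, admits an eigenvector with eigenvalue $1$; by the standard criterion for Nichols algebras (if $c$ has an eigenvalue $1$ on a subspace, the associated quantum symmetrizer does not truncate and $\BN(V)$ is infinite-dimensional, cf.\ the eigenvalue argument already used for $\BN(\Pp_j)$ above), this forces $\dim\,\BN(V_{k,l})=\infty$. More precisely, I would go through the six cases $j=0,1,2,3,4,5$ of Proposition $\ref{probraidsimpletwo}$ separately, in each case inspecting the $4\times 4$ matrix of $c$ in the basis $\{v_1\otimes v_1, v_1\otimes v_2, v_2\otimes v_1, v_2\otimes v_2\}$ and determining precisely for which values of $\Lam_1=\xi^i$ the characteristic polynomial has $1$ as a root (equivalently $\det(c-\mathrm{id})=0$). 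The claim is then that the complement of the offending values — exactly the twelve pairs removed to form $\Lambda\As$ — is where no eigenvalue $1$ occurs, while on all of $\Lambda\As$ some eigenvalue $1$ persists.

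The computation organizes naturally by the ``diagonal blocks'' of the triangular-type braiding. In cases $j=0$ and $j=3$ the matrix of $c$ has the very simple form in which $v_1\otimes v_1$ and $v_2\otimes v_2$ are already eigenvectors of eigenvalue $1$ (read off directly from parts (1) and (4) of Proposition $\ref{probraidsimpletwo}$), so for those $j$ \emph{every} admissible $i$ gives eigenvalue $1$; this explains why the removed set contains no pair with $l=0$ or $l=3$, and conversely why all such pairs already lie outside $\Lambda\As$ only because they are the would-be finite ones $V_{3,1},V_{3,5},\dots$ — wait, more carefully: for $j\in\{0,3\}$ every $(i,j)\in\Lambda$ has $\dim\BN=\infty$, and indeed none of the twelve removed pairs has second coordinate $0$ or $3$. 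For $j\in\{1,2,4,5\}$ the vectors $v_1\otimes v_1$ and $v_2\otimes v_2$ span a $c$-invariant $2\times 2$ block whose matrix (from parts (2),(3),(5),(6)) is upper- or lower-triangular with diagonal entries involving $\Lam_1^{2j}\xi^{\ast}$ and products like $\xi\Lam_1^3+\xi^2$; setting one of these diagonal entries — or, when the block is genuinely non-diagonalizable, the relevant eigenvalue of the $2\times2$ block — equal to $1$ yields finitely many values of $i$, and the remaining $v_1\otimes v_2$, $v_2\otimes v_1$ block contributes eigenvalues $\Lam_1^{2j}$ and $\xi^{\ast}\Lam_1^{2j}$ whose being $\ne 1$ must also be checked. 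Collecting all pairs $(i,j)$ for which \emph{no} branch gives eigenvalue $1$ produces precisely the excluded list $\{(1,1),(4,2),(3,1),(2,2),(1,4),(4,5),(2,4),(3,5),(4,4),(1,5),(4,1),(1,2)\}$, so $\dim\BN(V_{k,l})=\infty$ for all $(k,l)\in\Lambda\As$.

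The main obstacle is bookkeeping rather than conceptual: one must be careful that having eigenvalue $1$ on the diagonal of a triangular block really does obstruct finiteness even when that block is not diagonalizable, and one must correctly diagonalize (or compute eigenvalues of) the mixed $\{v_1\otimes v_2,\,v_2\otimes v_1\}$ block, whose off-diagonal entries are nonzero, to be sure no \emph{accidental} eigenvalue $1$ appears for a pair we intend to keep out of $\Lambda\As$ (i.e.\ to be sure the twelve excluded pairs are exactly the right ones and not a proper subset). Since the scalars are all powers of the primitive sixth root $\xi$, each such ``$=1$'' condition is a congruence in $Z_6$ and can be settled by a finite check; I would present the argument as one representative case worked in detail (say $j=1$) and tabulate the outcomes for the other values of $j$, noting that the union of the non-obstructed pairs is $\Lambda\As$ by direct inspection. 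One should also remark that the pairs \emph{removed} from $\Lambda$ — among them $V_{3,1},V_{3,5},V_{2,2},V_{2,4}$ and their duals — are exactly those whose Nichols algebras will be shown finite-dimensional in the sequel, which is consistent with $\BN(V\As)\cong\BN(V)\As$ from Proposition $\ref{proNicholsdual}$ and the observed closure of the removed set under $(i,j)\mapsto(-i-1,-j-3)$.
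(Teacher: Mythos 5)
Your overall strategy (detect eigenvalue $1$ in the braiding) is in the right family of ideas, but the criterion you invoke is stated too broadly, and this is a genuine gap. It is \emph{not} true that an eigenvector of eigenvalue $1$ for $c$ on $V\otimes V$ forces $\dim\BN(V)=\infty$: already for a diagonal braiding with $q_{11}=q_{22}=-1$ and $q_{12}q_{21}=1$ the mixed block on $\mathrm{span}\{v_1\otimes v_2,\,v_2\otimes v_1\}$ has eigenvalues $\pm 1$, yet $\BN(V)$ is the $4$-dimensional quantum exterior algebra. The obstruction that actually works --- and the only one the paper ever uses, both here and for $\BN(\Pp_j)$ --- is the existence of a nonzero $v\in V$ with $c(v\otimes v)=v\otimes v$: then $\K v$ is a braided line with trivial braiding, so $\K[v]$ embeds in $\BN(V)$ as an infinite-dimensional polynomial subalgebra. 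Consequently your plan of computing the characteristic polynomial of the full $4\times 4$ matrix of $c$, and in particular of hunting for eigenvalue $1$ inside the $\{v_1\otimes v_2,\,v_2\otimes v_1\}$ block, proves nothing even if the bookkeeping is done correctly; and, in the other direction, you do not need to verify that the twelve excluded pairs admit no eigenvalue $1$, since the lemma makes no claim about them.

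Second, even after replacing your criterion by the correct one, the direct computation only reaches $12$ of the $18$ pairs in $\Lambda\As$. From Proposition \ref{probraidsimpletwo} one reads off $c(v_1\otimes v_1)=\xi^{-kl}\,v_1\otimes v_1$, so $v_1\otimes v_1$ is fixed exactly when $kl\equiv 0 \pmod 6$, i.e.\ for the six pairs with $l\in\{0,3\}$ together with $(0,1),(0,2),(3,2),(0,4),(3,4),(0,5)$ --- precisely the twelve pairs the paper lists. For the remaining six pairs $(5,2),(5,1),(2,1),(5,5),(2,5),(5,4)$ neither $v_1\otimes v_1$ nor, without substantial further work, any other vector of the form $v\otimes v$ is visibly fixed; the paper disposes of these by the duality $V_{i,j}\As\cong V_{-i-1,-j-3}$ combined with $\BN(V\As)\cong\BN(V)\As$ (Proposition \ref{proNicholsdual}), which transfers infiniteness from the twelve pairs already treated. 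The duality observation you mention only as a closing consistency check is therefore an indispensable half of the argument, not a remark.
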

\begin{proof}
By Proposition $\ref{probraidsimpletwo}$, the braiding of $V_{k,l}$ for
$(k,l)$ belonging to
$$\{(1,0),(4,3),(3,0), (2,3),(5,0),(0,3),(0,1),(0,2),(3,2),(0,4),(3,4),(0,5)\}$$
has an eigenvector of eigenvalue $1$. Indeed, $c(v_1\otimes v_1)=v_1\otimes v_1$ in the above cases. And $V_{0,1}\As\cong V_{5,2}$,
$V_{0,2}\As\cong V_{5,1}$, $V_{3,2}\As\cong V_{2,1}$, $V_{0,4}\As\cong V_{5,5}$, $V_{3,4}\As\cong V_{2,5}$,
$V_{0,5}\As\cong V_{5,4}$, then by Proposition $\ref{proNicholsdual}$, the claim follows.
\end{proof}

The rest are $V_{1,1}\As\cong V_{4,2}$,
$V_{3,1}\As\cong V_{2,2}$, $V_{1,4}\As\cong V_{4,5}$, $V_{2,4}\As\cong V_{3,5}$, $V_{4,4}\As\cong V_{1,5}$,
$V_{4,1}\As\cong V_{1,2}$. And we will show Nichols algebra $\BN(V)$ is finite-dimensional if V is isomorphic either to $V_{3,1}$, $V_{3,5}$, $V_{2,2}$ or $V_{2,4}$, and describe them in terms of generators and relations.

\begin{pro}\label{proV31}
$\BN(V_{3,1}):=\K\langle v_1, v_2\mid v_1^2=0, v_1v_2-\xi^2v_2v_1=0, v_2^3=0\rangle$. In particular, $\dim\,\BN(V_{3,1})=6$.
\end{pro}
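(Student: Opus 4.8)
The plan is to compute $\BN(V_{3,1})$ degree by degree by means of the quantum symmetrizers $\mathfrak S_n\colon V_{3,1}^{\otimes n}\to V_{3,1}^{\otimes n}$, recalling that $I(V_{3,1})_n=\ker\mathfrak S_n$ and that $\BN(V_{3,1})$ is generated in degree one. First I would specialise Proposition~\ref{probraidsimpletwo} (case $j=1$) to $(i,j)=(3,1)$, i.e. $\Lam_1=\xi^3=-1$ and $\Lam_2=\xi$; using $\xi^2=\xi-1$, $\xi^3=-1$, $\xi^4=-\xi$ and $\xi^5=1-\xi$, the braiding of $V_{3,1}=\K\{v_1,v_2\}$ reads
\begin{align*}
c(v_1\otimes v_1)&=-\,v_1\otimes v_1, & c(v_1\otimes v_2)&=(\xi^2-1)\,v_1\otimes v_2-\xi^5\,v_2\otimes v_1,\\
c(v_2\otimes v_1)&=v_1\otimes v_2, & c(v_2\otimes v_2)&=\xi^2\,v_2\otimes v_2+(\xi^2-\xi)\,v_1\otimes v_1.
\end{align*}

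In degree $2$, with respect to the basis $v_1\otimes v_1,\,v_1\otimes v_2,\,v_2\otimes v_1,\,v_2\otimes v_2$ the map $c$ is block diagonal with $2\times2$ blocks $\left(\begin{smallmatrix}-1 & \xi^{2}-\xi \\ 0 & \xi^{2}\end{smallmatrix}\right)$ and $\left(\begin{smallmatrix}\xi^{2}-1 & 1 \\ -\xi^{5} & 0\end{smallmatrix}\right)$; a direct check shows that each block has eigenvalues $-1$ and $\xi^2$, so $c$ has eigenvalues $-1,-1,\xi^2,\xi^2$. Since $1+\xi^2=\xi\neq0$, the symmetrizer $\mathfrak S_2=\mathrm{id}+c$ has rank $2$, with $\ker\mathfrak S_2=\K(v_1\otimes v_1)\oplus\K(v_1\otimes v_2-\xi^2\,v_2\otimes v_1)$. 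This yields the relations $v_1^2=0$ and $v_1v_2-\xi^2v_2v_1=0$ in $\BN(V_{3,1})$, and $\dim\BN^2(V_{3,1})=2$.

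Next I would prove $v_2^3=0$ by showing $\mathfrak S_3(v_2\otimes v_2\otimes v_2)=0$, where $\mathfrak S_3=(\mathrm{id}+c_1)(\mathrm{id}+c_2+c_2c_1)$ with $c_1=c\otimes\mathrm{id}$ and $c_2=\mathrm{id}\otimes c$. Applying $\mathrm{id}+c_2+c_2c_1$ to $v_2^{\otimes3}$ produces, after simplifying with $(\xi-1)^2=-\xi$, a linear combination of $v_2\otimes v_1\otimes v_1$, $v_1\otimes v_1\otimes v_2$ and $v_1\otimes v_2\otimes v_1$ in which the $v_2^{\otimes3}$-coefficient has already vanished; applying $\mathrm{id}+c_1$ to this combination, the three remaining coefficients cancel as well, so $\mathfrak S_3(v_2^{\otimes3})=0$ and hence $v_2^3=0$. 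Therefore $\BN(V_{3,1})$ is a graded quotient of $B:=\K\langle v_1,v_2\mid v_1^2=0,\ v_1v_2-\xi^2v_2v_1=0,\ v_2^3=0\rangle$. As an algebra $B$ is a quantum linear space, so by the Diamond Lemma it has basis $\{v_1^av_2^b\mid 0\le a\le1,\ 0\le b\le2\}$; thus $\dim B=6$, with Hilbert series $1+2t+2t^2+t^3$.

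Finally I would identify $\BN(V_{3,1})$ with $B$ using the Poincar\'e duality for finite-dimensional graded Hopf algebras in $\CYD$ (\cite[Proposition~3.2.2]{AG99}). Being a quotient of $B$, $\BN(V_{3,1})$ is finite-dimensional; let $N$ be its top degree. As $\BN(V_{3,1})$ is generated in degree one, $\BN^n(V_{3,1})\neq0$ for all $0\le n\le N$. If $N=2$, duality would force $\dim\BN^0(V_{3,1})=\dim\BN^2(V_{3,1})$, i.e. $1=2$, which is absurd; hence $N\ge3$, so $1\le\dim\BN^3(V_{3,1})\le\dim B^3=1$ gives $\dim\BN^3(V_{3,1})=1$, and $B^n=0$ for $n\ge4$ forces $N=3$. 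Then $\dim\BN(V_{3,1})=1+2+2+1=6=\dim B$, so the surjection $B\twoheadrightarrow\BN(V_{3,1})$ is an isomorphism, which is exactly the assertion. I expect the main obstacle to be the degree-$3$ computation: since the braiding is not of diagonal type, the cross term $(\xi^2-\xi)\,v_1\otimes v_1$ hidden inside $c(v_2\otimes v_2)$ must be carried through $\mathfrak S_3$, and verifying the total cancellation is the one genuinely laborious step; care is also needed to apply Poincar\'e duality only once finite-dimensionality has been secured from the quotient presentation.
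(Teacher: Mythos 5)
Your proposal is correct and follows essentially the same route as the paper: verify the three relations hold in $\BN(V_{3,1})$ by direct computation with the braiding (your quantum-symmetrizer kernels are exactly equivalent to the paper's check that $v_1^2$, $v_1v_2-\xi^2v_2v_1$ and $v_2^3$ are primitive), obtain a surjection from the six-dimensional quotient algebra, and close the argument with Poincar\'e duality. The only differences are cosmetic but mildly advantageous: you pin down $\dim B=6$ by the Diamond Lemma and apply duality to $\BN(V_{3,1})$ itself, which makes explicit the top-degree determination that the paper leaves implicit when it applies duality to $R$.
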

\begin{proof}
In this case, note that $\delta(v_1)=a^5\otimes v_1+(\xi^4-\xi^2)ba^4\otimes v_2$, \\$\delta(v_2)=a^2\otimes v_2+(1+\xi^4)ba\otimes v_1$, and the braiding of $V_{3,1}$ is given by
\begin{align*}
   c(\left[\begin{array}{ccc} v_1\\v_2\end{array}\right]\otimes\left[\begin{array}{ccc} v_1~v_2\end{array}\right])=
   \left[\begin{array}{ccc}
       -v_1\otimes v_1    & \xi^2v_2\otimes v_1+(\xi^2-1)v_1\otimes v_2\\
       v_1\otimes v_2            & \xi^2v_2\otimes v_2+(\xi^4+\xi^2)v_1\otimes v_1
         \end{array}\right].
\end{align*}
Using the braiding of $V_{3,1}$, we have
\begin{align*}
\Delta(v_1^2)&=v_1^2\otimes 1+v_1^2\otimes 1,\\
\Delta(v_1v_2)&=v_1v_2\otimes 1+\xi^2v_1\otimes v_2+\xi^2v_2\otimes v_1+1\otimes v_1v_2,\\
\Delta(v_2v_1)&=v_2v_1\otimes 1+ v_1\otimes v_2+v_2\otimes v_1+1\otimes v_2v_1,\\
\Delta(v_2^2)&=v_2^2\otimes 1+(1+\xi^2)v_2\otimes v_2+(\xi^2+\xi^4)v_1\otimes v_1+1\otimes v_2^2.
\end{align*}
which give us the relations $x^2=0$, and $v_1v_2-\xi^2v_2v_1=0$. And since
\begin{align*}
c(v_2\otimes v_2^2)&=a^2\cdot v_2^2\otimes v_2+(1+\xi^4)ba\cdot v_2^2\otimes v_1\\
&=(a^2\cdot v_2)(a^2\cdot v_2)\otimes v_2+\xi^{-1}\Lam^{-1}(ba\cdot v_2)(ba^4\cdot v_2)\otimes v_2
\\&\quad+(1+\xi^4)(a^5\cdot v_2)(ba^4\cdot v_2)\otimes v_1+(1+\xi^4)(ba^4\cdot v_2)(a^2\cdot v_2)\otimes v_1\\
&=\xi^4v_2^2\otimes v_2+\Lam^{-1}\xi v_1^2\otimes v_2+(1+\xi^4)v_2v_1\otimes v_1+(1+\xi^4)v_1v_2\otimes v_1\\
&=\xi^4v_2^2\otimes v_2+\Lam^{-1}\xi v_1^2\otimes v_2+v_2v_1\otimes v_1,
\end{align*}
we have
\begin{align*}
\Delta(v_2^3)&=(v_2\otimes 1+1\otimes v_2)(v_2^2\otimes 1+(1{+}\xi^2)v_2\otimes v_2+(\xi^2{+}\xi^4)v_1\otimes v_1+1\otimes v_2^2)\\
&=v_2^3\otimes 1+(1+\xi^2)v_2^2\otimes v_2+(\xi^2+\xi^4)v_2v_1\otimes v_1+v_2\otimes v_2^2+\xi^4v_2^2\otimes v_2
\\&\quad+\Lam^{-1}\xi v_1^2\otimes v_2+v_2v_1\otimes v_1+(\xi^2{+}\xi^4)v_2\otimes v_2^2+(1{+}\xi^2{+}2\xi^4)v_1\otimes v_1v_2\\&\quad+(\xi^2{+}\xi^4)v_1\otimes v_2v_1+1\otimes v_2^3\\
&=v_2^3\otimes 1+(1{+}\xi^2{+}\xi^4)v_2^2\otimes v_2+(1{+}\xi^2{+}\xi^4)v_2v_1\otimes v_1+\\&\quad(1+\xi^2+\xi^4)v_2\otimes v_2^2+
(1+\xi^2+\xi^4)(\xi^2+\xi^4)v_1\otimes v_2v_1+1\otimes v_2^3\\
&=v_2^3\otimes 1+1\otimes v_2^3.
\end{align*}
which gives us $x_2^3=0$.
Thus there exists a graded Hopf algebra epimorphism $\pi:R=T(V_{3,1})/I\twoheadrightarrow \BN(V_{3,1})$ in $\CYD$, where I is the ideal generated by the relations $v_1^2=0, v_1v_2-\xi^2v_2v_1=0, v_2^3=0$.

Note that $R^4=0, R^1=V_{3,1}, R^0=\K$. Then by the Poincar\'{e} duality, we have that $\dim\,R^3=\dim\,R^0=1$, and $\dim\,R^2=\dim\,R^1=2$. Since $\dim\,\BN^4(V_{3,1})=0$ and $\pi$ is injective in degree 0 and 1, we have $\dim\,R^3=\dim\,\BN^3(V_{3,1})$ and $\dim\,R^2=\dim\,\BN^2(V_{3,1})$ which implies $\dim\,R=\dim\,\BN(V_{3,1})$. Then the claim follows.
\end{proof}

\begin{pro}
$\BN(V_{3,5}):=\K\langle v_1, v_2\mid v_1^2=0, v_1v_2-\xi^4v_2v_1=0, v_2^3=0\rangle$. In particular, $\dim\,\BN(V_{3,5})=6$.
\end{pro}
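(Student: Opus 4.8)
My plan is to follow the proof of Proposition~\ref{proV31} almost verbatim; the two modules differ only by a few powers of~$\xi$ (for $V_{3,5}$ one has $\Lam_1=\xi^3=-1$ and $\Lam_2=\xi^5$). Specialising Proposition~\ref{probraidsimpletwo}(6) and the preceding description of the coaction, the braiding of $V_{3,5}=\K\{v_1,v_2\}$ is
\[
c(v_1\otimes v_1)=-v_1\otimes v_1,\qquad c(v_2\otimes v_1)=v_1\otimes v_2,
\]
\[
c(v_1\otimes v_2)=\xi^4 v_2\otimes v_1+(\xi^4-1)\,v_1\otimes v_2,\qquad c(v_2\otimes v_2)=\xi^4 v_2\otimes v_2+(1+\xi^2)\,v_1\otimes v_1,
\]
with comodule structure $\delta(v_1)=a\otimes v_1+(\xi^4-1)\,b\otimes v_2$ and $\delta(v_2)=a^4\otimes v_2+(1+\xi^2)\,ba^3\otimes v_1$.

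First I would exhibit the three relations inside $\BN(V_{3,5})$. Computing $\De(v_1^2)$, $\De(v_1v_2)$, $\De(v_2v_1)$, $\De(v_2^2)$ with the braided multiplication on $T(V_{3,5})\otimes T(V_{3,5})$, the mixed terms coming from $c(v_1\otimes v_1)$, $c(v_1\otimes v_2)$, $c(v_2\otimes v_1)$ cancel (here one uses $\xi^3=-1$, i.e.\ $\xi^4=-\xi$), so $v_1^2$ and $v_1v_2-\xi^4v_2v_1$ are primitive and hence vanish in $\BN(V_{3,5})=T(V_{3,5})/I(V_{3,5})$. Working modulo these two relations, one then evaluates $c(v_2\otimes v_2^2)$ from $c(v_2\otimes w)=(a^4\cdot w)\otimes v_2+(1+\xi^2)(ba^3\cdot w)\otimes v_1$, expanding $a^4\cdot v_2^2$ and $ba^3\cdot v_2^2$ with $\De_{\C}(a^4)$ and $\De_{\C}(ba^3)$ from Proposition~\ref{proStrucOfC} and the module-algebra axiom, and feeds the result into $\De(v_2)\,\De(v_2^2)$ to obtain $\De(v_2^3)=v_2^3\otimes 1+1\otimes v_2^3$, exactly as in Proposition~\ref{proV31}. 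This yields a graded Hopf algebra epimorphism in $\CYD$
\[
\pi\colon R:=\K\langle v_1,v_2\mid v_1^2=0,\ v_1v_2-\xi^4v_2v_1=0,\ v_2^3=0\rangle\twoheadrightarrow\BN(V_{3,5}),
\]
and as an algebra $R$ is a quantum linear space with basis $\{1,v_1,v_2,v_2v_1,v_2^2,v_2^2v_1\}$, so $\dim R=6$ and $R=R^0\oplus R^1\oplus R^2\oplus R^3$ with dimensions $1,2,2,1$.

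To finish I would show $\pi$ is an isomorphism. Since $\BN(V_{3,5})$ is a quotient of $R$ it is finite-dimensional, so \cite[Proposition\,3.2.2]{AG99} applies, and $R^4=0$ gives $\BN^4(V_{3,5})=0$. The map $\pi$ is bijective in degrees $0$ and $1$; it is bijective in degree $2$ because the degree-$2$ part of $I(V_{3,5})$ is the space of degree-$2$ primitives, namely $\ker(\mathrm{id}+c)$ on $V_{3,5}^{\otimes 2}$, which is two-dimensional (the braiding has eigenvalues $-1,-1,\xi^4,\xi^4$ there) and is spanned by $v_1^2$ and $v_1v_2-\xi^4v_2v_1$, so $\dim\BN^2(V_{3,5})=4-2=2=\dim R^2$. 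Poincar\'e duality then rules out top degree $\le 2$ (it would force $\dim\BN^2(V_{3,5})=\dim\BN^0(V_{3,5})=1$), hence the top degree is $3$ and $\dim\BN^3(V_{3,5})=\dim\BN^0(V_{3,5})=1=\dim R^3$; thus $\pi$ is bijective in each degree, $\dim\BN(V_{3,5})=6$, and the presentation follows.

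The one genuinely computational point, and where I expect to spend the effort, is the degree-$3$ coproduct $\De(v_2^3)$: it is the single step that really uses the $\C$-comodule structure of $V_{3,5}$, the already-established relations $v_1^2=0$ and $v_1v_2=\xi^4v_2v_1$, and the comultiplications $\De_{\C}(a^4)$, $\De_{\C}(ba^3)$ from Proposition~\ref{proStrucOfC}; everything else transcribes Proposition~\ref{proV31} with $\xi^2$ replaced by $\xi^4$, together with the Poincar\'e-duality bookkeeping above. As a consistency check, $V_{3,5}\cong V_{2,4}\As$, so once $\BN(V_{2,4})$ is determined, Proposition~\ref{proNicholsdual} yields $\BN(V_{3,5})\cong\BN(V_{2,4})\As$, in agreement with the displayed presentation.
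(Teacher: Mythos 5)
Your proposal is correct and follows essentially the same route as the paper: establish $v_1^2$, $v_1v_2-\xi^4v_2v_1$ and (using $c(v_2\otimes v_2^2)$) $v_2^3$ as primitives in $\BN(V_{3,5})$, then identify the resulting quotient of the quantum linear space with the Nichols algebra via the Poincar\'e-duality dimension count. The only difference is cosmetic: you spell out the duality/dimension bookkeeping (which the paper carries out only in the $V_{3,1}$ case and leaves implicit here), and your eigenvalue computation for $\ker(\mathrm{id}+c)$ in degree $2$ is a correct substitute for the paper's explicit coproduct formulas.
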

\begin{proof}
In this case, $\delta(v_1)=a\otimes v_1+(\xi^4-1)b\otimes v_2$, and $\delta(v_2)=a^4\otimes v_2+(1+\xi^2)ba^3\otimes v_1$. The braiding of $V_{3,5}$ is given by
\begin{align*}
   c(\left[\begin{array}{ccc} v_1\\v_2\end{array}\right]\otimes\left[\begin{array}{ccc} v_1~v_2\end{array}\right])=
   \left[\begin{array}{ccc}
       -v_1\otimes v_1    & \xi^4v_2\otimes v_1+(\xi^4-1)v_1\otimes v_2\\
        v_1\otimes v_2            & \xi^4v_2\otimes v_2+(1+\xi^2) v_1\otimes v_1
         \end{array}\right].
   \end{align*}
From the braiding of $V_{3,5}$, we have
\begin{align*}
\Delta(v_1^2)&=v_1^2\otimes 1+1\otimes v_1^2,\\
\Delta(v_1v_2)&=v_1v_2\otimes 1+ \xi^4v_1\otimes v_2 +\xi^4 v_2\otimes v_1+1\otimes v_1v_2,\\
\Delta(v_2v_1)&=v_2v_1\otimes 1+v_1\otimes v_2+v_2\otimes v_1+1\otimes v_2v_1,\\
\Delta(v_2^2)&=v_2^2\otimes 1+(1+\xi^4)v_2\otimes v_2+(1+\xi^2)v_1\otimes v_1+1\otimes v_2^2,
\end{align*}
which give us the relations $v_1^2=0$ and $v_1v_2-\xi^4v_2v_1=0$. And we also have
\begin{align*}
\Delta(v_2^3)&=(v_2\otimes 1+1\otimes v_2)(v_2^2\otimes 1+(1{+}\xi^4)v_2\otimes v_2+(1{+}\xi^2)v_1\otimes v_1+1\otimes v_2^2)\\
&=v_2^3\otimes 1+(1{+}\xi^4)v_2^2\otimes v_2+(1{+}\xi^2)v_2v_1\otimes v_1+v_2\otimes v_2^2+\xi^2v_2^2\otimes v_2\\&\quad
\Lam^{-1}v_1^2\otimes v_2+\xi^4v_2v_1\otimes v_1+(\xi^4+\xi^2)v_2\otimes v_2^2+\\&\quad(1+\xi^2)(1+\xi^4)v_1\otimes v_1v_2+(1+\xi^2)v_1\otimes v_2v_1+1\otimes v_2^3\\
&=v_2^3\otimes 1+(1+\xi^2+\xi^4)v_2^2\otimes v_2+(1+\xi^2+\xi^4)v_2v_1\otimes v_1\\&\quad+(1+\xi^2+\xi^4)v_2\otimes v_2^2+(1+\xi^2+\xi^4)(1+\xi^2)v_1\otimes v_2v_1\\
&=v_2^3\otimes 1+1\otimes v_2^3,
\end{align*}
since $c(v_2\otimes v_2^2)=\xi^2v_2^2\otimes v_2+\Lam^{-1}v_1^2\otimes v_2+\xi^4 v_2v_1\otimes v_1$. Thus we have the relation $v_2^3=0$.
\end{proof}

\begin{pro}
$\BN(V_{2,2}):=\K\langle v_1, v_2\mid v_1^2-\xi^2v_2^2=0, v_1v_2-v_2v_1=0, v_1^3=0\rangle$. In particular, $\dim\,\BN(V_{2,2})=6$.
\end{pro}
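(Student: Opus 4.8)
The plan is to mimic the argument of Proposition~\ref{proV31}. First I would record the Yetter--Drinfeld data of $V_{2,2}=\K\{v_1,v_2\}$: here $i=j=2$, so $\Lam_1=\xi^2$, and one has the module structure $a\cdot v_1=\xi^2v_1$, $a\cdot v_2=-v_2$, $b\cdot v_1=0$, $b\cdot v_2=v_1$, the coaction $\delta(v_1)=a^4\otimes v_1+(1-\xi^2)ba^3\otimes v_2$, $\delta(v_2)=a\otimes v_2-\xi^2b\otimes v_1$, and, specializing Proposition~\ref{probraidsimpletwo} to $j=2$, the braiding $c(v_1\otimes v_1)=\xi^2v_1\otimes v_1$, $c(v_1\otimes v_2)=v_2\otimes v_1+(\xi^2-1)v_1\otimes v_2$, $c(v_2\otimes v_1)=\xi^2v_1\otimes v_2$, $c(v_2\otimes v_2)=-v_2\otimes v_2-\xi^2v_1\otimes v_1$. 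All the cancellations below come from $\xi^3=-1$ and $1+\xi^2+\xi^4=0$ (hence $1+\xi^4=-\xi^2$, $1+\xi^2\neq0$).

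Second, I would extract the relations. Using the braiding one computes in $T(V_{2,2})$ that
$\Delta(v_1^2)=v_1^2\otimes1+(1+\xi^2)v_1\otimes v_1+1\otimes v_1^2$,
$\Delta(v_2^2)=v_2^2\otimes1-\xi^2v_1\otimes v_1+1\otimes v_2^2$,
$\Delta(v_1v_2)=v_1v_2\otimes1+\xi^2v_1\otimes v_2+v_2\otimes v_1+1\otimes v_1v_2$,
$\Delta(v_2v_1)=v_2v_1\otimes1+v_2\otimes v_1+\xi^2v_1\otimes v_2+1\otimes v_2v_1$,
so that $v_1^2-\xi^2v_2^2$ (using $1+\xi^2+\xi^4=0$) and $v_1v_2-v_2v_1$ are primitive. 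Then, using $\delta(v_1)$ together with $a\cdot v_1^2=\xi^4v_1^2$ and $b\cdot v_1^2=0$, one gets $\Delta(v_1^3)=v_1^3\otimes1+(1+\xi^2+\xi^4)(v_1^2\otimes v_1+v_1\otimes v_1^2)+1\otimes v_1^3=v_1^3\otimes1+1\otimes v_1^3$, so $v_1^3$ is primitive as well. Since $P(\BN(V_{2,2}))=V_{2,2}$ is concentrated in degree $1$, these three homogeneous primitives of degree $\ge2$ lie in $I(V_{2,2})$, hence there is a graded Hopf algebra epimorphism $\pi\colon R:=T(V_{2,2})/I\twoheadrightarrow\BN(V_{2,2})$ in $\CYD$, where $I$ is the ideal generated by $v_1^2-\xi^2v_2^2$, $v_1v_2-v_2v_1$ and $v_1^3$.

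Third---and this is the step I expect to be the real obstacle---I would prove that $\pi$ is injective. As an algebra $R$ is commutative with $v_1^3=0$ and $v_1^2=\xi^2v_2^2$; from these $v_1v_2^2=\xi^{-2}v_1^3=0$ and $v_2^4=\xi^{-2}v_1^2v_2^2=0$, so $R$ is spanned by $\{1,v_1,v_2,v_1v_2,v_2^2,v_2^3\}$ and $R^n=0$ for $n\ge4$; in particular $R$ is a finite-dimensional graded Hopf algebra in $\CYD$ with $R^0=\K$, $R^1=V_{2,2}$ and $\dim R^2\le2$. Next, a direct computation of $\ker(\mathrm{id}+c)\subset V_{2,2}\otimes V_{2,2}$ shows it equals $\K\{v_1\otimes v_2-v_2\otimes v_1,\ v_1\otimes v_1-\xi^2v_2\otimes v_2\}$---the condition $1+\xi^2\neq0$ guaranteeing that $\operatorname{Im}(\mathrm{id}+c)$ is $2$-dimensional---whence $\dim\BN^2(V_{2,2})=2$, and since $\pi^2$ maps onto $\BN^2(V_{2,2})$ we get $\dim R^2=2$ with $\pi$ bijective in degrees $0,1,2$. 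Finally I would apply Poincar\'e duality \cite[Proposition\,3.2.2]{AG99} to the finite-dimensional graded Hopf algebras $R$ and $\BN(V_{2,2})$: since $\dim R^2=\dim\BN^2(V_{2,2})=2>1=\dim R^0$, the top degree of each is at least $3$, and, both vanishing in degrees $\ge4$, it is exactly $3$, with $\dim R^3=\dim\BN^3(V_{2,2})=\dim R^0=1$. Therefore $\pi^3$ is a surjection of $1$-dimensional spaces, $\pi$ is an isomorphism, and $\dim\BN(V_{2,2})=1+2+2+1=6$. The delicate point is precisely this last step, i.e.\ certifying that the three displayed relations already generate $I(V_{2,2})$; it hinges on the degree-$2$ computation $\dim\ker(\mathrm{id}+c)=2$ and on Poincar\'e duality excluding $\dim\BN^3(V_{2,2})=0$.
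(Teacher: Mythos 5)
Your proposal is correct and follows essentially the same route as the paper: derive the candidate relations by checking primitivity of $v_1^2-\xi^2v_2^2$, $v_1v_2-v_2v_1$ and $v_1^3$ from the braiding, then identify $T(V_{2,2})/I$ with $\BN(V_{2,2})$ by a dimension count via Poincar\'e duality (all your coefficients agree with the paper's after noting $1+\xi^4=-\xi^2=-(1-\xi)^{-1}\cdot$\ldots, i.e.\ $1-\xi=-\xi^2$). The only difference is that you carry out the final counting step in full---including the explicit computation $\dim\ker(\mathrm{id}+c)=2$ and the exclusion of top degree $\le 2$---whereas the paper only does this for $V_{3,1}$ and leaves it implicit here; your version is the more complete one.
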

\begin{proof}
In this case, $\delta(v_1)=a^4\otimes v_1+(1-\xi^2)ba^3\otimes v_2$, and $\delta(v_2)=a\otimes v_2+(1+\xi^4)b\otimes v_1$. The braiding of $V_{2,2}$ is given by
\begin{align*}
   c(\left[\begin{array}{ccc} v_1\\v_2\end{array}\right]\otimes\left[\begin{array}{ccc} v_1~v_2\end{array}\right])=
   \left[\begin{array}{ccc}
       \xi^2v_1\otimes v_1    & v_2\otimes v_1+(\xi^2-1)v_1\otimes v_2\\
        \xi^2v_1\otimes v_2            & -v_2\otimes v_2+(1+\xi^4) v_1\otimes v_1
         \end{array}\right].
   \end{align*}
From the braiding of $V_{2,2}$, we have
\begin{align*}
\Delta(v_1^2)&=v_1^2\otimes 1+(1+\xi^2)v_1\otimes v_1+1\otimes v_1^2,\\
\Delta(v_1v_2)&=v_1v_2\otimes 1+ \xi^2v_1\otimes v_2 +v_2\otimes v_1+1\otimes v_1v_2,\\
\Delta(v_2v_1)&=v_2v_1\otimes 1+\xi^2v_1\otimes v_2+v_2\otimes v_1+1\otimes v_2v_1,\\
\Delta(v_2^2)&=v_2^2\otimes 1+(1-\xi)v_1\otimes v_1+1\otimes v_2^2,
\end{align*}
which give us the relations $v_1^2-\xi^2v_2^2=0$ and $v_1v_2-v_2v_1=0$. And we also have
\begin{align*}
\Delta(v_1^3)&=(v_1\otimes 1+1\otimes v_1)(v_1^2\otimes 1+(1+\xi^2)v_1\otimes v_1+1\otimes v_1^2)\\
&=v_1^3\otimes 1{+}(1{+}\xi^2)v_1^2\otimes v_1{+}v_1\otimes v_1^2{+}\xi^4 v_1^2\otimes v_1{+}(\xi^2{+}\xi^4)v_1\otimes v_1^2{+}1\otimes v_1^3\\
&=v_1^3+(1+\xi^2+\xi^4)v_1^2\otimes v_1+(1+\xi^2+\xi^4)v_1\otimes v_1^2+1\otimes v_1^3\\
&=v_1^3\otimes 1+1\otimes v_1^3,
\end{align*}
since $c(v_1\otimes v_1^2)=\xi^4v_1^2\otimes v_1$. Thus we have the relation $v_1^3=0$.
\end{proof}

\begin{pro}
$\BN(V_{2,4}):=\K\langle v_1, v_2\mid v_2^2+\xi v_1^2=0, v_1v_2-v_2v_1=0, v_1^3=0\rangle$. In particular, $\dim\,\BN(V_{2,4})=6$.
\end{pro}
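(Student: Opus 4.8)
The plan is to follow the template already used for $V_{3,1}$, $V_{3,5}$ and $V_{2,2}$. First I would record the Yetter-Drinfeld data of $V_{2,4}=\K\{v_1,v_2\}$: with $\Lam_1=\xi^2$ and $\Lam_2=\xi^4$, the coaction is $\delta(v_1)=a^2\otimes v_1+(\xi^4\Lam_1^4-\xi^2\Lam_1)\,ba\otimes v_2$ and $\delta(v_2)=a^5\otimes v_2+(\Lam_1^2+\xi^4\Lam_1^{-1})\,ba^4\otimes v_1$, and the braiding is the $j=4$ case of Proposition~\ref{probraidsimpletwo} specialized at $\Lam_1=\xi^2$. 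Using $\Delta(uv)=\Delta(u)\Delta(v)$ together with this braiding on $V_{2,4}\otimes V_{2,4}$, I would compute the coproducts of the four degree-two monomials $v_1^2,\ v_1v_2,\ v_2v_1,\ v_2^2$. As in the $V_{2,2}$ case, this should exhibit $v_2^2+\xi v_1^2$ and $v_1v_2-v_2v_1$ as primitive elements (the cross terms collapsing thanks to $1+\xi^2+\xi^4=0$), hence as elements of the Nichols ideal $I(V_{2,4})$.

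Next I would treat the cubic relation. One first evaluates $c(v_1\otimes v_1^2)$; since $b\cdot v_1=0$ annihilates every $ba^k$-component appearing in the relevant coactions and in $\Delta_{\C}(a^k)$, this collapses to a scalar multiple of $v_1^2\otimes v_1$. Expanding $\Delta(v_1^3)=\Delta(v_1)\,\Delta(v_1^2)$ then shows that its $(2,1)$- and $(1,2)$-homogeneous components carry coefficients that are multiples of $1+\xi^2+\xi^4=0$, so $v_1^3$ is primitive and also lies in $I(V_{2,4})$. Consequently there is a surjection of $\mathds{N}$-graded Hopf algebras in $\CYD$
\[
\pi\colon R:=T(V_{2,4})\big/\big\langle\, v_2^2+\xi v_1^2,\ v_1v_2-v_2v_1,\ v_1^3\,\big\rangle\ \twoheadrightarrow\ \BN(V_{2,4}).
\]

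To pin down the dimension I would analyze $R$ directly. The three relations let one commute $v_2$ past $v_1$, replace $v_2^2$ by $-\xi v_1^2$, and kill $v_1^3$, so $R$ is spanned by $\{1,v_1,v_2,v_1^2,v_1v_2,v_1^2v_2\}$; in particular $R^{\ge4}=0$, $R^0=\K$, $R^1=V_{2,4}$, and the degree-two part of the defining ideal is exactly the span of its two quadratic generators, so $\dim R^2=2$. Since the three generators of the ideal are primitive, the ideal is a Hopf ideal and $R$ is a finite-dimensional graded Hopf algebra in $\CYD$; Poincar\'{e} duality (Proposition~3.2.2 of \cite{AG99}) applied to $R$ then forces its top degree to be $3$ with $\dim R^3=\dim R^0=1$, whence $\dim R=6$. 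Finally, $\pi$ is an isomorphism in degrees $0$ and $1$ because $\BN(V_{2,4})^{1}=V_{2,4}$, so $\BN(V_{2,4})$ is a finite-dimensional graded quotient of $R$ with the same bottom two graded pieces and top degree at most $3$; Poincar\'{e} duality applied to $\BN(V_{2,4})$, together with surjectivity of $\pi$ in each degree, forces $\dim\BN^{2}(V_{2,4})=2$ and $\dim\BN^{3}(V_{2,4})=1$, so $\pi$ is an isomorphism and $\dim\BN(V_{2,4})=6$. Equivalently, and more directly, one reads off from the degree-two and degree-three coproduct formulas that $P(R)=R^1$, whence $R=\BN(V_{2,4})$ by the characterization of Nichols algebras recalled from \cite{AS02}.

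The only genuine obstacle is the bookkeeping with sixth roots of unity: correctly specializing the $j=4$ braiding of Proposition~\ref{probraidsimpletwo} at $\Lam_1=\xi^2$ and tracking the coefficients through $\Delta(v_1^3)$ so that the $(2,1)$- and $(1,2)$-terms genuinely cancel via $1+\xi^2+\xi^4=0$. Once the braiding is in hand, the rest of the argument is formally identical to the proofs of Proposition~\ref{proV31} and the two propositions following it.
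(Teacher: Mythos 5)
Your proposal is correct and follows essentially the same route as the paper: specialize the $j=4$ coaction and braiding at $\Lam_1=\xi^2$, verify that $v_2^2+\xi v_1^2$ and $v_1v_2-v_2v_1$ are primitive, use $c(v_1\otimes v_1^2)=\xi^2 v_1^2\otimes v_1$ and $1+\xi^2+\xi^4=0$ to see that $v_1^3$ is primitive, and then run the Poincar\'e-duality comparison between $R=T(V_{2,4})/I$ and $\BN(V_{2,4})$ exactly as in the paper's treatment of $V_{3,1}$. The coefficients you cite all check out against Proposition~\ref{probraidsimpletwo}, so no changes are needed.
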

\begin{proof}
In this case, $\delta(v_1)=a^2\otimes v_1+(1-\xi^4)ba\otimes v_2$, and $\delta(v_2)=a^5\otimes v_2+(\xi^4+\xi^2)ba^4\otimes v_1$. The braiding of $V_{2,4}$ is given by
\begin{align*}
   c(\left[\begin{array}{ccc} v_1\\v_2\end{array}\right]\otimes\left[\begin{array}{ccc} v_1~v_2\end{array}\right])=
   \left[\begin{array}{ccc}
       \xi^4v_1\otimes v_1    &  v_2\otimes v_1+(\xi^4-1)v_1\otimes v_2\\
        \xi^4v_1\otimes v_2            & -v_2\otimes v_2+(\xi^4+\xi^2) v_1\otimes v_1
         \end{array}\right].
   \end{align*}
From the braiding of $V_{2,4}$, we have
\begin{align*}
\Delta(v_1^2)&=v_1^2\otimes 1+(1+\xi^4)v_1\otimes v_1+1\otimes v_1^2,\\
\Delta(v_1v_2)&=v_1v_2\otimes 1+ \xi^4v_1\otimes v_2 +v_2\otimes v_1+1\otimes v_1v_2,\\
\Delta(v_2v_1)&=v_2v_1\otimes 1+\xi^4v_1\otimes v_2+v_2\otimes v_1+1\otimes v_2v_1,\\
\Delta(v_2^2)&=v_2^2\otimes 1+(\xi^2+\xi^4)v_1\otimes v_1+1\otimes v_2^2,
\end{align*}
which give us the relations $v_2^2+\xi v_1^2=0$ and $v_1v_2-v_2v_1=0$. And we also have
\begin{align*}
\Delta(v_1^3)&=(v_1\otimes 1+1\otimes v_1)(v_1^2\otimes 1+(1+\xi^4)v_1\otimes v_1+1\otimes v_1^2)\\
&=v_1^3\otimes 1{+}(1{+}\xi^4)v_1^2\otimes v_1{+}v_1\otimes v_1^2{+}\xi^2 v_1^2\otimes v_1{+}(\xi^2{+}\xi^4)v_1\otimes v_1^2{+}1\otimes v_1^3\\
&=v_1^3+(1+\xi^2+\xi^4)v_1^2\otimes v_1+(1+\xi^2+\xi^4)v_1\otimes v_1^2+1\otimes v_1^3\\
&=v_1^3\otimes 1+1\otimes v_1^3,
\end{align*}
since $c(v_1\otimes v_1^2)=\xi^2v_1^2\otimes v_1$. Thus we have the relation $v_1^3=0$.
\end{proof}
\begin{rmk}\label{rmkQues}
Nichols algebras play an important role in the classification of finite-dimensional Hopf algebras, especially of pointed Hopf algebras. However, it is extremely difficult to determine when a Nichols algebra has finite dimension or finite Gelfand-Kirillov dimension or to present by generators and relations. For the Nichols algebras of diagonal type, Heckenberger showed that there exists a close connection to semi-simple Lie algebras, namely he introduced the Weyl groupoid, Weyl equivalence and generalized root system for Nichols algebra. With the help of these concepts he classified all braided vector spaces of diagonal type such that the associated Nichols algebras have the so-called generalized finite root systems $\cite{H09}$. Based on the work of Heckenberger, I.~E.~Angiono determined their explicit relations by generators (\cite{An09}, \cite{An13}, \cite{An15}). But there is no general method, for the Nichols algebras which are not of diagonal type, especially for the Nichols algebras over simple modules. In order to show that a Nichols algebra $\BN(V)$ over a simple module $V$ is infinite-dimensional, we usually try to find a braided subspace or a braided subquotient space $W$ such that $\dim\BN(W)=\infty$ since $\BN(V)$ as an algebra and a coalgebra is completely determined by its braiding. For the category $\CYD$, we have shown that Nichols algebras over non-simple indecomposable modules are infinite-dimensional. Thus, for any object $V$ in $\CYD$, if $\dim \BN(V)<\infty$, the $V$ must be semisimple. Moreover, we show that $\dim\BN(V)=2$ if $V$ is isomorphic to $\K_{\chi^{k}}$ with $k\in\{1,3,5\}$, and $\dim\BN(V)=6$ if $V$ is isomorphic to $V_{3,1}$, $V_{3,5}$, $V_{2,2}$ or $V_{2,4}$.
However, there remain some questions to be solved.
\begin{question}\label{question1}
Determine the braided vector space $W$ such that $\dim\BN(W)<\infty$ where $W$ is isomorphic either to $V_{1,1}$, $V_{4,2}$, $V_{1,4}$, $V_{4,5}$,  $V_{4,4}$, $V_{1,5}$, $V_{4,1}$, $V_{1,2}$ and give an efficient defining set of relations of the Nichols algebras.
\end{question}
Note that after a direct calculation by using the braiding of any $2$-dimensional simple object $W$ in Question $\ref{question1}$, we cannot find a braided subspace $U$ of $W$ such that $\dim\BN(U)=\infty$. Moreover,  Nichols algebra $\BN(W)$ over $W$ as an algebra cannot be isomorphic to some quantum linear space, and the dimension must be bigger than $9$ since one of the relations $v_1^5=0$, $v_1^4=0$, and $v_1^3=0$ must hold in $\BN(W)$.
Thus they cannot produce new Hopf algebras of dimension $72$.

\begin{question}
Determine the braided vector space $V$ such that $\dim\BN(V)<\infty$ where $V$ is a semisimple module, i.e., a direct sum of some simple modules $V_i$ such that $\dim\BN(V_i)<\infty$, and give an efficient defining set of relations of the Nichols algebra $\dim\BN(V)$ $($see \cite{AHS10}\,$)$.
\end{question}
\end{rmk}

\section{Hopf algebras over $\C$}\label{secHopfalgebra}
In this section, we determine all finite-dimensional Hopf algebras
$A$ such that $A_{[0]}\cong \C$ and the corresponding infinitesimal
braiding $V$ is isomorphic either to  $\K_{\chi^{k}}$ with
$k\in\{1,3,5\}$, $V_{3,1}$, $V_{3,5}$, $V_{2,2}$, or $V_{2,4}$. And
we show that there do not exist non-trivial deformations for the
bosonizations of the Nichols algebras associated to the simple
modules as above. As a byproduct, there are $3$ Hopf algebras of dimension $24$
without the Chevalley property given by
$\bigwedge\K_{\chi^{k}}\sharp \C$ for $k\in\{1,3,5\}$. And there are
$4$ Hopf algebras of dimension $72$ without the Chevalley property
given by $\BN(V_{3,1})\sharp\C$, $\BN(V_{3,5})\sharp\C$,
$\BN(V_{2,2})\sharp\C$, $\BN(V_{2,4})\sharp\C$.

First, we show that such Hopf algebra mentioned above is generated in degree one with respect to the standard filtration, i.e., $gr\,A\cong \BN(V)\sharp \C$.
\begin{lem}
Let $A$ be a Hopf algebra of dimension $24$ or $72$ such that $A_{[0]}\cong \C$ and the corresponding infinitesimal braiding $V$ is either the simple modules $\K_{\chi^{k}}$ with $k\in\{1,3,5\}$, or $V_{3,1}$, $V_{3,5}$, $V_{2,2}$, and $V_{2,4}$. Then $gr\,A\cong \BN(V)\sharp \C$. That is, $A$ is generated by the elements of degree one with respect to the standard filtration.
\end{lem}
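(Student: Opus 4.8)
The plan is to identify $\operatorname{gr}A$ through the Andruskiewitsch--Cuadra structure theorem and then finish by a dimension count, using that the dimensions of the Nichols algebras in play have already been computed in Section~\ref{secNicholsalg}.

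Since $\C$ is a Hopf algebra, the hypothesis $A_{[0]}\cong\C$ forces $A_{[0]}$ to be a Hopf subalgebra of $A$; in particular its underlying space is stable under $S_A$, and $S_A$ is bijective as $\dim A<\infty$. Hence the standard filtration $\{A_{[n]}\}_{n\geq 0}$ is a Hopf algebra filtration, and by Theorem~1.3 of \cite{AC13} the associated graded Hopf algebra is $\operatorname{gr}A\cong R\sharp\C$, where $R=(\operatorname{gr}A)^{\operatorname{co}\pi}$ is a connected $\mathds{N}$-graded Hopf algebra in $\CYD$ with $R(0)=\K$ and $R(1)$ the infinitesimal braiding; by hypothesis $R(1)\cong V$ in $\CYD$. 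Comparing dimensions, $\dim R=\dim(\operatorname{gr}A)/\dim\C=\dim A/12$, which equals $2$ if $\dim A=24$ and $6$ if $\dim A=72$. On the other hand Lemma~\ref{lemNicholsgeneratedbyone} gives $\dim\BN(\K_{\chi^{k}})=2$ for $k\in\{1,3,5\}$, while Proposition~\ref{proV31} and the propositions following it in Section~\ref{secNicholsalg} give $\dim\BN(V_{i,j})=6$ for $V_{i,j}\in\{V_{3,1},V_{3,5},V_{2,2},V_{2,4}\}$; so in all cases $\dim\BN(V)=\dim R$.

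Next I would compare $R$ with $\BN(V)$ via the subalgebra $S\subseteq R$ generated by $R(1)$. In a connected graded coalgebra every element of degree one is primitive, so $R(1)\subseteq\Pp(R)$ and $S$ is a graded Hopf subalgebra of $R$ in $\CYD$, generated in degree one, with $S(0)=\K$ and $S(1)=R(1)\cong V$. The algebra epimorphism $f\colon T(V)\twoheadrightarrow S$ extending the identity of $V$ is a morphism of Hopf algebras in $\CYD$ (both $(f\otimes f)\circ\Delta_{T(V)}$ and $\Delta_S\circ f$ are algebra maps $T(V)\to S\otimes S$ agreeing on the primitive generators $V$, hence coincide), so $J:=\ker f$ is an $\mathds{N}$-graded Hopf ideal of $T(V)$ with $J\cap V=0$. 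By the maximality property defining $I(V)$ we get $J\subseteq I(V)$, hence a surjection $S\twoheadrightarrow\BN(V)$ and $\dim S\geq\dim\BN(V)$.

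The dimension count then closes everything: $\dim\BN(V)\leq\dim S\leq\dim R=\dim\BN(V)$, so all of these are equalities. Therefore $S=R$, i.e.\ $R$ is generated in degree one, and the surjection $R=S\twoheadrightarrow\BN(V)$ between objects of equal finite dimension is an isomorphism; thus $R\cong\BN(V)$ and $\operatorname{gr}A\cong\BN(V)\sharp\C$. Finally, $R$ being generated by $R(1)$ means $\operatorname{gr}A\cong R\sharp\C$ is generated by its components of degree $0$ and $1$, which is equivalent to $A$ being generated by $A_{[1]}$. I expect no genuinely hard step here: the substance is the four dimensions $\dim\BN(V)$, already available, and the only point demanding care is the verification that the Andruskiewitsch--Cuadra theorem applies, which is automatic because $A_{[0]}\cong\C$ is itself a Hopf algebra.
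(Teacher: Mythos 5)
Your argument is correct in substance but follows a genuinely different route from the paper's. You work directly with $R=(\operatorname{gr}A)^{\operatorname{co}\pi}$ and close the argument by a dimension count: the subalgebra $S\subseteq R$ generated by $R(1)=V$ surjects onto $\BN(V)$ by the universal property of $I(V)$, and $\dim\BN(V)\le\dim S\le\dim R=\dim A/12=\dim\BN(V)$ forces $S=R\cong\BN(V)$. The paper instead passes to the graded dual $S=R^{\ast}$, invokes the duality principle of \cite[Lemma 2.4]{AS02} (using $\Pp(R)=R(1)$) to get that $S$ is generated in degree one, and then verifies case by case that the defining relations of the relevant Nichols algebra are primitive with braiding $c(r\otimes r)=r\otimes r$, hence must vanish in any finite-dimensional braided Hopf algebra. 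Your route is shorter and avoids all relation-checking, but it consumes the dimension hypothesis in an essential way; the paper's argument never uses $\dim A\in\{24,72\}$ and therefore proves the stronger statement that \emph{any} finite-dimensional $A$ with one of these infinitesimal braidings satisfies $\operatorname{gr}A\cong\BN(V)\sharp\C$ --- which is the form actually invoked in Lemmas \ref{lemOnedimNichDeforma}--\ref{lemTwodimNichDeforma2}, where no dimension bound is assumed a priori. Two small points to tidy: (i) your assertion ``in all cases $\dim\BN(V)=\dim R$'' tacitly pairs $\dim A=24$ with the one-dimensional braidings and $\dim A=72$ with the two-dimensional ones; the literal statement admits mixed combinations (e.g.\ $\dim A=72$ with $V=\K_{\chi^{1}}$), for which your count does not close --- you should either note that such combinations are vacuous or state that you adopt the intended pairing; (ii) it is worth recording explicitly that $S$ is a Yetter--Drinfeld submodule of $R$ (being generated by the submodule $V$ under multiplication), so that $S\otimes S$ is stable under the braiding and your comparison of the two algebra maps $T(V)\to S\otimes S$ is legitimate.
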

\begin{proof}
Recall that $H=gr\,A=\oplus_{i\geq 0}A_{[i]}/A_{[i+1]}=R\sharp \C$, where $A_{[0]}\cong\C$ and $R=H^{coA_{[0]}}$. In order to show that $gr\,A\cong \BN(V)\sharp \C$, i.e., $R\cong \BN(V)$, let $S=R\As$ be the graded dual of $R$ and by the duality principle in $\cite[Lemma\,2.4]{AS02}$, $S$ is generated by $S(1)$ since $\Pp(R)=R(1)$. Thus there exists a surjective morphism $S\twoheadrightarrow \BN(W)$ where $W=S(1)$. Thus $S$ is a Nichols algebra if $\Pp(S)=S(1)$, which implies $R$ is a Nichols algebra, i.e., $R=\BN(V)$. To show that $\Pp(S)=S(1)$, it is enough to prove that the relations of $\BN(V)$ also hold in $S$. If $V=\K_{\chi^{k}}=\K[v]/(v^2)$ with some $k\in Z_6$, then $W=\K_{\chi^{l}}=\K[v]/(v^2)$ with some $l\in Z_6$. And if $V$ is isomorphic either to $V_{3,1}$, $V_{3,5}$, $V_{2,2}$ or $V_{2,4}$, then $W$ must be isomorphic either to $V_{3,1}$, $V_{3,5}$, $V_{2,2}$ or $V_{2,4}$, since we have known the dimensions of Nichols algebras over other two-simple modules are bigger than $9$ from the discussion in Remark $\ref{rmkQues}$.

Assume $W=\K_{\chi^{k}}=\K[v]/(v^2)$ with $k\in\{1,3,5\}$ and then $\BN(W)=\bigwedge \K_{\chi^{k}}$ for $k\in\{1,3,5\}$. In such a case, if $v^2\in S$, then $v^2$ is a primitive element and $c(v^2\otimes v^2)=v^2\otimes v^2$. Since as the graded dual of $R, S$ must be finite-dimensional, thus $v^2=0$. Then the claim follows.

Assume that $W=V_{3,1}$, then by Proposition $\ref{proV31}$, we know that as an algebra $\BN(V_{3,1}):=\K\langle v_1, v_2|v_1^2=0, v_1v_2-\xi^2v_2v_1=0, v_2^3=0\rangle$ and the relations of $\BN(V_{3,1})$ are all primitive elements. Thus we need to show that $c(r\otimes r)=r\otimes r$ for $r=v_1^2$, $v_1v_2-\xi^2v_2v_1$ and $v_2^3$. Since
\begin{align*}
\delta(v_1)=a^5\otimes v_1+(\xi^4-\xi^2)ba^4\otimes v_2, \quad \delta(v_2)=a^2\otimes v_2+(1+\xi^4)ba\otimes v_1,
\end{align*}
after a direct computation, we have that
\begin{align*}
\delta(v_1^2)=a^4\otimes v_1^2+(\xi^5-1)ba^3\otimes(v_1v_2-\xi^2v_2v_1),\\
\delta(v_1v_2-\xi^2v_2v_1)=a\otimes (v_1v_2-\xi^2v_2v_1)+\xi^5b\otimes v_1^2,\\
\delta(v_2^3)=1\otimes v_2^3+ba^5\otimes(\xi v_2^2v_1-v_2v_1v_2+\xi^5v_1v_2^2).
\end{align*}
Thus by the braiding of $\CYD$, we have
\begin{align*}
c(v_1^2\otimes v_1^2)=v_1^2\otimes v_1^2,\quad c(v_2^3\otimes v_2^3)=v_2^3\otimes v_2^3,\\
c((v_1v_2-\xi^2v_2v_1)\otimes (v_1v_2-\xi^2v_2v_1))=(v_1v_2-\xi^2v_2v_1)\otimes (v_1v_2-\xi^2v_2v_1).
\end{align*}
Thus the claim follows.
Similarly, the claim follows when $W=V_{3,5}$, $V_{2,2}$, and $V_{2,4}$.
\end{proof}

Next, we shall show that there do not exist non-trivial deformations for the bosonizations of the Nichols algebra associated either to $\K_{\chi^{k}}$ with $k\in\{1,3,5\}$, $V_{3,1}$, $V_{3,5}$, $V_{2,2}$ or $V_{2,4}$.
\begin{lem}\label{lemOnedimNichDeforma}
Let $A$ be a finite-dimensional Hopf algebra over $\C$ such that its infinitesimal braiding $V$ is isomorphic to $\K_{\chi^{k}}$ with $k\in\{1,3,5\}$. Then $A\cong \bigwedge \K_{\chi^{k}}\sharp \C$
\end{lem}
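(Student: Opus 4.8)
By the previous lemma, $gr\,A\cong\BN(V)\sharp\C$, and since $V\cong\K_{\chi^{k}}$ with $k\in\{1,3,5\}$ we have $\BN(V)=\bigwedge\K_{\chi^{k}}$ by Lemma \ref{lemNicholsgeneratedbyone}; hence $\dim A=24$ and $A$ is generated as an algebra by $\C=A_{[0]}$ together with an element $v$ whose class spans the one-dimensional degree-one component $V$ of $gr\,A$, and $v\notin\C$. Because $k$ is odd, $a^{3k}=a^{3}$, so in $gr\,A$ the element $v\sharp 1$ is an $(a^{3},1)$-skew-primitive; since the standard filtration is a coalgebra filtration, $\De(v)=v\otimes 1+a^{3}\otimes v+w$ for some reduced $w\in\C^{+}\otimes\C^{+}$, and a routine cocycle computation (as in \cite{GG16}) shows that $w$ is a coboundary, so after replacing $v$ by $v-c$ for a suitable $c\in\C$ we may assume $\De(v)=v\otimes 1+a^{3}\otimes v$ in $A$. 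By the Nichols-Zoller theorem $A$ is free of rank $2$ as a left $\C$-module, so $\{1,v\}$ is a $\C$-basis of $A$.

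The core step is to show that the commutation relations of $\C$ with $v$ are not deformed, that is, $av=\xi^{k}va$ and $bv=\xi^{k}vb$ hold in $A$. Using the $\C$-basis $\{1,v\}$ and the fact that these relations hold in $gr\,A$, one may write $va=\xi^{-k}av+c_{a}$ and $vb=\xi^{-k}bv+c_{b}$ with $c_{a},c_{b}\in\C$. Comparing $\De(va)$ with $\De(v)\De(a)$ and $\De(vb)$ with $\De(v)\De(b)$, using $ba^{3}=-a^{3}b$ (because $\xi^{3}=-1$) together with the explicit coproduct of $\C$ from Proposition \ref{proStrucOfC}, one finds that $c_{a}$ and $c_{b}$ must lie in a small, explicitly determined space of skew-primitive-type elements of $\C$; imposing in addition the relations $a^{6}=1$, $b^{2}=0$, $ba=\xi ab$ of $\C$ then forces $c_{a}=c_{b}=0$. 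I expect this to be the main obstacle, since it is the only place where a genuine deformation could appear and it requires the detailed coalgebra structure of $\C$.

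Granting this, and using that $\De$ is an algebra homomorphism, compute in $A$
\[
\De(v^{2})=(v\otimes 1+a^{3}\otimes v)^{2}=v^{2}\otimes 1+(va^{3}+a^{3}v)\otimes v+1\otimes v^{2}.
\]
Since $k$ is odd, $a^{3}v=\xi^{3k}va^{3}=-va^{3}$, so the middle term vanishes and $v^{2}$ is a primitive element of $A$; as $A$ is a finite-dimensional Hopf algebra over a field of characteristic zero, $P(A)=0$, whence $v^{2}=0$. Therefore $A$ is generated by $\C$ and $v$ subject to $\De(v)=v\otimes 1+a^{3}\otimes v$, $av=\xi^{k}va$, $bv=\xi^{k}vb$ and $v^{2}=0$, which are precisely the defining relations of $\bigwedge\K_{\chi^{k}}\sharp\C$. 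Hence there is a surjective Hopf algebra morphism $\bigwedge\K_{\chi^{k}}\sharp\C\twoheadrightarrow A$, and since both sides have dimension $24$ it is an isomorphism.
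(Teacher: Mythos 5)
Your proposal follows essentially the same route as the paper: the whole argument rests on the observation that $\Delta_A(v)=v\otimes 1+a^3\otimes v$ forces $\Delta_A(v^2)=v^2\otimes 1+(va^3+a^3v)\otimes v+1\otimes v^2=v^2\otimes 1+1\otimes v^2$, so that $v^2$ is primitive and hence zero in a finite-dimensional Hopf algebra over a field of characteristic zero, after which a dimension count finishes the proof. Your computation of $\Delta(v^2)$ and the use of $a^{3k}=a^3$ for odd $k$ are exactly what the paper does. The one point worth flagging is that the step you yourself single out as the ``core step'' --- showing that $\Delta_A(v)$ has no tail $w\in\C^+\otimes\C^+$ and that the cross-relations $av=\xi^kva$, $bv=\xi^kvb$ are undeformed (which is what licenses $a^3v=-va^3$ inside $A$, not just in $\mathrm{gr}\,A$) --- is left as an asserted ``routine cocycle computation'' rather than carried out. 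The paper does not perform this verification either; it simply writes $\Delta_A(v)=v\otimes 1+a^3\otimes v$ and uses the commutation implicitly, so your write-up is, if anything, more explicit about where a genuine deformation could hide. But as submitted your argument is incomplete at precisely that point: to close it you would need to write $va=\xi^{-k}av+c_a$, $vb=\xi^{-k}bv+c_b$ with $c_a,c_b\in\C$, expand $\Delta(va-\xi^{-k}av)$ and $\Delta(vb-\xi^{-k}bv)$ against the explicit coproducts of Proposition \ref{proStrucOfC}, and check that the resulting constraints (together with $a^6=1$, $b^2=0$, $ba=\xi ab$) force $c_a=c_b=0$ --- note that $\C$ does contain elements such as $ba^3$ with $\Delta(ba^3)=ba^3\otimes a+a^4\otimes ba^3$, so the vanishing is not automatic from degree considerations alone and genuinely requires the computation.
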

\begin{proof}
Note that $\text{gr}\; A\cong \BN(V)\sharp \C$, where $V$ is isomorphic to $\K_{\chi^{k}}$ with $k\in\{1,3,5\}$. If $v^2\in S$, then $v^2$ is a primitive element and $c(v^2\otimes v^2)=v^2\otimes v^2$. Since as the graded dual of $R, S$ must be finite-dimensional, thus $v^2=0$. Then the claim follows. We prove that the relations also hold in $H$. Indeed, let $\bigwedge \K_{\chi^{k}}=\K[v]/(v^2)$, and $\delta(v)=a^3\otimes v$. Thus
\begin{align*}
\Delta_A(v)&=v\otimes 1+a^3\otimes v,\\
\Delta_A(v^2)&=v^2\otimes 1+(a^3\cdot v+v)\otimes v+1\otimes v^2=v^2\otimes 1+1\otimes v^2.
\end{align*}
But since $A$ is a finite-dimensional Hopf algebra so that $A$ cannot contain any primitive element. Therefore the relation $v^2=0$ must hold in $A$.
\end{proof}

\begin{lem}\label{lemTwodimNichDeforma1}
Let $A$ be a finite-dimensional Hopf algebra over $\C$ such that the infinitesimal braiding $V$ is isomorphic either to $V_{3,1}$ or $V_{3,5}$. Then $A\cong \BN(V)\sharp \C$.
\end{lem}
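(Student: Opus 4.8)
The plan is to show that the lifting is trivial, i.e.\ that the defining relations of $\BN(V)$ already hold in $A$. By the generation-in-degree-one lemma proved above, $gr\,A\cong\BN(V)\sharp\C$; in particular $A$ is generated as an algebra by the Hopf subalgebra $\C$ together with a Yetter--Drinfeld submodule of $A_{[1]}$ isomorphic to $V$, and $\dim A=\dim\BN(V)\cdot\dim\C=6\cdot 12=72$. First I would fix elements $v_1,v_2\in A_{[1]}$ lifting the generators of $\BN(V)$ used in Proposition~\ref{proV31} (resp.\ in the corresponding proposition for $V_{3,5}$), normalized so that the $\C$-action and the $\C$-coaction on $v_1,v_2$, as well as the comultiplication $\Delta_A(v_i)$ up to a term in $\C\otimes\C$, agree with the expressions valid in $\BN(V)\sharp\C$; for $V_{3,1}$ this means $a\cdot v_1=\xi^{3}v_1$, $a\cdot v_2=\xi^{4}v_2$, $b\cdot v_1=0$, $b\cdot v_2=v_1$, together with $\delta(v_1)=a^{5}\otimes v_1+(\xi^{4}-\xi^{2})ba^{4}\otimes v_2$ and $\delta(v_2)=a^{2}\otimes v_2+(1+\xi^{4})ba\otimes v_1$. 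It then suffices to prove that the three elements $\Phi_1:=v_1^{2}$, $\Phi_2:=v_1v_2-\xi^{2}v_2v_1$, $\Phi_3:=v_2^{3}$ of $A$ all vanish (with $\xi^{2}$ replaced by $\xi^{4}$ in the case $V_{3,5}$): once this is known, the canonical surjection $\BN(V)\sharp\C\twoheadrightarrow A$ is an isomorphism since both sides have dimension $72$.

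The next step is to kill the two quadratic relations. As $\Phi_1$ and $\Phi_2$ map to zero in $gr\,A$ in filtration degree two, both lie in $A_{[1]}$. Computing $\Delta_A(\Phi_1)$ and $\Delta_A(\Phi_2)$ from the coproducts of $v_1,v_2$ reproduces, up to a term in $\C\otimes\C$, the identities $\Delta_A(\Phi_1)=\Phi_1\otimes 1+a^{4}\otimes\Phi_1+(\xi^{5}-1)ba^{3}\otimes\Phi_2$ and $\Delta_A(\Phi_2)=\Phi_2\otimes 1+a\otimes\Phi_2+\xi^{5}b\otimes\Phi_1$ --- precisely the coactions $\delta(v_1^{2})$ and $\delta(v_1v_2-\xi^{2}v_2v_1)$ recorded in the proof of the generation lemma. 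Consequently $M:=\mathrm{span}_{\K}\{\Phi_1,\Phi_2\}+\C$ is a subcoalgebra of $A$, and modulo $\C$ the pair $(\Phi_1,\Phi_2)$ is a system of $\C$-skew-primitive elements attached to the grouplikes $a^{4}$ and $a$. A coefficient comparison with the explicit comultiplication of $\C$ (matching the summands of the form $(\,\cdot\,)\otimes 1$ and the grouplike summands, and using $\epsilon(\Phi_i)=0$) shows that $A_{[1]}/\C$ contains no such nonzero pair, whence $\Phi_1,\Phi_2\in\C$; repeating the comparison inside $\C\otimes\C$, now also using the self-braiding identities $c(\Phi_i\otimes\Phi_i)=\Phi_i\otimes\Phi_i$ from the same proof, forces $\Phi_1=\Phi_2=0$.

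With $\Phi_1=\Phi_2=0$ established, the relation $v_1v_2=\xi^{2}v_2v_1$ makes the bracket $\xi v_2^{2}v_1-v_2v_1v_2+\xi^{5}v_1v_2^{2}$ vanish in $A$, so the formula $\delta(v_2^{3})=1\otimes v_2^{3}+ba^{5}\otimes(\xi v_2^{2}v_1-v_2v_1v_2+\xi^{5}v_1v_2^{2})$ collapses to $\delta(v_2^{3})=1\otimes v_2^{3}$; hence $\Phi_3=v_2^{3}$ is a primitive element of $A$. Since $\K$ has characteristic zero, a nonzero primitive element of a Hopf algebra generates an infinite-dimensional polynomial subalgebra, which contradicts $\dim A=72$; therefore $\Phi_3=0$. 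The case $V\cong V_{3,5}$ is entirely parallel, with $\xi^{2}$ replaced by $\xi^{4}$ and the braiding, coactions and relations taken from the proposition describing $\BN(V_{3,5})$ and from the proof of the generation lemma.

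The step I expect to be the main obstacle is the vanishing of $\Phi_1$ and $\Phi_2$. In contrast with the one-dimensional situation of Lemma~\ref{lemOnedimNichDeforma} and with $\Phi_3$, these relations are only \emph{skew}-primitive, and they are \emph{coupled} to one another through their coactions, so neither the ``no nonzero primitives'' argument nor a one-variable divided-power argument applies directly. One is forced to descend the pair $(\Phi_1,\Phi_2)$ into $\C$ and to eliminate by hand the few skew-primitive candidates there against the rather intricate comultiplication of $\C$, crucially using that the relevant grouplikes $a$ and $a^{4}$ have order six and that the braiding acts by the eigenvalue $1$ on each $\Phi_i$.
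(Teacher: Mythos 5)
Your overall strategy is sound, and for the crucial step it is genuinely different from the paper's. The paper kills the quadratic relations on the \emph{algebra} side: it expands $x^2$ and $xy-\xi^2yx$ in the basis $\{a^i,ba^i,xa^i,xba^i,ya^i,yba^i\}_{i=0}^{5}$ of $A_{[1]}$ and uses the commutation relations $ax=-xa$, $bx=-xb$, $ay+\xi ya=\Lam^{-1}xba^3$, $by+\xi yb=xa^4$ (which hold exactly in $A$, not merely in $\text{gr}\,A$) to force every coefficient to vanish; the coproduct is needed only at the very end to exclude a scalar multiple of $1$. You instead work on the \emph{coalgebra} side, descending the coupled pair $(\Phi_1,\Phi_2)$ into $\C$ and then eliminating it against $\Delta_{\C}$. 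That route can be made to work: in $A_{[1]}/A_{[0]}\cong V\#\C$ the coaction of $V$ has ``grouplike parts'' $a^5,a^2$, which cannot match $a^4,a$, so $\Phi_1,\Phi_2\in\C$; and inside $\C$ the explicit coproducts of Proposition~\ref{proStrucOfC} together with the coupling between $\Phi_1$ and $\Phi_2$ kill all remaining candidates. What each approach buys: yours isolates the obstruction as a purely coalgebraic statement about $\C$, while the paper's avoids the descent into $\C\otimes\C$ entirely and is a routine linear-algebra check.

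That said, you have left precisely the step you call the main obstacle as an assertion, and your stated mechanism for it rests on a false premise. The elements $a$ and $a^4$ are \emph{not} grouplike in $\C$ (one has $G(\C)=\{1,a^3\}$, and $\De(a^4)=a^4\otimes a^4+\Lam^{-1}ba^3\otimes b$); the paper remarks on this expressly, because the non-grouplikeness is what rules out the standard lifting candidate $x^2=\mu(1-a^4)$. So ``matching the grouplike summands'' and ``crucially using that the grouplikes $a$ and $a^4$ have order six'' are not the right levers; the comparison must be run against the full, non-grouplike coproducts, and the self-braiding identities $c(\Phi_i\otimes\Phi_i)=\Phi_i\otimes\Phi_i$ play no role once $\Phi_i\in\C$. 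You also need to control the possible $\C\otimes\C$ corrections to $\Delta_A(v_i)$, which propagate into $\Delta_A(\Phi_i)$ and muddy the coefficient comparison; the paper's commutation argument is immune to this. A smaller slip: for $\Phi_3$ you quote the coaction $\delta(v_2^3)$, but primitivity of $v_2^3$ in $A$ requires the full coproduct $\Delta_A(v_2^3)$; its extra terms are multiples of $\Phi_1$, $\Phi_2$ and of the cubic bracket, so your conclusion stands once the quadratic relations are established, but the formula you cite is not the one that proves it.
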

\begin{proof}
Note that $\text{gr}\; A\cong \BN(V)\sharp \C$, where $V$ is isomorphic either to $V_{3,1}$ or $V_{3,5}$. In order to prove that $A\cong \BN(V)\sharp \C$, we need to show that the homogeneous relations in $\BN(V)$ also hold in $A$.
If $V=V_{3,1}$,  the bosonization $\BN(V_{3,1})\sharp \C$ is generated by $x, y, a, b$ satisfying the relations
\begin{align*}
a^6=1,\quad b^2=0,\quad ba=\xi ab,\quad ax=-xa,\quad bx=-xb,\\
ay+\xi ya=\Lam^{-1}xba^3,\quad by+\xi yb=xa^4,\quad x^2=0,\quad y^3=0,\quad xy-\xi^2yx=0.
\end{align*}
the coalgebra structure is given by
\begin{align*}
\De(a)&=a\otimes a+ \Lam^{-1}b\otimes ba^3,
\De(b)=b\otimes a^4+a\otimes b,\\
\De(x)&=x\otimes 1+a^5\otimes x+(\xi^4-\xi^2)ba^4\otimes y,\\
\De(y)&=y\otimes 1+a^2\otimes y+(1+\xi^4)ba\otimes x.
\end{align*}
We first calculate the following coproducts in $A$:
\begin{align*}
\Delta(xy-\xi^2yx)&=(xy-\xi^2yx)\otimes 1+a\otimes (xy-\xi^2yx)+(1-\xi)b\otimes x^2,\\
\Delta(x^2)&=x^2\otimes 1+a^4\otimes x^2+(1-\xi^2)ba^3\otimes (xy-\xi^2yx),\\
\Delta(y^3)&=y^3\otimes 1+1\otimes y^3+\xi^5\Lam^{-1}(xy-\xi^2yx)ba^4\otimes y+\Lam^{-1}\xi x^2a^2\otimes y \\&\quad +\Lam^{-1}x^2ba\otimes x+\xi^5(xy-\xi^2yx)a^5\otimes x-\xi xa\otimes (xy-\xi^2yx) \\&\quad
-xb\otimes x^2+ba^5\otimes (-yxy+\xi y^2x+\xi^5xy^2).
\end{align*}
Note that $a^4$, $a$ are not group-like elements.
If the relation $x^2=0$ in $\BN(V_{3,1})$ has non-trivial deformations, then $x^2\in A_{[1]}$ and they must be linear combinations of $\{a^i,ba^i, xa^i, ya^i,xba^i,yba^i\}_{i=0}^5$. That is, there exist some elements $\alpha_i,\beta_i,\gamma_i,\lambda_i,\mu_i,\nu_i\in\K$ for $i\in Z_6$ such that
\begin{align*}
x^2=\sum_{i=0}^5\alpha_i a^i+\beta_i ba^i+ \gamma_i xa^i+\lambda_i xba^i+\mu_i ya^i+\nu_i yba^i.
\end{align*}
And we have that
\begin{align*}
x^2a&=\sum_{i=0}^5\alpha_i a^{i+1}+\beta_i ba^{i+1}+ \gamma_i xa^{i+1}+\lambda_i xba^{i+1}+\mu_i ya^{i+1}+\nu_i yba^{i+1},\\
ax^2&=\sum_{i=0}^5\alpha_i a^{i+1}+\beta_i\xi^5 ba^i-\gamma_i xa^{i+1}-\lambda_i\xi^5 xba^{i+1}-\mu_i\xi ya^{i+1}
\\&\quad+\Lam^{-1}\mu_i xba^{3+i}-\nu_i yba^{i+1},
\end{align*}
since $ax=-xa$ and $ay+\xi ya=\Lam^{-1}xba^3$. Then from $ax^2=x^2a$, we have that for all $i\in Z_6$
\begin{align*}
\beta_i=\gamma_i=\lambda_i=\mu_i=\nu_i=0,\quad x^2=\sum_{i=0}^5\alpha_i a^i.
\end{align*}
By the relations $bx=-xb$ and $ba=\xi ab$, we have that $bx^2=x^2b$ and then $\alpha_i=0$ for all $1\leq i\leq 5$. However, from the coproduct of $x^2$, $x^2\neq 1$, thus the relation $x^2=0$ must hold in $A$.
Now we claim that $xy-\xi^2yx=0$ must hold in $A$. Indeed,
Since $xy-\xi^2yx\in A_{[1]}$,  there exist some elements $\alpha_i,\beta_i,\gamma_i,\lambda_i,\mu_i,\nu_i\in\K$ for $i\in Z_6$ such that
\begin{align*}
xy-\xi^2yx=\sum_{i=0}^5\alpha_i a^i+\beta_i ba^i+ \gamma_i xa^i+\lambda_i xba^i+\mu_i ya^i+\nu_i yba^i.
\end{align*}
From the relations $by+\xi yb=xa^4$ and $bx=-xb$, we have that
\begin{align*}
a(xy-\xi^2yx)&=-\Lam^{-1}\xi x^2ba^3+\xi(xy-\xi^2yx)a,\\
b(xy-\xi^2yx)&=-\xi x^2a^4+\xi(xy-\xi^2yx)a.
\end{align*}
Then by the fact that the relation $x^2=0$ must hold in $A$, we have that
\begin{align*}
a(xy-\xi^2yx)=\xi(xy-\xi^2yx)a,\quad
b(xy-\xi^2yx)=\xi(xy-\xi^2yx)b,
\end{align*}
which implies that for all $i\in Z_6$,
\begin{align*}
\alpha_i=\beta_i=\gamma_i=\lambda_i=\mu_i=\nu_i=0.
\end{align*}
Then $\Delta(y^3)=y^3\otimes 1+1\otimes y^3$ and the relation $y^3=0$ must hold in $A$ since $A$ cannot contain primitive elements. Thus $A\cong gr\,A$.\\

If $V=V_{3,5}$,  the bosonization $\BN(V_{3,5})\sharp \C$ is generated by $x, y, a, b$ satisfying the relations
\begin{align*}
a^6=1,\quad b^2=0,\quad ba=\xi ab,\quad ax=-xa,\quad bx=-xb,\\
ay+\xi ya=\Lam^{-1}xba^3,\quad by+\xi yb=xa^4,\quad x^2=0,\quad y^3=0,\quad xy-\xi^4yx=0,
\end{align*}
the coalgebra structure is given by
\begin{align*}
\De(a)&=a\otimes a+ \Lam^{-1}b\otimes ba^3,
\De(b)=b\otimes a^4+a\otimes b,\\
\De(x)&=x\otimes 1+a\otimes x+(\xi^4-1)b\otimes y,\\
\De(y)&=y\otimes 1+a^4\otimes y+(1+\xi^2)ba^3\otimes x.
\end{align*}
Then we have that
\begin{align*}
\Delta(xy-\xi^4yx)&=(xy-\xi^4yx)\otimes 1+a^5\otimes (xy-\xi^4yx)+\xi ba^4\otimes x^2,\\
\Delta(x^2)&=x^2\otimes 1+a^2\otimes x^2-(1+\xi^5)ba\otimes (xy-\xi^4yx),\\
\Delta(y^3)&=y^3\otimes 1+1\otimes y^3+\Lam^{-1}\xi^4(xy-\xi^4yx)b\otimes y+\Lam^{-1}x^2a^4\otimes y \\&\quad
             +(1+\xi^4)(xy-\xi^4yx)a\otimes x+(1+\xi^2)\Lam^{-1}x^2ba^3\otimes x\\&\quad
             +(1+\xi^2)ba^5\otimes(xy^2+\xi^2yxy+\xi^4y^2x)+xa^5\otimes (xy-\xi^4yx)\\&\quad
             +(1+\xi^2)xba^4\otimes x^2.
\end{align*}
Note that $a^2$, $a^5$ are not group-like elements. Similarly to the above proof, since $ax^2=x^2a$ and $bx^2=x^2b$, the relation $x^2=0$ must hold in $A$. After a direct computation, we have that $a(xy-\xi^4yx)=\xi(xy-\xi^4yx)a$ and $b(xy-\xi^4yx)=\xi(xy-\xi^4yx)b$, which imply the relation $xy-\xi^4yx=0$ in $A$. Then $\Delta(y^3)=y^3\otimes 1+1\otimes y^3$ and therefore the relation $y^3=0$ must hold in $A$. Thus $A\cong gr\,A$.
\end{proof}

\begin{lem}\label{lemTwodimNichDeforma2}
Let $A$ be a finite-dimensional Hopf algebra over $\C$ such that the infinitesimal braiding $V$ is isomorphic either to $V_{2,2}$ or $V_{2,4}$. Then $A\cong \BN(V)\sharp \C$.
\end{lem}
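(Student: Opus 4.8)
The plan is to follow the proof of Lemma~\ref{lemTwodimNichDeforma1} step by step. Since $gr\,A\cong\BN(V)\sharp\C$, it suffices to show that the defining relations of $\BN(V_{2,2})$ (resp.\ of $\BN(V_{2,4})$) hold on the nose in $A$: then the canonical epimorphism $\BN(V)\sharp\C\twoheadrightarrow A$ is an isomorphism by the dimension count $\dim A=72=\dim\bigl(\BN(V)\sharp\C\bigr)$. Fix lifts $x,y\in A_{[1]}$ of $v_1,v_2$. As in the $V_{3,1},V_{3,5}$ cases, the bosonization $\BN(V_{2,2})\sharp\C$ is generated by $a,b,x,y$ subject to the relations of $\C$, the module relations
\begin{align*}
ax=\xi^2xa,\quad bx=\xi^2xb,\quad ay+ya=\Lam^{-1}xba^3,\quad by+yb=xa^4,
\end{align*}
the Nichols relations $x^2-\xi^2y^2=0$, $xy-yx=0$, $x^3=0$, and the coalgebra structure coming from the coaction,
\begin{align*}
\De(x)=x\otimes1+a^4\otimes x+(1-\xi^2)ba^3\otimes y,\quad\De(y)=y\otimes1+a\otimes y+(1+\xi^4)b\otimes x,
\end{align*}
together with $\De(a),\De(b)$ as in Proposition~\ref{proStrucOfC}; for $V_{2,4}$ the coaction‑terms change accordingly and the first Nichols relation becomes $y^2+\xi x^2=0$, the remaining two relations being the same.

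Write $r_1:=x^2-\xi^2y^2$ and $r_2:=xy-yx$. First I would record, in $A$, the auxiliary identities (all obtained from the relations above together with $\xi^2-\xi+1=0$, $\xi^2\Lam^{-1}=\xi+1$, and the $\C$-identities $ba^3b=0$, $a^3x=xa^3$, $a^3y=-ya^3$):
\begin{align*}
a\,r_1=r_1a-\xi^2\Lam^{-1}r_2\,ba^3,\quad b\,r_1=r_1b-\xi^2r_2\,a^4,\quad a\,r_2=\xi^5r_2a,\quad b\,r_2=\xi^5r_2b,
\end{align*}
together with the coproducts
\begin{align*}
\De(r_1)=r_1\otimes1+a^2\otimes r_1+(\xi-1)ba\otimes r_2,\quad\De(r_2)=r_2\otimes1+a^5\otimes r_2+(2\xi-1)ba^4\otimes r_1,
\end{align*}
while, once $r_1=r_2=0$, one gets $\De(x^3)=x^3\otimes1+1\otimes x^3$ (this is the computation in the Proposition describing $\BN(V_{2,2})$). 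For $V_{2,4}$ one obtains the analogous identities with $\xi^2$ and $\xi^4$ interchanged at the appropriate places.

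Next comes the elimination. If the quadratic relations do not already hold in $A$, then $r_1,r_2\in A_{[1]}$, hence each is a $\K$-linear combination of $\{a^i,ba^i,xa^i,xba^i,ya^i,yba^i\mid i\in Z_6\}$. Comparing $a\,r_2=\xi^5r_2a$ against this basis forces $r_2=\sum_i\beta_i\,ba^i$; then $r_2\,ba^3=0$, so $a\,r_1=r_1a$, which forces $r_1=\sum_i\alpha_i\,a^i$. Feeding these back into the two coproduct identities and matching coefficients in $\C\otimes\C$ (via the explicit $\De_\C(a^i),\De_\C(ba^i)$ of Proposition~\ref{proStrucOfC}) kills $\alpha_i$ for $i\neq0,2$, kills $\beta_i$ for $i\neq4$, and yields $\alpha_0=-\alpha_2$; combining with $b\,r_1=r_1b-\xi^2r_2a^4$ pins down $\beta_4$ as a scalar multiple of $\alpha_2$, leaving a single free parameter. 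This last ambiguity is then removed by bringing in the cubic relation: expanding $x^3$ in two ways from $x^2=\xi^2y^2+\alpha_2(a^2-1)$ and $xy=yx+\beta_4ba^4$ and feeding the outcome back into the bialgebra axioms forces $\alpha_2=\beta_4=0$, since any nonzero value would produce relations incompatible with $\dim A=72=\dim gr\,A$. Hence $r_1=r_2=0$; with the two quadratic relations now valid, $\De(x^3)=x^3\otimes1+1\otimes x^3$, so $x^3$ is primitive, and as $A$ is finite dimensional it has no nonzero primitive element, whence $x^3=0$. Thus all relations of $\BN(V_{2,2})$ hold in $A$, and the case $V_{2,4}$ is handled identically.

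I expect the main obstacle to be exactly this coupling of the two quadratic relations, which is absent in Lemma~\ref{lemTwodimNichDeforma1}: there $xy-\xi^2yx$ (resp.\ $xy-\xi^4yx$) $\xi$-commutes with both $a$ and $b$, which by itself annihilates all of its coefficients, whereas here $r_2$ only $\xi^5$-commutes with $a$ and $b$ so that its $ba^i$-part survives the commutators; one is therefore forced to carry $r_1$ and $r_2$ along simultaneously and to invoke the cubic relation $x^3$ to finish. The bookkeeping with the coalgebra of $\C$ and the numerous scalar identities that make the correction terms collapse exactly onto multiples of $r_1,r_2$ (for instance $(\xi-1)x^2+\xi y^2=(\xi-1)r_1$ and $\xi^4+\xi^2\Lam^{-1}=1$) are routine but must be verified with care.
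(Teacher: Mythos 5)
Your overall strategy is the paper's: lift the Nichols relations into $A_{[1]}$, use the $a$- and $b$-commutators to restrict their possible values, and use the coproduct identities to finish. Your bookkeeping of the first-order constraints is in fact \emph{more} careful than the paper's: you correctly observe that $\De(r_2)$ contains the term $(2\xi-1)ba^4\otimes r_1$, so that matching against $\De_{\C}(ba^i)$ kills $\beta_i$ only for $i\neq 4$ and instead couples $\beta_4$ to $r_1$, leaving the one-parameter family $r_1=\alpha(a^2-1)$, $r_2=(2\xi-1)\alpha\, ba^4$. (The paper's proof asserts that all $\beta_i$ vanish directly from the coproducts of the $ba^i$ and only settles $r_1$ afterwards, using $r_2=0$; it is precisely the $i=4$ component that this order of argument glosses over.)

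The gap is in your last step. The claim that expanding $x^3$ in two ways ``forces $\alpha_2=\beta_4=0$, since any nonzero value would produce relations incompatible with $\dim A=72$'' is an appeal to the conclusion, not an argument. Worse, the natural candidates for the obstruction vanish identically on this family: with $x^2=\xi^2y^2+\alpha(a^2-1)$ and $xy=yx+(2\xi-1)\alpha\, ba^4$ one computes $ba^4y+yba^4=xa^2$, hence $xy^2-y^2x=(2\xi-1)\alpha\, xa^2$, and the associativity constraint $x\cdot x^2=x^2\cdot x$ reduces to $\xi^2(2\xi-1)=\xi^4-1$, which holds; likewise the combination $\xi xyx-x^2y+\xi^5yx^2$ appearing in the displayed formula for $\De(x^3)$ collapses to $0$ using $\xi+\xi^5=1$, $\xi^2+\xi^5=0$ and $\Lam^{-1}=1-2\xi$. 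So neither of the two mechanisms you invoke produces a contradiction, and the coproducts of $r_1$, $r_2$ and the commutators with $a$ and $b$ are all consistent with $\alpha\neq 0$ (e.g.\ $\xi^{-1}\Lam^{-1}=\xi^2(2\xi-1)$ makes $\De(r_1)$ match $\De_{\C}(\alpha(a^2-1))$ exactly). If the residual parameter is in fact obstructed, it must be detected by a computation you have not performed --- for instance the full expansion of $\De(x^3)$ or $\De(y^3)$ in $A$ including the correction terms proportional to $r_1,r_2$ that are absent from the bosonization formula. As written, the proposal does not rule out $\alpha\neq 0$ and therefore does not prove the lemma.
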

\begin{proof}
Note that $\text{gr}\; A\cong \BN(V)\sharp \C$, where $V$ is isomorphic either to $V_{2,2}$ or $V_{2,4}$
if $V=V_{2,2}$,  the bosonization $\BN(V_{2,2})\sharp \C$ is generated by $x, y, a, b$ satisfying the relations
\begin{align*}
a^6=1,\quad b^2=0,\quad ba=\xi ab,\quad ax=\xi^2xa,\quad bx=\xi^2xb,\\
ay+ya=\Lam^{-1}xba^3,\quad by+yb=xa^4,\quad x^2-\xi^2y^2=0,\quad x^3=0,\quad xy-yx=0,
\end{align*}
the coalgebra structure is given by
\begin{align*}
\De(a)&=a\otimes a+ \Lam^{-1}b\otimes ba^3,\quad
\De(b)=b\otimes a^4+a\otimes b,\\
\De(x)&=x\otimes 1+a^4\otimes x+(1-\xi^2)ba^3\otimes y,\\
\De(y)&=y\otimes 1+a\otimes y+(1+\xi^4)b\otimes x.
\end{align*}
Assume that $A$ is a finite-dimensional Hopf algebra such that $gr\,A\cong \BN(V_{2,2})\sharp \C$. We first calculate the following coproducts:
\begin{gather*}
\Delta(x^2-\xi^2y^2)=(x^2-\xi^2y^2)\otimes 1+a^2\otimes (x^2-\xi^2y^2)+\xi^2ba\otimes (xy-yx),\\
\Delta(xy-yx)=(xy-yx)\otimes 1+a^5\otimes(xy-yx)+(1{-}\xi)(\xi^2{-}1)ba^4\otimes (x^2{-}\xi^2y^2),\\
\Delta(x^3)=x^3\otimes 1+1\otimes x^3+(1+\xi)ba^5\otimes (\xi xyx-x^2y+\xi^5 yx^2).
\end{gather*}
Note that $a^2$, $a^5$ are not group-like elements and $ba=\xi ab$. Since $xy-yx\in A_{[1]}$, there exist some elements $\alpha_i,\beta_i,\gamma_i,\lambda_i,\mu_i,\nu_i\in\K$ such that
\begin{align*}
xy-yx=\sum_{i=0}^5\alpha_i a^i+\beta_i ba^i+ \gamma_i xa^i+\lambda_i xba^i+\mu_i ya^i+\nu_i yba^i
\end{align*}
Since $ax=\xi^2xa, ay+ya=\Lam^{-1}xa^4$, we have that $a(xy-yx)=\xi^5(xy-yx)a$, which implies that
\begin{align*}
\alpha_i=\gamma_i=\lambda_i=\mu_i=\nu_i=0,\quad xy-yx=\sum_{i=0}^5\beta_i ba^i.
\end{align*}
However, by the coproducts of $ba^i$ for all $i\in Z_6$ given in Proposition $\ref{proStrucOfC}$ and after a direct computation, we have that $\beta_i=0$ for all $i\in Z_6$.
Then the relation $xy-yx=0$ holds in $A$ and therefore $\Delta(x^3)=x^3\otimes 1 +1\otimes x^3$, which implies $x^3=0$ in $A$.
Similarly, we have that $x^2-\xi^2y^2=0$ must hold in $A$ since
\begin{align*}
a(x^2-\xi^2y^2)=(x^2-\xi^2y^2)a-\xi^2\Lam^{-1}(xy-yx)ba^3=(x^2-\xi^2y^2)a,\\
b(x^2-\xi^2y^2)=(x^2-\xi^2y^2)b-\xi^2(xy-yx)a^4=(x^2-\xi^2y^2)b.
\end{align*}

If $V=V_{2,4}$,  the bosonization $\BN(V_{2,4})\sharp \C$ is generated by $x, y, a, b$ satisfying the relations
\begin{align*}
a^6=1,\quad b^2=0,\quad ba=\xi ab,\quad ax=\xi^2xa,\quad bx=\xi^2xb,\\
ay+ya=\Lam^{-1}xba^3,\quad by+yb=xa^4, y^2+\xi x^2=0, x^3=0, xy-yx=0.
\end{align*}
the coalgebra structure is given by
\begin{align*}
\De(a)&=a\otimes a+ \Lam^{-1}b\otimes ba^3,
\De(b)=b\otimes a^4+a\otimes b,\\
\De(x)&=x\otimes 1+a^2\otimes x+(1+\xi)ba\otimes y,\\
\De(y)&=y\otimes 1+a^5\otimes y+(\xi^2+\xi^4)ba^4\otimes x.
\end{align*}
Following the lifting method in $\cite{AS98}$, we first calculate the following coproducts:
\begin{align*}
\Delta(y^2+\xi x^2)&=(y^2+\xi x^2)\otimes 1+a^4\otimes (y^2+\xi x^2)+\xi^5ba^3\otimes (xy-yx),\\
\Delta(xy-yx)&=(xy-yx)\otimes 1+a\otimes(xy-yx)+(1+\xi^5)b\otimes (y^2+\xi x^2),\\
\Delta(x^3)&=x^3\otimes 1+1\otimes x^3+(1+\xi)ba^5\otimes (yx^2+\xi^2 x^2y+\xi^4 xyx).
\end{align*}
Similarly to the above proof, after a direct calculation, we have that $a(xy-yx)=\xi^5(xy-yx)a$ and, then we have that $xy-yx=\sum_{i=0}^5\beta_i ba^i$. However by the coproducts of $ba^i$ for all $i\in Z_6$ given in Proposition $\ref{proStrucOfC}$ and after a direct computation, we have that $\beta_i=0$ for all $i\in Z_6$. Thus the relation $xy-yx=0$ must hold in $A$ and therefore $\Delta(x^3)=x^3\otimes 1+1\otimes x^3$, which implies $x^3=0$ hold in $A$. Similarly, after a direct computation, we have that $a(x^2+\xi y^2)=(x^2+\xi y^2)a$ and $b(x^2+\xi y^2)=(x^2+\xi y^2)b$, which implies that $x^2+\xi y^2=0$ must hold in $A$.  Thus $A\cong gr\,A$.
\end{proof}

Finally, we end up this section with the following
\begin{thm}
Let $A$ be a finite-dimensional Hopf algebra such that $A_{[0]}=\C$ and its infinitesimal braiding $V$ is isomorphic either to $\K_{\chi^{k}}$ with $k\in\{1,3,5\}$, $V_{3,1}$, $V_{3,5}$, $V_{2,2}$ or $V_{2,4}$. Then $A$ is isomorphic either to
\begin{itemize}
  \item $\bigwedge\K_{\chi^{k}}\sharp \C$, for $k\in\{1,3,5\}$;
  \item $\BN(V_{3,1})\sharp\C$;
  \item $\BN(V_{3,5})\sharp\C$;
  \item $\BN(V_{2,2})\sharp\C$;
  \item $\BN(V_{2,4})\sharp\C$.
\end{itemize}
Moreover, the Hopf algebras $\bigwedge\K_{\chi^{k}}\sharp \C$
without the Chevalley property have dimension $24$, and Hopf
algebras without the Chevalley property given by
$\BN(V_{3,1})\sharp\C$, $\BN(V_{3,5})\sharp\C$,
$\BN(V_{2,2})\sharp\C$, $\BN(V_{2,4})\sharp\C$ have dimension $72$.
\end{thm}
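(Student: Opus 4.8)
The plan is to combine the three previous lemmas, which handle the deformations case by case, with the structure-theoretic reduction already established (that $\operatorname{gr}A\cong\BN(V)\sharp\C$) to pin down $A$ completely and then read off the dimensions. The argument splits naturally according to the isomorphism class of the infinitesimal braiding $V$.

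First I would invoke the lemma showing that any such $A$ is generated in degree one with respect to the standard filtration, i.e.\ $\operatorname{gr}A\cong\BN(V)\sharp\C$ with $V$ the infinitesimal braiding. This reduces the classification to determining whether the bosonization $\BN(V)\sharp\C$ admits non-trivial lifts. Then I would apply Lemma~\ref{lemOnedimNichDeforma} for the case $V\cong\K_{\chi^{k}}$ with $k\in\{1,3,5\}$, which gives $A\cong\bigwedge\K_{\chi^{k}}\sharp\C$; Lemma~\ref{lemTwodimNichDeforma1} for $V\cong V_{3,1}$ or $V_{3,5}$, giving $A\cong\BN(V)\sharp\C$; and Lemma~\ref{lemTwodimNichDeforma2} for $V\cong V_{2,2}$ or $V_{2,4}$, again giving $A\cong\BN(V)\sharp\C$. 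In each case the content of the cited lemma is precisely that the defining relations of $\BN(V)$ cannot be deformed, so $A\cong\operatorname{gr}A$, and the list of five families follows.

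For the dimension count I would use that bosonization multiplies dimensions: $\dim(\BN(V)\sharp\C)=\dim\BN(V)\cdot\dim\C$. Since $\dim\C=12$, and $\dim\bigwedge\K_{\chi^{k}}=2$ by Lemma~\ref{lemNicholsgeneratedbyone}, the first family has dimension $24$; since $\dim\BN(V_{3,1})=\dim\BN(V_{3,5})=\dim\BN(V_{2,2})=\dim\BN(V_{2,4})=6$ by Propositions~\ref{proV31} and the three that follow it, the remaining four families have dimension $72$. The failure of the Chevalley property is inherited from $\C=A_{[0]}$: since $\C$ does not have the Chevalley property (its coradical $\C_0$ is not a subalgebra), and $\C$ embeds in $A$ as $A_{[0]}$ with $A_0=\C_0$, the coradical of $A$ is not a subalgebra either.

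The only genuine work has already been absorbed into the preceding lemmas, so the main obstacle is not in this theorem but upstream: verifying, in Lemmas~\ref{lemTwodimNichDeforma1} and~\ref{lemTwodimNichDeforma2}, that the quadratic relations (such as $x^2=0$ or $xy-\xi^2yx=0$) admit no deformation inside $A_{[1]}$, which requires the explicit coproduct formulas for the basis $\{a^i,ba^i\}$ of $\C$ from Proposition~\ref{proStrucOfC} together with the commutation relations forcing all deformation coefficients to vanish. Granting those lemmas, the proof here is a short bookkeeping argument assembling the cases and the dimension formula.
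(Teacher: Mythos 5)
Your proposal is correct and follows exactly the route the paper intends: the theorem is a direct assembly of the generation-in-degree-one lemma with Lemmas \ref{lemOnedimNichDeforma}, \ref{lemTwodimNichDeforma1} and \ref{lemTwodimNichDeforma2}, plus the dimension formula $\dim(\BN(V)\sharp\C)=\dim\BN(V)\cdot\dim\C$ and the observation that the Chevalley property fails because $A_0=\C_0$ is not a subalgebra. Nothing is missing.
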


\vskip10pt \centerline{\bf ACKNOWLEDGMENT}

\vskip10pt The paper is supported by the NSFC (Grant No. 11271131).  
N. Hu is indebted to Yinhuo Zhang so much for kindly inviting him to visit 
University of Hasselt and University of Antwerp, Belgium, with the support from these Universities during Dec. 5 to Dec. 18, 2016. Many thanks go to Profs. Yinhuo Zhang and Fred. van Oystaeyen, as well as their colleagues for their hospitalities.

\end{document}